\documentclass{amsart}

\usepackage{amssymb,amsmath}


\newcommand{\Tm}{\sqrt{-\Delta + m^2} \, }
\newcommand{\Tmc}{\sqrt{-c^2 \Delta + m^2 c^4} \, }
\newcommand{\Tmcn}{\sqrt{-c_n^2 \Delta + m^2 c_n^4} \, }
\newcommand{\RR}{\mathbf{R}}
\newcommand{\CC}{\mathbf{C}}
\newcommand{\Hhalf}{H^{1/2}(\RR^3)}
\newcommand{\Honer}{H^{1}_{\mathrm{r}}(\RR^3)}
\renewcommand{\to}{\rightarrow}
\newcommand{\weakto}{\rightharpoonup}

\newcommand{\Res}{\mathcal{R}}

\newcommand{\En}{\mathcal{E}}
\newcommand{\Enc}{\mathcal{E}_{c}}
\newcommand{\Encn}{\mathcal{E}_{c_n}}
\newcommand{\Enr}{\mathcal{E}_{\mathrm{nr}}}

\newcommand{\Hell}{\mathcal{H}_{(\ell)}}

\newcommand{\Ncrit}{N_*}
\newcommand{\Qc}{Q_c}
\newcommand{\Qcn}{Q_{c_n}}
\newcommand{\Qinf}{Q_\infty}

\newcommand{\ie}{i.\,e.}

\newtheorem{theorem}{Theorem}
\newtheorem{lemma}{Lemma}
\newtheorem{proposition}{Proposition}

\newtheorem*{remark*}{Remark}
\newtheorem*{remarks*}{Remarks}

\numberwithin{equation}{section}

\begin{document}

\title[Uniqueness of Ground States for Pseudo-Rel.~Hartree Equations]{Uniqueness of Ground States for Pseudo-Relativistic Hartree Equations}

\author{Enno Lenzmann}
\thanks{Partly supported by NSF Grant DMS--0702492.}
\date{September 15, 2008}
\email{lenzmann@math.mit.edu}
\address{Massachusetts Institute of Technology, Department of Mathematics, Room 2-230, Cambridge, MA 02139, USA.}

\maketitle

\begin{abstract}
We prove uniqueness of ground states $Q \in H^{1/2}(\RR^3)$ for the pseudo-relativistic Hartree equation,
\[
\sqrt{-\Delta + m^2} \, Q - \big ( |x|^{-1} \ast |Q|^2 \big ) Q = -\mu Q ,
\]
in the regime of $Q$ with sufficiently small $L^2$-mass. This result shows that a uniqueness conjecture by Lieb and Yau in [CMP \textbf{112} (1987), 147--174] holds true at least for $N=\int |Q|^2 \ll 1$ except for at most countably many $N$.

Our proof combines variational arguments with a nonrelativistic limit, which leads to a certain Hartree-type equation (also known as the Choquard-Pekard or Schr\"odinger-Newton equation). Uniqueness of ground states for this limiting Hartree equation is well-known. Here, as a key ingredient, we prove the so-called nondegeneracy of its linearization. This nondegeneracy result is also of independent interest, for it proves a key spectral assumption in a series of papers on effective solitary wave motion and classical limits for nonrelativistic Hartree equations. 
\end{abstract}

\section{Introduction}

The pseudo-relativistic Hartree energy functional (in appropriate units)
\begin{equation} \label{eq:En}
\En(\psi) = \int_{\RR^3} \overline{\psi} \Tm \psi - \frac{1}{2} \int_{\RR^3} \big (|x|^{-1} \ast |\psi|^2 \big ) |\psi|^2 
\end{equation}
arises in the mean-field limit of a quantum system describing many self-gravitating, relativistic bosons with rest mass $m >0$. Such a physical system is often referred to as a {\em boson star,} and various models for these -- at least theoretical -- objects have attracted a great deal of attention in theoretical and numerical astrophysics over the past years.  

In order to gain some rigorous insight into the theory of boson stars, it is of particular interest to study ground states (i.\,e.~minimizers) for the variational problem 
\begin{equation} \label{eq:var}
E(N) = \inf \Big \{ \En(\psi ) : \mbox{$\psi \in H^{1/2}(\RR^3)$ and $\displaystyle \int_{\RR^3} |\psi|^2 = N$} \Big \} ,
\end{equation}
where the parameter $N >0$ plays the role of the stellar mass. Provided that problem (\ref{eq:var}) has indeed a ground state $Q \in H^{1/2}(\RR^3)$, one readily finds that it satisfies the {\em pseudo-relativistic Hartree equation,}
\begin{equation} \label{eq:bs}
\Tm Q - \big (|x|^{-1} \ast |Q|^2 ) Q = -Ê\mu Q,
\end{equation} 
with $\mu = \mu(Q) \in \RR$ being some Lagrange multiplier. 

\medskip
In fact, the existence of symmetric-decreasing ground states $Q=Q^*(|x|) \geq 0$ minimizing (\ref{eq:var}) was first proven by Lieb and Yau in \cite{LiebYau1987}, where the authors also conjectured that uniqueness result holds true in the following sense. For each $N >0$, the variational problem (\ref{eq:var}) has at most one symmetric-decreasing ground state. If true, this result further implies, by strict rearrangement inequalities, that we have indeed uniqueness of all the ground states of (\ref{eq:var}) for each $N >0$, up to phase and translation. 
 
\medskip
However, the nonlocality of $\sqrt{-\Delta + m^2}$ as well as the convolution-type nonlinearity both complicate the analysis of the pseudo-relativistic Hartree equation (\ref{eq:bs}) in a substantial way. In particular,  the set of its radial solutions is not amenable to ODE techniques (e.\,g., shooting arguments and comparison principles) which are key arguments for proving uniqueness of ground states for nonlinear Schr\"odinger equations (NLS) with local nonlinearities; see \cite{Kwong1989,McLeod1993,McLeodSerrin1987,PeletierSerrin1983}. 

A further complication in the analysis of equation (\ref{eq:bs}) stems from the fact that there are no simple scaling arguments that relate ground states with different $N$, due to the presence of $m >0$. Indeed, this lack of a simple scaling mechanism is essential for the existence of a critical stellar mass $\Ncrit > 0$; see Theorem \ref{th:intro} below.

\medskip
As a first step towards proving uniqueness of ground states for (\ref{eq:var}), we present Theorem \ref{th:unique} below, which shows that ground states for problem are indeed unique (modulo translation and phase) for all sufficiently small $N > 0$ except for at most countably many. Our proof uses variational arguments combined with a nonrelativistic limit, leading to the nonlinear Hartree equation (also called Choquard-Pekar or Schr\"odinger-Newton equation) given by
\begin{equation} \label{eq:liebintro}
-\frac{1}{2m} \Delta \Qinf - \big ( |x|^{-1} \ast |\Qinf|^2 \big ) \Qinf = - \lambda \Qinf .
\end{equation}  
It is known this equation has a unique radial, positive solution $\Qinf \in H^1(\RR^3)$ for $\lambda > 0$ given; see \cite{Lieb1977} and Appendix A. 

In the present paper, we prove (as a key ingredient) that $\Qinf \in H^1(\RR^3)$ has a {\em nondegenerate linearization.} By this we mean that the linearization of (\ref{eq:liebintro}) around $\Qinf$ has a nullspace that is entirely due to the equation's invariance under phase and translation transformation; see Theorem \ref{th:nondegnr} below and its remarks for a precise statement. In particular, we show that the linear operator $L_+$ given by
\begin{equation} \label{eq:LLL}
L_+ \xi = -\frac{1}{2m} \Delta \xi + \lambda \xi - \big (|x|^{-1} \ast |\Qinf|^2 \big ) \xi - 2 \Qinf \big ( |x|^{-1} \ast (\Qinf \xi) \big )
\end{equation}
satisfies
\begin{equation} \label{eq:nondeg_LL}
\mathrm{ker} \, L_+ = \mathrm{span} \, \{ \partial_{x_1} \Qinf, \partial_{x_2} \Qinf, \partial_{x_3} \Qinf \} .
\end{equation}
Furthermore, by a perturbation argument, we conclude an analogous nondegeneracy result for ground states of the pseudo-relativistic Hartree equation (\ref{eq:bs}) with sufficiently small $L^2$-mass; see Theorem \ref{th:nondeg} below.

\medskip
In addition to being a mere technical key fact proven in this paper, the nondegeneracy result for equation (\ref{eq:liebintro}) is also of independent interest. For example, it proves a key spectral assumption in a series of papers on effective solitary wave motion and classical limits for Hartree equations; see \cite{FTY2002,JFGS2004,JFGS2006,Walid2007} and also the remark following Theorem \ref{th:nondegnr}. Another very recent application of the nondegeneracy result (\ref{eq:nondeg_LL}) is presented in \cite{KriegerMartelRaphael2008}, where two soliton solutions to the time-dependent version of equation (\ref{eq:liebintro}) are constructed.

\medskip
In the context of ground states for NLS with {\em local nonlinearities,} the nondegeneracy of linearizations is a well-known fact (see \cite{CGNT2007,Weinstein1985}) and it plays a central role in the stability analysis of solitary waves for NLS. However, the arguments for NLS with local nonlinearities make use of Sturm-Liouville theory, which, by contrast, is not applicable to $L_+$ given by (\ref{eq:LLL}) due to its nonlocal character. For more details, we refer to Section \ref{sec:proof:th:nondeg} below.

\medskip
Apart from their minimizing property, the ground states for (\ref{eq:var}) also play an important role for the {\em time-dependent pseudo-relativistic Hartree equation,}
\begin{equation} \label{eq:thartree}
i \partial_t \psi = \Tm \psi - \big ( |x|^{-1} \ast |\psi|^2 \big ) \psi,
\end{equation}
with the wave field $\psi : [0,T) \times \RR^3 \to \CC$. Clearly, equation (\ref{eq:thartree}) has solitary wave solutions
\begin{equation}
\psi(t,x) = e^{it \mu} Q(x) ,
\end{equation} 
whenever $Q \in H^{1/2}(\RR^3)$ is a nontrivial solution to equation (\ref{eq:bs}). Let us also mention that the dispersive nonlinear PDE (\ref{eq:thartree}) exhibits a rich variety of phenomena, such as stable and unstable traveling solitary waves, as well as finite-time blowup solutions indicating the ``gravitational collapse'' of a boson star; see \cite{FJLCMP2007, FJLNonlinearity2007, FroehlichLenzmannCPAM2007}. For well-posedness results concerning equation (\ref{eq:thartree}) and its rigorous derivation from many-body quantum mechanics, we refer to \cite{ChoOzawa2006,LenzmannWP2007} and \cite{ElgartSchlein2007}, respectively.

\medskip
For the reader's convenience, we conclude our introduction by summarizing the existence result about ground states for problem (\ref{eq:var}) along with a list of their basic properties. 

\begin{theorem}{\bf (Existence and Properties of Ground States.)}   \label{th:intro}
Suppose that $m > 0$ holds in (\ref{eq:En}). Then there exists a universal constant $\Ncrit > 4/\pi$ (independent of $m$) such that the following holds. 

\begin{enumerate}
\item[(i)] {\bf Existence:} There exists a ground state $Q \in H^{1/2}(\RR^3)$ for problem (\ref{eq:var}) if and only if 
\[
0 < N < \Ncrit .
\]
Moreover, the function $Q$ satisfies the pseudo-relativistic Hartree equation (\ref{eq:bs}) in the sense of distributions with some Lagrange multiplier $\mu \in \RR$.

\item[(ii)] {\bf Smoothness and Exponential Decay:} Any ground state $Q$ belongs to $H^s(\RR^d)$ for all $s \geq 0$ and $e^{+\delta |x|} Q \in L^\infty(\RR^3)$ for some $\delta = \delta(Q) > 0$.

\item[(iii)] {\bf Radiality and Strict Positivity:} Any ground state $Q$ is equal to its spherical-symmetric rearrangement $Q^*(|x|)$ up to phase and translation. Moreover, we have $Q^*(|x|) > 0$ for all $x \in \RR^3$.
\end{enumerate}
\end{theorem}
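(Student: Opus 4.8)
The plan is to establish parts (i), (iii), (ii) in that order, since positivity of a ground state and the location $\mu > -m$ of the Lagrange multiplier both enter the analysis in (ii). Throughout I abbreviate $\langle\psi,\Tm\psi\rangle = \int_{\RR^3}\overline{\psi}\,\Tm\psi$ and $D(\psi) = \iint_{\RR^3\times\RR^3}|x-y|^{-1}|\psi(x)|^2|\psi(y)|^2\,dx\,dy$, so that $\En(\psi) = \langle\psi,\Tm\psi\rangle - \tfrac12 D(\psi)$.

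\emph{Part (i).} The engine is the scale-invariant inequality
\[
D(\psi)\;\le\;\frac{2}{\Ncrit}\,\|\psi\|_{L^2(\RR^3)}^2\,\langle\psi,\sqrt{-\Delta}\,\psi\rangle ,\qquad \psi\in\Hhalf ,
\]
where I take $\Ncrit$ to be the optimal (hence manifestly $m$-independent) constant. Integrating the sharp Herbst--Kato bound $\langle\phi,|x|^{-1}\phi\rangle\le\tfrac{\pi}{2}\langle\phi,\sqrt{-\Delta}\,\phi\rangle$ in one of the two variables gives $D(\psi)\le\tfrac{\pi}{2}\|\psi\|_{L^2(\RR^3)}^2\langle\psi,\sqrt{-\Delta}\,\psi\rangle$; since equality in the Herbst--Kato bound is never attained within $\Hhalf$, whereas $2/\Ncrit$ \emph{is} attained (by the massless ground state $Q_0$), this forces $2/\Ncrit < \pi/2$, i.e.\ $\Ncrit > 4/\pi$. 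Because $\Tm \ge \sqrt{-\Delta}$, the inequality yields $\En(\psi) \ge (1 - N/\Ncrit)\langle\psi,\sqrt{-\Delta}\,\psi\rangle \ge 0$ whenever $\|\psi\|_{L^2(\RR^3)}^2 = N \le \Ncrit$, so $E(N)$ is finite and minimizing sequences are bounded in $\Hhalf$ for such $N$. For $0 < N < \Ncrit$ I would run Lions' concentration--compactness on the densities $|\psi_n|^2$: vanishing is ruled out because it would force $D(\psi_n) \to 0$ and hence $\liminf\En(\psi_n) \ge mN$, whereas a dilate $\psi_\lambda(x) = \lambda^{3/2}\psi(\lambda x)$ of a fixed smooth compactly supported $\psi$ obeys $\En(\psi_\lambda) < mN$ for small $\lambda > 0$, so that $E(N) < mN$; dichotomy is ruled out by the strict binding inequality $E(N) < E(\alpha) + E(N-\alpha)$ for $0 < \alpha < N$, which holds because the attractive interaction strictly lowers the energy of two widely separated lumps (this strict subadditivity being the delicate point, carried out in \cite{LiebYau1987}). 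The surviving case gives $\psi_n \to Q$ strongly in $L^2(\RR^3)$ modulo translations, and weak lower semicontinuity of $\langle\psi,\Tm\psi\rangle$ together with continuity of $D$ along $\Hhalf$-bounded, $L^2$-convergent sequences yields $\En(Q) \le E(N)$ with $\|Q\|_{L^2(\RR^3)}^2 = N$; hence $Q$ is a ground state and, by a standard argument, solves \eqref{eq:bs} with some $\mu \in \RR$. For $N > \Ncrit$, scale invariance of the quotient $D(\psi)/\big(\|\psi\|_{L^2(\RR^3)}^2\langle\psi,\sqrt{-\Delta}\,\psi\rangle\big)$ furnishes $\psi$ with $\|\psi\|_{L^2(\RR^3)}^2 = N$ and $\tfrac12 D(\psi) > \langle\psi,\sqrt{-\Delta}\,\psi\rangle$, whence $\En(\psi_\lambda) \le \lambda\big(\langle\psi,\sqrt{-\Delta}\,\psi\rangle - \tfrac12 D(\psi)\big) + mN \to -\infty$ as $\lambda \to \infty$, so $E(N) = -\infty$; for $N = \Ncrit$ one has $E(\Ncrit) = 0$ (dilate $Q_0$ and use dominated convergence), yet a minimizer would satisfy $\En(Q) = \langle Q,\Tm Q\rangle - \tfrac12 D(Q) > \langle Q,\sqrt{-\Delta}\,Q\rangle - \tfrac12 D(Q) \ge 0$, a contradiction.

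\emph{Part (iii).} Passing to $|\psi|$ and to the symmetric-decreasing rearrangement $\psi^*$ does not increase $\En$: the term $D$ depends only on $|\psi|$, and $D(|\psi|) \le D(\psi^*)$ by the Riesz rearrangement inequality, while writing $\Tm = m + \int_0^\infty (1 - e^{t\Delta})\,d\nu(t)$ via subordination and using positivity and symmetric decrease of the heat kernel gives $\langle|\psi|,\Tm|\psi|\rangle \le \langle\psi,\Tm\psi\rangle$ as well as $\langle\psi^*,\Tm\psi^*\rangle \le \langle\psi,\Tm\psi\rangle$. Thus $\En(Q^*) \le \En(|Q|) \le \En(Q) = E(N)$ forces equality throughout, and the characterization of the equality cases in the Riesz inequality (Lieb) pins down $Q = e^{i\theta}Q^*(\cdot - x_0)$. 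For strict positivity, note first that $E(N) < mN$ together with $\En(Q) = \tfrac12\langle Q,\Tm Q\rangle - \tfrac{\mu}{2}N$ (read off from \eqref{eq:bs}) and $\langle Q,\Tm Q\rangle \ge mN$ forces $\mu > -m$; then $(\Tm + \mu)^{-1}$ is positivity improving — its kernel is the nonnegative-measure subordinate of the strictly positive kernel of $e^{-t\Tm}$ — so rewriting \eqref{eq:bs} as $Q = (\Tm + \mu)^{-1}\big[(|x|^{-1}\ast|Q|^2)\,Q\big]$, with $|x|^{-1}\ast|Q|^2 > 0$ everywhere and $Q \ge 0$, $Q \not\equiv 0$, gives $Q > 0$ on all of $\RR^3$, and in particular $Q^* > 0$ everywhere.

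\emph{Part (ii).} Smoothness is a bootstrap on \eqref{eq:bs}. Since $\Hhalf \subset L^3(\RR^3)$ we have $|Q|^2 \in L^1(\RR^3)\cap L^3(\RR^3)$, and splitting $|x|^{-1}$ into near- and far-field pieces and applying Young's and the Hardy--Littlewood--Sobolev inequalities shows that $V := |x|^{-1}\ast|Q|^2$ and $\nabla V$ are bounded, so $V \in W^{1,\infty}(\RR^3)$ and $VQ \in H^{1/2}(\RR^3)$. Since $\mu > -m$ lies in the resolvent set of $\Tm$, the resolvent $(\Tm + \mu)^{-1}$ maps $H^s(\RR^3)$ into $H^{s+1}(\RR^3)$; hence $Q = (\Tm + \mu)^{-1}(VQ) \in H^{3/2}(\RR^3)$, and iterating — each regularity gain for $Q$ improving that for $V$ — yields $Q \in H^s(\RR^3)$ for every $s \ge 0$, so $Q \in C^\infty$. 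For exponential decay the key point is that for $\mu \in (-m,\infty)$ the kernel $G$ of $(\Tm + \mu)^{-1}$ obeys $G(x) \less e^{-\delta|x|}$ for some $\delta = \delta(m,\mu) > 0$: deform the Fourier integral into a strip $|\mathrm{Im}\,\xi| < \delta$ inside which the symbol $(\sqrt{|\xi|^2 + m^2} + \mu)^{-1}$ remains analytic (for $\delta$ small, the branch points $\pm i m$ and the zeros of $\sqrt{\xi^2 + m^2} + \mu$ stay outside the strip). Since $V(x) \to 0$ as $|x| \to \infty$, applying this to the pointwise bound $|Q(x)| \le \int_{\RR^3} G(x-y)\,V(y)\,|Q(y)|\,dy$ and splitting the integral at $|y| = R$, with $R$ so large that $\|V\mathbf{1}_{\{|y| > R\}}\|_{L^\infty}$ is small, yields $e^{\delta|x|}Q \in L^\infty(\RR^3)$ for $\delta$ small after absorbing the tail. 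I expect the main obstacles to be the sharp threshold analysis in (i) — that $\Ncrit$ is attained and strictly larger than the Herbst--Kato value $4/\pi$, and the strict binding inequality excluding dichotomy — and this exponentially decaying kernel bound for $(\Tm + \mu)^{-1}$, where the nonlocality of $\Tm$ makes the argument depart markedly from the Schr\"odinger case.
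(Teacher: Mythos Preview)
The paper does not prove this theorem at all: the remark immediately following the statement reads ``For the proof of (i) and (ii)/(iii), we refer to \cite{LiebYau1987} and \cite{FJLCMP2007,LenzmannDiss2006}, respectively.'' Theorem~\ref{th:intro} is presented purely as a summary of known results for the reader's convenience, so there is no in-paper proof to compare your proposal against.

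That said, your sketch is a faithful outline of what those references do. Part (i) is indeed driven by the optimal constant in the interpolation inequality $D(\psi)\le (2/\Ncrit)\|\psi\|_{L^2}^2\langle\psi,\sqrt{-\Delta}\,\psi\rangle$, and the strict-binding step excluding dichotomy is exactly the delicate point handled in \cite{LiebYau1987}; your observation that non-attainment of the Herbst--Kato constant forces $\Ncrit>4/\pi$ is correct. In part (iii) the rearrangement argument via subordination of $\Tm$ and Riesz's inequality, followed by positivity-improving of $(\Tm+\mu)^{-1}$ once $\mu>-m$ is secured, matches the approach in \cite{FJLCMP2007,LenzmannDiss2006}. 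In part (ii) the bootstrap using $(\Tm+\mu)^{-1}:H^s\to H^{s+1}$ and the exponential kernel bound obtained by shifting the Fourier contour into the analyticity strip of $(\sqrt{\xi^2+m^2}+\mu)^{-1}$ is again the standard route. Your identification of the two genuine difficulties --- the sharp threshold analysis and the nonlocal resolvent decay --- is accurate; the rest is routine.
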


\begin{remark*}
{\em  For the proof of (i) and (ii)/(iii), we refer to \cite{LiebYau1987} and \cite{FJLCMP2007,LenzmannDiss2006}, respectively. In physical terms, the constant $\Ncrit > 0$ can be regarded as the {\em ``Chandrasekhar limit mass''} of a pseudo-relativistic boson star.

}
\end{remark*}

\section{Main Results}

We now state our first main result concerning the uniqueness of ground states for the pseudo-relativistic Hartree equation (\ref{eq:bs}).

\begin{theorem} {\bf (Uniqueness of Ground States for $N \ll 1$.)} \label{th:unique}
Assume that $m > 0$ holds in (\ref{eq:En}). Then, for $0 < N \ll 1$, we have uniqueness of ground states for problem (\ref{eq:var}) up to phase and translation whenever $E'(N)$ exists. In particular, the symmetric-decreasing ground state $Q=Q^* \in H^{1/2}(\RR^3)$ minimizing (\ref{eq:var}) is unique for such $N > 0$.
\end{theorem}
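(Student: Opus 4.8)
The plan is to prove uniqueness via a contradiction/compactness argument that exploits the nonrelativistic limit $c \to \infty$ (equivalently, the small-mass regime $N \ll 1$ after appropriate rescaling), reducing everything to the limiting Hartree equation \eqref{eq:liebintro}, whose ground state $\Qinf$ is unique and, crucially, has \emph{nondegenerate} linearization by Theorem \ref{th:nondegnr}. First I would introduce the natural scaling that trades the parameter $m$ (or the velocity of light $c$) and the mass constraint $N$ against one another: writing $\psi(x) = \nu^a Q_\infty(\nu^b x)$-type ans\"atze, one sees that rescaling a ground state $Q$ of \eqref{eq:var} with $\int |Q|^2 = N$ small yields, in the limit, a solution of \eqref{eq:liebintro}. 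Concretely, I expect that the rescaled ground states $\widetilde Q_N$ converge strongly in $H^{1/2}(\RR^3)$ (and in fact in $H^1$, after the $\Tm$ operator is expanded as $m + \tfrac{1}{2m}(-\Delta) + O(\Delta^2/m^3)$ on the relevant energy shell) to the unique positive radial $\Qinf$. The key analytic inputs here are the a priori bounds and exponential decay from Theorem \ref{th:intro}(ii), the strict binding/subadditivity of $E(N)$ which gives compactness of minimizing sequences, and control of the error terms in the symbol expansion of $\sqrt{-\Delta + m^2}$.

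Next I would set up the uniqueness argument proper. Suppose, for contradiction, that for a sequence $N_n \to 0$ with $E'(N_n)$ existing, there are two distinct symmetric-decreasing ground states $Q_n^{(1)} \neq Q_n^{(2)}$. After the rescaling above, both sequences converge to the same limit $\Qinf$; hence the difference $\eta_n := (\widetilde Q_n^{(1)} - \widetilde Q_n^{(2)})/\|\widetilde Q_n^{(1)} - \widetilde Q_n^{(2)}\|$, suitably normalized, satisfies a linear equation obtained by subtracting the two Hartree equations. In the limit $n \to \infty$ this linearized equation becomes $L_+ \eta = 0$ (modulo the contribution of the Lagrange-multiplier difference and the mass-constraint, which is where the hypothesis ``$E'(N)$ exists'' enters — it ensures the multipliers $\mu_n^{(1)}, \mu_n^{(2)}$ have the same limiting behavior, ruling out a spurious $\Qinf$ direction in the kernel). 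By the nondegeneracy result \eqref{eq:nondeg_LL}, the only possibility is $\eta \in \mathrm{span}\{\partial_{x_i}\Qinf\}$; but radial symmetry of the $\widetilde Q_n^{(j)}$ forces $\eta$ to be radial, hence orthogonal to all $\partial_{x_i}\Qinf$, so $\eta = 0$ — contradicting $\|\eta\| = 1$. The passage from ``$\eta = 0$ in the limit'' to ``$Q_n^{(1)} = Q_n^{(2)}$ for large $n$'' then follows by an implicit-function-theorem / quantitative-invertibility argument: nondegeneracy of $L_+$ on the space of radial functions means $L_+$ is invertible there, so by perturbation the linearized operator for \eqref{eq:bs} at small $N$ is uniformly invertible on radial functions, and a fixed-point argument pins down the ground state uniquely in a neighborhood of $\Qinf$.

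The main obstacle I anticipate is the interplay between the mass constraint and the Lagrange multiplier in the linearization — this is exactly why the theorem carries the qualifier ``whenever $E'(N)$ exists'' and ``except for at most countably many $N$.'' Specifically, when one subtracts the two equations $\Tm Q_n^{(j)} - (|x|^{-1}\ast|Q_n^{(j)}|^2)Q_n^{(j)} = -\mu_n^{(j)} Q_n^{(j)}$, one gets a term $(\mu_n^{(1)} - \mu_n^{(2)}) Q_n^{(j)}$ plus $\mu_n^{(2)}(Q_n^{(1)} - Q_n^{(2)})$; controlling the ratio $(\mu_n^{(1)} - \mu_n^{(2)})/\|Q_n^{(1)} - Q_n^{(2)}\|$ requires relating $\mu$ to $N$ via the envelope relation $\mu = -E'(N)$ (when the derivative exists), which is why differentiability of $E$ is needed — at points of non-differentiability two ground states could in principle coexist with genuinely different multipliers. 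Since $E$ is concave (or at least can be shown to be, e.g. as an infimum over an affine family after the standard substitution), it is differentiable outside a countable set, yielding the ``countably many exceptions'' in Theorem \ref{th:unique}. A secondary technical point is making the nonrelativistic limit quantitative enough — one needs not just $\widetilde Q_n \to \Qinf$ but convergence of the linearized operators in operator norm on a fixed space, which requires care with the nonlocal operators $\Tm$ and $|x|^{-1}\ast\,\cdot\,$ and uses the exponential decay and higher Sobolev regularity from Theorem \ref{th:intro}.
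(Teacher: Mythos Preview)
Your proposal is correct and follows essentially the same route as the paper: rescale to trade small $N$ for large $c$, establish the nonrelativistic limit $\Qc \to \Qinf$ in $H^1$, and then use the radial nondegeneracy of $L_+$ to run an implicit-function argument giving local uniqueness near $\Qinf$. One sharpening worth noting: when $E'(N)$ exists, the envelope relation $\mu = -E'(N)$ forces the Lagrange multipliers of any two ground states at that $N$ to be \emph{exactly equal}, not merely asymptotically close --- so there is no ratio $(\mu^{(1)}-\mu^{(2)})/\|Q^{(1)}-Q^{(2)}\|$ to control, and the local-uniqueness step applies directly with the same fixed pair $(c,\mu)$, which is how the paper organizes the argument and bypasses the normalized-difference compactness issue entirely.
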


\begin{remarks*}
{\em 1) Since it known from \cite{LiebYau1987} that the ground state energy $E(N)$ is strictly concave, the derivative $E'(N)$ exists for all $N \in (0,N_*)$, except on a subset $\Sigma$ which is at most countable. In particular, it is easy to see that the Lagrange multiplier $\mu$ is unique for such $N \in (0,N_*) \setminus \Sigma$, in the sense that $\mu$ only depends on $Q$ through $N = \int |Q|^2$. Our argument to prove Theorem \ref{th:unique} has to avoid the ``exceptional'' set $\Sigma$. A natural conjecture would be that  $\Sigma = \emptyset$ holds.

2) It would be desirable to extend this uniqueness result (whose proof partly relies on perturbative arguments) to the whole range $0 < N < \Ncrit$ of existence; or, more interestingly, to disprove uniqueness for some $N >0$ sufficiently large. 

3) Note that, by definition, ground states for the pseudo-relativistic Hartree equation (\ref{eq:var}) are always minimizers for the variational problem (\ref{eq:var}). In principle, we cannot exclude the possibility that equation (\ref{eq:bs}) has a positive solution without being a minimizer for (\ref{eq:var}).

4) To the author's knowledge, this is the first uniqueness result for ground states that solve a nonlinear pseudo-differential equation in space dimension $n > 1$. In fact, apart from a very special case arising in $n=1$ dimensions for solitary waves solving Benjamin-Ono-type equations (see \cite{AmickToland1991, Albert1995}), nothing seems to be known, for instance, about uniqueness of ground states $\varphi \in H^s(\RR^n)$ for nonlinear equations involving the fractional Laplacian:
\begin{equation} \label{eq:pseudoground}
(-\Delta)^{s/2} \varphi + f(\varphi) = -\mu \varphi,
\end{equation} 
where $f(\varphi)$ denotes some nonlinearity and $\mu \in \RR$ is given. The author plans to pursue this question in future work.

4) We remark that if $m=0$ vanishes, we have existence of ground states for problem (\ref{eq:var}) if and only if $N= \Ncrit$ holds. In what follows, we shall exclusively deal with the physically relevant case where $m >0$ holds. Nevertheless, it remains an interesting open question whether uniqueness of ground states also holds for $m=0$, since the methods developed here are clearly not applicable to this limiting case.
}
\end{remarks*}

Our next result proves a so-called nondegeneracy condition, which was introduced in \cite{FJLNonlinearity2007} as a spectral assumption supported by numerical evidence. There, the effective motion of solitary waves for equation (\ref{eq:thartree}) with an slowly varying external potential was studied. Furthermore, the following nondegeneracy result allows us to give an unconditional proof for the {\em cylindrical symmetry of traveling solitary waves} for the time-depenendent pseudo-relativistic Hartree equation (\ref{eq:thartree}); see \cite{FJLNonlinearity2007} for more details. The precise nondegeneracy statement reads as follows.

\begin{theorem} {\bf (Nondegeneracy of Ground States for $N \ll 1$.)} \label{th:nondeg}
Let $m > 0$ in (\ref{eq:En}) and suppose that $Q = Q^*$ is a symmetric-decreasing ground state for problem (\ref{eq:var}) with Lagrange multiplier $\mu \in \RR$. Furthermore, we consider the linear operator $L_+$ given by
\[
L_+ \xi = \big ( \sqrt{-\Delta + m^2} + \mu \big ) \xi - \big ( |x|^{-1} \ast |Q|^2 \big ) \xi - 2 Q \big ( |x|^{-1} \ast (Q \xi) \big ),
\]
acting on $L^2(\RR^3)$ with domain $H^1(\RR^3)$. Then, for $0 < N \ll 1$, the operator $L_+$ is nondegenerate, i.\,e., its kernel satisfies
\[
\mathrm{ker} \, L_+ = \mathrm{span} \, \big \{ \partial_{x_1} Q, \partial_{x_2} Q, \partial_{x_3} Q \big \}. 
\]
\end{theorem}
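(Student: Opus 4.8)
The plan is to deduce Theorem~\ref{th:nondeg} from the nondegeneracy statement~(\ref{eq:nondeg_LL}) for the limiting Choquard equation~(\ref{eq:liebintro}), by a compactness-and-perturbation argument run along the same nonrelativistic limit $c\to\infty$ that underlies Theorem~\ref{th:unique}. First I would rescale: as in the proof of Theorem~\ref{th:unique}, the regime $0<N\ll1$ of equation~(\ref{eq:bs}) is transformed, by a dilation $x\mapsto\epsilon x$ with $\epsilon=\epsilon(N)\to0$, into a family
\[
\Tmc \Qc - \big(|x|^{-1}\ast|\Qc|^2\big)\Qc = -\mu_c \Qc,\qquad \|\Qc\|_{L^2}^2 = N_c,
\]
with $c\to\infty$, $N_c$ bounded, $\Qc=\Qc^*$ symmetric-decreasing, $\Qc\to\Qinf$ strongly in $H^1(\RR^3)$, $\mu_c-mc^2\to\lambda>0$, and with uniform-in-$c$ decay bounds for $\Qc$ --- all available from the analysis proving Theorem~\ref{th:unique}. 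Since this dilation is a bijection on $H^1(\RR^3)$ intertwining $\ker L_+$ with the kernel of the correspondingly rescaled operator
\[
L_+^{(c)}\xi = \big(\Tmc + \mu_c\big)\xi - \big(|x|^{-1}\ast|\Qc|^2\big)\xi - 2\Qc\big(|x|^{-1}\ast(\Qc\xi)\big),
\]
it suffices to show $\ker L_+^{(c)}=\mathrm{span}\{\partial_{x_1}\Qc,\partial_{x_2}\Qc,\partial_{x_3}\Qc\}$ for $c$ large. Differentiating the equation for $\Qc$ gives $L_+^{(c)}\partial_{x_j}\Qc=0$, these three functions are linearly independent, and $L_+^{(c)}$ commutes with complex conjugation; hence the task reduces to ruling out, for large $c$, a real-valued $\xi\in H^1(\RR^3)$ with $L_+^{(c)}\xi=0$, $\|\xi\|_{L^2}=1$, and $\xi\perp\partial_{x_j}\Qc$ for $j=1,2,3$.

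I would argue by contradiction: if the claim fails there are $c_n\to\infty$ and real $\xi_n\in H^1(\RR^3)$ with $L_+^{(c_n)}\xi_n=0$, $\|\xi_n\|_{L^2}=1$, $\xi_n\perp\partial_{x_j}\Qcn$; setting $\mu_n':=\mu_{c_n}-mc_n^2\to\lambda$ and $V_n:=|x|^{-1}\ast|\Qcn|^2$, this reads
\[
\big(\Tmcn - mc_n^2 + \mu_n'\big)\xi_n = V_n\xi_n + 2\Qcn\big(|x|^{-1}\ast(\Qcn\xi_n)\big) =: g_n .
\]
The uniform $H^1$-bound on $\Qcn$ gives $\|V_n\|_{L^\infty}+\||x|^{-1}\ast(\Qcn\xi_n)\|_{L^\infty}\less 1$, hence $\sup_n\|g_n\|_{L^2}<\infty$; combined with the elementary pointwise bounds on the Fourier multiplier of $\Tmcn-mc_n^2$ (it is $\le \tfrac1{2m}|p|^2$ everywhere, $\gtrsim m^{-1}|p|^2$ on $\{|p|\le 2mc_n\}$, and $\gtrsim c_n|p|$ on $\{|p|\ge 2mc_n\}$) this yields $\sup_n\|\xi_n\|_{H^1}<\infty$. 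To prevent the $L^2$-mass of $\xi_n$ from leaking to spatial infinity --- the heart of the matter --- I would invoke Newton's theorem $V_n(x)\le N_{c_n}/|x|$ (valid since $\Qcn$ is radially decreasing) together with the uniform decay of $\Qcn$ to get $\sup_n\|V_n\|_{L^\infty(|x|>R)}\to0$ and $\sup_n\|\Qcn\|_{L^2(|x|>R)}\to0$ as $R\to\infty$, whence $\sup_n\|g_n\|_{L^2(|x|>R)}\to0$, i.e.\ $\{g_n\}$ is tight. Passing to a subsequence, $\xi_n\weakto\xi_\infty$ in $H^1$, so $\xi_n\to\xi_\infty$ in $L^2_{\mathrm{loc}}$, and tightness together with $\Qcn\to\Qinf$ in $H^1$ upgrades this to
\[
g_n\to g_\infty:=\big(|x|^{-1}\ast|\Qinf|^2\big)\xi_\infty + 2\Qinf\big(|x|^{-1}\ast(\Qinf\xi_\infty)\big)\quad\text{strongly in }L^2 .
\]
Since the multiplier of $\Tmcn-mc_n^2$ converges pointwise to $\tfrac1{2m}|p|^2$ and is dominated by it, testing the equation against $C^\infty_c$-functions gives $-\tfrac1{2m}\Delta\xi_\infty+\lambda\xi_\infty=g_\infty$, i.e.\ $L_+\xi_\infty=0$ with $L_+$ as in~(\ref{eq:LLL}). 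Moreover, because $\mu_n'\ge\lambda/2$ eventually, the resolvents $(\Tmcn-mc_n^2+\mu_n')^{-1}$ are uniformly bounded on $L^2$, so expressing $\xi_n-\xi_\infty$ through them and using $g_n\to g_\infty$ together with a short dominated-convergence argument for $(\Tmcn-mc_n^2)u\to-\tfrac1{2m}\Delta u$ on $u\in H^2$, one gets $\xi_n\to\xi_\infty$ \emph{strongly} in $L^2$; in particular $\|\xi_\infty\|_{L^2}=1$.

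Finally, by~(\ref{eq:nondeg_LL}) (Theorem~\ref{th:nondegnr}) we have $\xi_\infty\in\mathrm{span}\{\partial_{x_1}\Qinf,\partial_{x_2}\Qinf,\partial_{x_3}\Qinf\}$, while $\partial_{x_j}\Qcn\to\partial_{x_j}\Qinf$ in $L^2$ and $\xi_n\perp\partial_{x_j}\Qcn$ force $\xi_\infty\perp\partial_{x_j}\Qinf$ for all $j$; hence $\xi_\infty=0$, contradicting $\|\xi_\infty\|_{L^2}=1$. Undoing the dilation then yields Theorem~\ref{th:nondeg}. I expect the only genuine obstacle to be the compactness in the middle step --- excluding concentration of the would-be kernel elements $\xi_n$ at spatial infinity as $c\to\infty$ --- where the $1/|x|$-decay of $V_n$ from Newton's theorem, the uniform decay of $\Qcn$, and the fact (inherited from the nonrelativistic limit) that $\mu_n'$ stays bounded away from $0$, so that $\Tmcn-mc_n^2+\mu_n'$ is uniformly invertible, are the crucial ingredients. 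Everything else is soft: the always-present kernel directions $\partial_{x_j}\Qc$, the pointwise multiplier estimates, and the stability of the kernel dimension under the nonrelativistic limit given the nondegeneracy~(\ref{eq:nondeg_LL}) of the Choquard linearization.
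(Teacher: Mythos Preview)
Your argument is correct and rests on the same two pillars as the paper: the nonrelativistic limit $\Qc\to\Qinf$ in $H^1$ with $\mu_c-mc^2\to\lambda$ (Proposition~\ref{prop:nrlimit}), and the nondegeneracy of the limiting Choquard linearization (Theorem~\ref{th:nondegnr}). The difference lies only in how you pass from the latter to the former. The paper proceeds operator-theoretically: it uses the norm resolvent convergence $\|( L_{+,c}-z)^{-1}-(L_+-z)^{-1}\|_{L^2\to L^2}\to0$ (essentially~(\ref{eq:Rescon})) to show that the Riesz projection $P_{0,c}=\tfrac{1}{2\pi i}\oint_{\Gamma_r}(L_{+,c}-z)^{-1}\,dz$ converges in norm to $P_0$, whence $\mathrm{rank}\,P_{0,c}=\mathrm{rank}\,P_0=3$ for $c\gg1$ and thus $\dim\ker L_{+,c}\le3$. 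You instead run a direct compactness--contradiction scheme on hypothetical extra kernel elements $\xi_n\perp\partial_{x_j}\Qcn$: uniform $H^1$-bounds, tightness of $g_n$ via Newton's theorem and the $H^1$-compactness of $\{\Qcn\}$, strong $L^2$-convergence $g_n\to g_\infty$, and then $\xi_n\to\xi_\infty$ in $L^2$ through the resolvents.

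Both arguments ultimately hinge on the same analytic input---the pointwise multiplier bound $0\le \sqrt{c^2|p|^2+m^2c^4}-mc^2\le |p|^2/(2m)$ together with $\mu_c-mc^2$ staying in a compact subset of $(0,\infty)$---so neither is essentially harder. The paper's Riesz-projection route is somewhat cleaner once norm resolvent convergence is in hand (no need to track tightness of $g_n$ or upgrade weak to strong convergence of $\xi_n$ by hand), and it gives slightly more: it localizes \emph{all} spectrum of $L_{+,c}$ near $0$, not just the kernel. Your approach is more elementary in that it avoids the contour-integral machinery and makes the role of the decay of $V_n$ and $\Qcn$ explicit; it would also adapt more readily to situations where only strong (not norm) resolvent convergence is available.
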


\begin{remarks*}
{\em 1) This completely characterizes the kernel of the linearization of the pseudo-relativistic Hartree equation (\ref{eq:bs}) around ground state $Q=Q^*$ with $\int |Q|^2 \ll 1$. Note that, due to the presence of $|Q|^2$ in the nonlinearity, the linearized operator is not $\CC$-linear. See also the remark following Theorem \ref{th:nondegnr} below for more details on the analogous statement for the nonrelativistic equation (\ref{eq:liebintro}).

2) Note that the nondegeneracy of $L_+$ holds for all $N= \int |Q|^2 \ll 1$. The extra condition that $E'(N)$ exists, which is present in Theorem \ref{th:unique}, is not needed here.}
\end{remarks*}

In order to prove Theorem \ref{th:nondeg}, we first have to show the nondegeneracy for the linearization around the ground state $\Qinf \in H^1(\RR^3)$ solving the nonrelativistic Hartree equation (\ref{eq:liebintro}). As mentioned before, this spectral result is of independent interest, since it proves a key assumption in \cite{FTY2002,JFGS2004,JFGS2006,Walid2007}. See also \cite{KriegerMartelRaphael2008}, where the following nondegeneracy result is needed. Hence we record this fact about (\ref{eq:liebintro}) as one of our main results.

\begin{theorem} {\bf (Nondegeneracy for $\Qinf$.)} \label{th:nondegnr}
Let $m > 0$ and $\lambda > 0$ be given. Furthermore, suppose that $\Qinf \in H^1(\RR^3)$ is the unique radial, positive solution to the nonrelativistic Hartree equation (\ref{eq:liebintro}). Then the linear operator $L_+$ given by
\begin{equation}
L_+ \xi = -\frac{1}{2m} \Delta \xi + \lambda \xi - \big ( |x|^{-1} \ast |\Qinf|^2 \big ) \xi - 2 \Qinf \big ( |x|^{-1} \ast (\Qinf \xi) \big )
\end{equation}
acting on $L^2(\RR^3)$ with domain $H^2(\RR^3)$, satisfies
\begin{equation}
\mathrm{ker} \, L_+ = \mathrm{span} \, \big \{ \partial_{x_1} \Qinf, \partial_{x_2} \Qinf, \partial_{x_3} \Qinf \big \} .
\end{equation}
\end{theorem}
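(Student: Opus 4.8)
\medskip
\noindent\textbf{Proof strategy.} The plan is to exploit the rotational invariance of $L_+$ (valid because $\Qinf$ is radial) and reduce the statement to a sector‑by‑sector analysis. I would decompose $L^2(\RR^3)=\bigoplus_{\ell\ge 0}\mathcal{H}_{(\ell)}$, where $\mathcal{H}_{(\ell)}$ is spanned by the functions $g(|x|)\,Y(x/|x|)$ with $Y$ a spherical harmonic of degree $\ell$. Using the multipole expansion
\[
\frac{1}{|x-y|}=\sum_{\ell\ge 0}\frac{4\pi}{2\ell+1}\,\frac{r_<^\ell}{r_>^{\ell+1}}\sum_m Y_\ell^m(x/|x|)\,\overline{Y_\ell^m(y/|y|)},\qquad r_<=\min(|x|,|y|),\ r_>=\max(|x|,|y|),
\]
one checks that the convolution term in $L_+$ preserves each $\mathcal{H}_{(\ell)}$; hence so does $L_+$, and $\ker L_+=\bigoplus_{\ell\ge 0}\ker\!\big(L_+|_{\mathcal{H}_{(\ell)}}\big)$. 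It will be convenient to write $L_+=L_--2A$ with the ``local part'' $L_-\xi:=-\tfrac1{2m}\Delta\xi+\lambda\xi-(|x|^{-1}\ast|\Qinf|^2)\xi$ and $A\xi:=\Qinf\big(|x|^{-1}\ast(\Qinf\xi)\big)$. By~(\ref{eq:liebintro}) one has $L_-\Qinf=0$, and since $\Qinf>0$ this makes $\Qinf$ the ground state of the Schr\"odinger operator $L_-$, so $L_-\ge 0$ with $\ker L_-=\mathrm{span}\{\Qinf\}$. On the $\ell$‑th sector $A$ acts on radial profiles as the integral operator $A_\ell$ with the \emph{nonnegative} kernel $\tfrac{4\pi}{2\ell+1}\,\Qinf(r)\,\tfrac{r_<^\ell}{r_>^{\ell+1}}\,\Qinf(s)$, and $L_+|_{\mathcal{H}_{(\ell)}}=L_-|_{\mathcal{H}_{(\ell)}}-2A_\ell$ on $L^2((0,\infty),r^2\,dr)$.

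For the sectors $\ell\ge1$ I would argue as follows. Since $A_\ell$ has a nonnegative integral kernel and $L_-|_{\mathcal{H}_{(\ell)}}$ is a one‑dimensional Schr\"odinger operator (the centrifugal term $\tfrac{\ell(\ell+1)}{2mr^2}$ being just another potential), a Trotter/Duhamel expansion shows the semigroup $e^{-tL_+|_{\mathcal{H}_{(\ell)}}}$ is positivity improving on radial profiles; by Perron--Frobenius the bottom eigenvalue $e_0^{(\ell)}$ of $L_+|_{\mathcal{H}_{(\ell)}}$ is then simple with a strictly positive eigenfunction $\psi_0^{(\ell)}$. For $\ell=1$: translation invariance of~(\ref{eq:liebintro}) gives $\partial_{x_j}\Qinf\in\ker L_+$, and these three functions lie in $\mathcal{H}_{(1)}$ with common radial profile $\partial_r\Qinf$, which does not change sign because the ground state $\Qinf$ is radially nonincreasing (cf.\ Appendix~A). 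A sign‑definite eigenfunction must realize the bottom of the spectrum, so $e_0^{(1)}=0$ and $\ker\!\big(L_+|_{\mathcal{H}_{(1)}}\big)=\mathrm{span}\{\partial_r\Qinf\}$; equivalently, $\ker L_+\cap\mathcal{H}_{(1)}=\mathrm{span}\{\partial_{x_1}\Qinf,\partial_{x_2}\Qinf,\partial_{x_3}\Qinf\}$. For $\ell\ge2$ I would show $e_0^{(\ell)}$ is \emph{strictly} increasing in $\ell$: testing the variational bottom eigenvalue with $\psi_0^{(\ell)}>0$ and comparing with $L_+|_{\mathcal{H}_{(\ell-1)}}$,
\[
e_0^{(\ell)}\ \ge\ e_0^{(\ell-1)}+\frac{\big\langle\psi_0^{(\ell)},\,\big(\tfrac{\ell}{mr^2}+2(A_{\ell-1}-A_\ell)\big)\psi_0^{(\ell)}\big\rangle}{\|\psi_0^{(\ell)}\|^2}\ >\ e_0^{(\ell-1)},
\]
since the centrifugal difference is strictly positive and $A_{\ell-1}-A_\ell$ has a pointwise nonnegative kernel (because $\tfrac1{2\ell-1}r_<^{\ell-1}r_>^{-\ell}\ge\tfrac1{2\ell+1}r_<^{\ell}r_>^{-\ell-1}$), so it is tested nonnegatively against $\psi_0^{(\ell)}>0$. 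Hence $e_0^{(\ell)}>e_0^{(1)}=0$ for $\ell\ge2$, and therefore $\ker\!\big(L_+|_{\mathcal{H}_{(\ell)}}\big)=\{0\}$ there.

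The remaining and decisive case is the radial sector $\ell=0$, which I expect to be the main obstacle. Here $\langle\Qinf,L_+\Qinf\rangle=-2\int_{\RR^3}\Qinf^2\,(|x|^{-1}\ast\Qinf^2)<0$, so the Perron--Frobenius bottom eigenvalue $e_0^{(0)}$ of $L_+|_{\mathcal{H}_{(0)}}$ is negative; on the other hand, since $\Qinf$ minimizes $\Enr$ under the mass constraint, the constrained Hessian is nonnegative, i.e.\ $\langle\xi,L_+|_{\mathcal{H}_{(0)}}\xi\rangle\ge0$ for all real radial $\xi\perp\Qinf$. Together these pin the Morse index of $L_+|_{\mathcal{H}_{(0)}}$ to be exactly one. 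Moreover, differentiating~(\ref{eq:liebintro}) in $\lambda$ (the solutions form a smooth family $\Qinf^{(\lambda)}$, by scaling) gives $L_+|_{\mathcal{H}_{(0)}}(\partial_\lambda\Qinf)=-\Qinf$, so $\Qinf\in\mathrm{Ran}\,L_+|_{\mathcal{H}_{(0)}}$ and hence every $\phi\in\ker\!\big(L_+|_{\mathcal{H}_{(0)}}\big)$ satisfies $\phi\perp\Qinf$ (and likewise $\phi\perp(|x|^{-1}\ast\Qinf^2)\Qinf=-\tfrac1{2m}\Delta\Qinf+\lambda\Qinf$). Thus what remains is to rule out that $0$ is the \emph{second} radial eigenvalue of $L_+|_{\mathcal{H}_{(0)}}$, sitting on $\{\Qinf\}^\perp$ — exactly the step for which, in the case of a local nonlinearity, one invokes Sturm--Liouville oscillation theory, which is unavailable here because of the nonlocality. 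My plan for this last step is to exploit the rigid algebraic structure of the Choquard nonlinearity: $\Enr$ is a \emph{quartic} functional, so any such $\phi$ — which, being orthogonal to the positive ground state $\psi_0^{(0)}$, must change sign — would be forced by minimality to satisfy in addition the third‑variation identity $\int_{\RR^3}(|x|^{-1}\ast\phi^2)\,\Qinf\phi=0$; combining this and the orthogonality relations above with the explicit scaling $\Qinf^{(\lambda)}(x)=\lambda\,\Qinf^{(1)}(\sqrt{\lambda}\,x)$ (which in particular yields the strict slope condition $\tfrac{d}{d\lambda}\|\Qinf^{(\lambda)}\|_2^2>0$) should lead to a contradiction, forcing $\phi\equiv0$ and completing the proof.
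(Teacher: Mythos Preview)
Your treatment of the sectors $\ell\ge 1$ is essentially the paper's argument: spherical-harmonic decomposition, a Perron--Frobenius property for each $L_{+,(\ell)}$ (obtained from positivity of the heat/resolvent kernel plus the nonnegative integral kernel of the nonlocal piece), identification of $-\partial_r\Qinf$ as the positive ground state for $\ell=1$, and a strict monotonicity $e_0^{(\ell)}>e_0^{(1)}=0$ for $\ell\ge 2$ coming from both the centrifugal gain and the pointwise ordering of the multipole kernels. The paper compares $\ell$ directly to $\ell=1$ rather than to $\ell-1$, but this is cosmetic.

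The gap is in the radial sector $\ell=0$. Everything you write there up to and including ``$\phi\perp\Qinf$'' is fine, but the closing step --- that the third-variation identity $\int(|x|^{-1}\ast\phi^2)\Qinf\phi=0$ together with $\phi\perp\Qinf$, $\phi\perp(-\tfrac1{2m}\Delta+\lambda)\Qinf$, and the slope condition $\tfrac{d}{d\lambda}\|\Qinf^{(\lambda)}\|_2^2>0$ ``should lead to a contradiction'' --- is not an argument. The slope condition only gives Morse index one (which you already have); it does not by itself exclude a zero eigenvalue on $\{\Qinf\}^\perp$. The third-variation constraint is a single scalar relation on an infinite-dimensional space and, absent further structure, there is no evident reason it forces $\phi\equiv 0$. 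You have correctly identified the obstacle but not overcome it.

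The paper resolves $\ell=0$ by a different mechanism that does not pass through variational/Morse-index reasoning at all. Using Newton's theorem for radial $\xi$, one rewrites
\[
L_+\xi=\mathcal L_+\xi-2\sigma\Qinf,\qquad \sigma=\int_{\RR^3}\frac{\Qinf\xi}{|x|},
\]
where $\mathcal L_+$ is a radial \emph{Volterra}-type integro-differential operator (the nonlocal piece becomes $2\Qinf(r)\int_0^r K(r,s)\Qinf(s)\xi(s)\,ds$ with $K\ge 0$). One then proves an ODE-style comparison lemma: any solution of $\mathcal L_+v=0$ with $v(0)\ne 0$, $v'(0)=0$ is sign-definite and grows exponentially, hence is not in $L^2$. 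Finally, since $R=2\Qinf+r\partial_r\Qinf$ satisfies $L_+R=-2\Qinf$ (equivalently $\mathcal L_+R=2(\tau-1)\Qinf$ for some $\tau\ne 1$), one produces an explicit $L^2$ particular solution of $\mathcal L_+ w=2\sigma\Qinf$; writing $\xi=v+w$ then forces $v\equiv 0$, which in turn forces $\xi\equiv 0$. This Newton's-theorem/Wronskian argument is the missing idea in your proposal.
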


\begin{remarks*}
{\em 1) The linearized operator $L$ for equation (\ref{eq:liebintro}) at $\Qinf$ is found to be
\begin{equation*}
L h = -\frac{1}{2m} \Delta h + \lambda h - \big ( |x|^{-1} \ast |\Qinf|^2 \big ) h - \Qinf \big ( |x|^{-1} \ast  (\Qinf ( h + \overline{h} )) \big ) .
\end{equation*}
It is convenient to view the operator $L$ (which is not $\CC$-linear) as acting on $({ \mathrm{Re} \, h \atop \mathrm{Im} \, h } )$, so that it can be written as
\begin{equation*}
L = \left ( \begin{array}{ll} L_+ &0 \\ 0 & L_- \end{array} \right ) .
\end{equation*}
Here $L_+$ is as in Theorem \ref{th:nondegnr} above, and $L_-$ is the (local) operator
\begin{equation*}
L_- = -\frac{1}{2m} \Delta + \lambda - \big ( |x|^{-1} \ast |\Qinf|^2 ) .
\end{equation*}
It is easy to see that $\mathrm{ker} \, L_- = \mathrm{span} \, \{ \Qinf \}$ holds. Hence, by Theorem \ref{th:nondegnr}, we obtain
\begin{equation*} \label{eq:kerLLgood}
\mathrm{ker} \, L = \mathrm{span} \, \Big \{ \left ( {\partial_{x_1} \Qinf \atop 0 } \right ), \left ( {\partial_{x_2} \Qinf \atop 0 } \right ), \left ( {\partial_{x_2} \Qinf \atop 0 } \right ), \left ( {0 \atop \Qinf } \right ) \Big \} .
\end{equation*}

2) The precise knowledge of $\mathrm{ker} \, L$ implies, by well-known arguments along the lines for NLS with local nonlinearities (given in \cite{Weinstein1985}), the following coercivity estimate: There is a constant $\delta > 0$ such that
\begin{equation*} 
\langle f, L_+ f \rangle + \langle g, L_- g \rangle \geq \delta ( \| f \|_{H^1}^2 + \| g \|_{H^1}^2 ),
\end{equation*}
when $f \perp \mathrm{span} \, \{ \Qinf, x_i \Qinf \}_{i=1}^3 $ and $g \perp \mathrm{span} \, \{ 2\Qinf + r \partial_r \Qinf, \partial_{x_i} \Qinf \}_{i=1}^3$, which means that $(f,g)$ is {\em symplectically orthogonal} to the ``soliton manifold'' generated by $\Qinf$; see, e.\,g., \cite{JFGS2004}. This coercivity estimate plays a central role in the stability analysis of solitary waves for NLS-type equations and their effective motion in an external potential; see, e.\,g., \cite{BronskiJerrard2000,FJLNonlinearity2007,JFGS2004,JFGS2006,HolmerZworski2007,Walid2007,Weinstein1985}. }
\end{remarks*}

\subsection*{Organization of the Paper}
This paper is structured as follows. In Section \ref{sec:nrlimit}, we study the nonrelativistic limit of ground states for a dimensionalized version of the variational problem (\ref{eq:var}). In Section \ref{sec:nondegnr}, we prove a nondegeneracy result for the nonrelativistic ground state $\Qinf \in H^1(\RR^3)$ in the radial setting. Then, in Section \ref{sec:localunique}, we establish a local uniqueness result around $\Qinf \in H^1(\RR^3)$ by means of an implicit-function-type argument. 

Finally, we prove Theorem \ref{th:unique}, \ref{th:nondeg} and \ref{th:nondegnr} in Sections \ref{sec:proof:th:unique} and \ref{sec:proof:th:nondeg}, respectively. The Appendix collects some auxiliary results and we also give a uniqueness proof for the ground state $\Qinf \in H^1(\RR^3)$, which differs from \cite{Lieb1977} in certain ways. 

\subsection*{Notation and Conventions}
As usual $H^s(\RR^n)$ stands for the inhomogeneous Sobolev space of order $s \in \RR$, equipped with norm $\| f \|_{H^s} = \| \langle \nabla \rangle^s f \|_{L^2}$, where $\langle \nabla \rangle$ is defined via its multiplier $\langle \xi \rangle = (1+\xi^2)^{1/2}$ in the Fourier domain. Also, we shall make use of the space of radial and real-valued functions that belong to $H^1(\RR^3)$, which we denote by
\[
\Honer = \{ f : f \in H^1(\RR^3), \; \mbox{$f$ is radial and real-valued} \} .
\]
With the usual abuse of notation we shall write both $f(x)$ and $f(r)$, with $r = |x|$, for radial functions $f$ on $\RR^n$. For any measurable function $f : \RR^n \to \CC$ that vanishes at infinity, we denote its symmetric-decreasing rearrangement by $f^* = f^*(r) \geq 0$.

Throughout this paper, we assume that the mass parameter $m >0$ in (\ref{eq:En}) is strictly positive, which is the physically relevant case. 

For the reader's orientation, we mention that our definition of $\En(\psi)$ in (\ref{eq:En}) differs from the conventions in \cite{LiebYau1987, FJLCMP2007} by an inessential factor of 2 and by the fact that we use $\sqrt{-\Delta + m^2} $ instead of $\sqrt{-\Delta + m^2} -m$. Obviously, these slight alterations in our definition of $\En(\psi)$ do not affect any results on (\ref{eq:var}) that are derived or quoted in the present paper.

Finally, we point out that the function $\Qinf \in H^1_r(\RR^3)$, which denotes the unique ground state for equation (\ref{eq:liebintro}), appears throughout the paper. However, for the sake of simple notation, we shall also denote all its rescaled copies $a \Qinf(b \cdot)$, with $a > 0$ and $b>0$, simply by $\Qinf$, whenever there is no source of confusion.

\subsection*{Acknowledgments}
It is a pleasure to thank Joachim Krieger and Maciej Zworski for helpful discussions, as well as Mathieu Lewin for pointing out to results on the nonrelativistic limit for Dirac-Fock equations. The author is also indebted to Mohammed Lemou and Pierre Rapha\"el, who found a gap in the previous version of this paper. This work was partially supported by the National Science Foundation Grant DMS-0702492.

\section{Nonrelativistic Limit} \label{sec:nrlimit}

As a preliminary step towards the proof of Theorems \ref{th:unique} and \ref{th:nondeg}, we study the nonrelativistic limit of ground states for the pseudo-relativistic Hartree energy functional. More precisely, we reinstall the speed of light $c > 0$ into $\En(\psi)$ defined in (\ref{eq:En}), which yields the $c$-depending Hartree energy functional
\begin{equation} \label{eq:Enc}
\Enc(\psi) = \int_{\RR^3} \overline{\psi} \Tmc \psi - \frac{1}{2} \int_{\RR^3} \big (|x|^{-1} \ast |\psi|^2 \big ) |\psi|^2 .
\end{equation}
An elementary calculation shows that, for any $\psi \in H^{1/2}(\RR^3)$,
\begin{equation} \label{eq:equiv}
\En(\psi) = c^{-3} \Enc( \widetilde{\psi} ), \quad \mbox{with $\psi(x) = c^{-2} \widetilde{\psi}(c^{-1} x)$.}
\end{equation}
Thus we immediately find the following equivalence.
\begin{lemma} \label{lemma:equivalence}
Let $c > 0$ and $N > 0$. Then $\widetilde{Q} \in H^{1/2}(\RR^3)$ minimizes $\Enc(\psi)$ subject to $\int |\psi|^2 = N$ if and only if $Q = c^{-2} \widetilde{Q}(c^{-1} \cdot)$ minimizes $\En(\psi)$ subject to $\int |\psi|^2 = c^{-1} N$.

In particular, we have existence of ground states for $\Enc(\psi)$ subject to $\int |\psi|^2 = N$ if and only if $0 < N < c \Ncrit$ holds, where $\Ncrit > 4/\pi$ denotes the same universal constant as in Theorem \ref{th:intro}.
\end{lemma}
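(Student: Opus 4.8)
The plan is to reduce the entire statement to the single scaling identity (\ref{eq:equiv}), which I would establish first by an elementary computation, treating the two pieces of $\En(\psi)$ separately. Writing $\psi(x) = c^{-2}\widetilde{\psi}(c^{-1}x)$, the change of variables $y=cz$ (and then $x=cw$) in the Coulomb double integral, combined with the homogeneity $|cx|^{-1} = c^{-1}|x|^{-1}$, shows that the potential energy transforms as $\tfrac12\int(|x|^{-1}\ast|\psi|^2)|\psi|^2 = c^{-3}\cdot\tfrac12\int(|x|^{-1}\ast|\widetilde{\psi}|^2)|\widetilde{\psi}|^2$. For the kinetic term I would pass to Fourier variables: the dilation gives $\widehat{\psi}(\xi) = c\,\widehat{\widetilde{\psi}}(c\xi)$, and then the substitution $\eta = c\xi$ together with the symbols $\sqrt{\xi^2+m^2}$ of $\Tm$ and $\sqrt{c^2\eta^2+m^2c^4}=c\sqrt{\eta^2+m^2c^2}$ of $\Tmc$ yields $\int\overline{\psi}\,\Tm\psi = c^{-3}\int\overline{\widetilde{\psi}}\,\Tmc\widetilde{\psi}$. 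Adding the two pieces proves (\ref{eq:equiv}). The same change of variables in $\int|\psi|^2$ gives $\int|\psi|^2 = c^{-1}\int|\widetilde{\psi}|^2$, so the constraint $\int|\widetilde{\psi}|^2 = N$ is equivalent to $\int|\psi|^2 = c^{-1}N$.

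Next I would observe that the map $T_c\colon\widetilde{\psi}\mapsto c^{-2}\widetilde{\psi}(c^{-1}\cdot)$ is a bijection of $H^{1/2}(\RR^3)$ onto itself, with inverse $\psi\mapsto c^{2}\psi(c\,\cdot)$, and that by the above it carries the constraint set $\{\int|\cdot|^2 = N\}$ bijectively onto $\{\int|\cdot|^2 = c^{-1}N\}$ while multiplying the value of the energy by the fixed positive constant $c^{-3}$. Consequently $\widetilde{Q}$ realizes the infimum of $\Enc$ over $\{\int|\psi|^2 = N\}$ if and only if $Q = T_c\widetilde{Q}$ realizes the infimum of $\En$ over $\{\int|\psi|^2 = c^{-1}N\}$. (As a byproduct one also gets $E_c(N) = c^{3}E(c^{-1}N)$ for the corresponding ground-state energies.) This is the first assertion of the lemma.

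Finally, for the existence threshold I would invoke Theorem \ref{th:intro}(i), according to which a ground state for $\En$ subject to $\int|\psi|^2 = \widetilde{N}$ exists precisely when $0 < \widetilde{N} < \Ncrit$. Applying the equivalence just proved with $\widetilde{N} = c^{-1}N$, a ground state for $\Enc$ subject to $\int|\psi|^2 = N$ exists if and only if $0 < c^{-1}N < \Ncrit$, that is, if and only if $0 < N < c\Ncrit$, which is exactly the ``in particular'' claim.

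I do not expect a genuine obstacle here; the one point requiring a little care is the behaviour of the pseudodifferential kinetic energy under the rescaling $\psi(x)=c^{-2}\widetilde{\psi}(c^{-1}x)$, which is cleanest to handle on the Fourier side as above, making sure that the powers of $c$ produced by the symbol of $\Tmc$, by the dilation, and by the $c^{-2}$ amplitude factor combine to the single factor $c^{-3}$ shared with the potential term.
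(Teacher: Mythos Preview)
Your proposal is correct and follows exactly the route the paper intends: the paper states the scaling identity (\ref{eq:equiv}) as an ``elementary calculation'' and then asserts that the lemma follows immediately, without giving any further details. You have simply written out that elementary calculation (separating kinetic and potential parts, handling the kinetic term on the Fourier side) and made explicit the bijection between constraint sets and the appeal to Theorem~\ref{th:intro}(i) for the existence threshold; there is nothing to add or correct.
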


We now study the behavior of ground states $\Qc$ for $\Enc(\psi)$ as $c \to \infty$ with $\int_{\RR^3} |\Qc|^2=N$ being fixed. By Lemma \ref{lemma:equivalence}, this is equivalent (after a suitable rescaling) to studying ground states for $\En(\psi)$ with $\int |\psi|^2 = N$ as $N \to 0$. However, the following analysis turns out to be more transparent when working with $c > 0$ as a parameter and sending $c$ to infinity.  Concerning the nonrelativistic limit $c \to \infty$ of ground states for $\Enc(\psi)$, we have the following result.

\begin{proposition} \label{prop:nrlimit}
Let $m >0$ and $N > 0$ be given, and suppose that $c_n \to \infty$ as $n \to \infty$. Furthermore, we assume that $\{ \Qcn \}_{n=1}^\infty$ is a sequence of symmetric-decreasing ground states such that $\int_{\RR^3} |\Qcn|^2 = N$ for all $n \geq 1$, and each $\Qcn \in H^{1/2}(\RR^3)$ minimizes $\En_{c_n}(\psi)$ subject to $\int_{\RR^3} |\psi|^2 = N$. Finally, let $\{ \mu_{c_n} \}_{n=1}^\infty$ denote the sequence of Lagrange multipliers corresponding to $\{ \Qcn \}_{n=1}^\infty$. 

Then the following holds:
\[
\Qcn \to \Qinf \quad {in} \quad H^1(\RR^3) \quad \mbox{as} \quad n \to \infty,
 \]
and
\[
-\mu_{c_n} - m c_n^2 \to -\lambda \quad \mbox{as} \quad n \to \infty,
\]
where $\Qinf \in H^1(\RR^3)$ is the unique radial, positive solution to
\begin{equation} \label{eq:lieb}
-\frac{1}{2m} \Delta \Qinf - \big ( |x|^{-1} \ast |\Qinf|^2 \big ) \Qinf = -\lambda \Qinf,
\end{equation}
such that $\int_{\RR^3} |\Qinf|^2 = N$. Here $\lambda > 0$ is determined through $\Qinf = \Qinf^* \in H^{1}(\RR^3)$, which is the unique symmetric-decreasing minimizer of the variational problem
\begin{equation} \label{eq:Enr}
E_{\mathrm{nr}}(N) = \inf \Big \{ \Enr(\psi) : \mbox{$\psi \in H^1(\RR^3)$ and $\displaystyle \int_{\RR^3} |\psi|^2 = N$} \Big \} ,
\end{equation}
where
\begin{equation} \label{eq:Enr2}
\Enr(\psi) = \frac{1}{2m} \int_{\RR^3} |\nabla \psi|^2 - \frac{1}{2} \int_{\RR^3} \big ( |x|^{-1} \ast |\psi|^2 \big ) |\psi|^2 .
\end{equation}
\end{proposition}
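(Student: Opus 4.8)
\medskip
The plan is to prove both statements at once by first deriving uniform bounds for the $\Qcn$, then extracting a limit and identifying it via the Euler--Lagrange equation together with the variational characterisation of $\Qinf$. I would start with an energy comparison. The pointwise inequality $\sqrt{c^2|\xi|^2+m^2c^4}-mc^2\le|\xi|^2/(2m)$ (concavity of $\sqrt{\cdot}$) gives $\Enc(\psi)-mc^2N\le\Enr(\psi)$ for every $\psi\in H^1(\RR^3)$ with $\int|\psi|^2=N$; hence, writing $E_c(N):=\inf\{\Enc(\psi):\int|\psi|^2=N\}$, we obtain $E_{c_n}(N)-mc_n^2N\le E_{\mathrm{nr}}(N)<0$, the strict negativity of $E_{\mathrm{nr}}(N)$ following from a scaling trial function. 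Setting $T_n:=\langle\Qcn,(\Tmcn-mc_n^2)\Qcn\rangle\ge0$ and $D(\psi):=\int(|x|^{-1}\ast|\psi|^2)|\psi|^2$, so that $\Encn(\Qcn)-mc_n^2N=T_n-\tfrac12D(\Qcn)=E_{c_n}(N)-mc_n^2N$, this in particular forces $T_n<\tfrac12D(\Qcn)$.

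The heart of the argument is the uniform a priori bound. Combining $D(\psi)\less N\,\||\nabla|^{1/2}\psi\|_{L^2}^2$ (Hardy--Littlewood--Sobolev and Gagliardo--Nirenberg) with the lower bounds $\Tmcn-mc_n^2\gtrsim|\xi|^2/m$ on $\{|\xi|\le mc_n\}$ and $\gtrsim c_n|\xi|$ on $\{|\xi|>mc_n\}$ and the inequality $|\xi|\le\tfrac12(\epsilon|\xi|^2+\epsilon^{-1})$, one can absorb terms in $T_n<\tfrac12D(\Qcn)$ (for $\epsilon$ small and $n$ large) to get $\int_{|\xi|\le mc_n}|\xi|^2|\widehat{\Qcn}|^2\le C(m,N)$ and $\int_{|\xi|>mc_n}|\xi|\,|\widehat{\Qcn}|^2\to0$; thus $\|\Qcn\|_{\dot H^{1/2}}$, $D(\Qcn)$ and $T_n$ are bounded uniformly, and testing the Euler--Lagrange equation against $\Qcn$ shows that $\lambda_n:=\mu_{c_n}+mc_n^2=(D(\Qcn)-T_n)/N$ is bounded and satisfies $\lambda_n\ge\tfrac12D(\Qcn)/N\ge-E_{\mathrm{nr}}(N)/N=:\lambda_0>0$. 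The remaining, and most delicate, point is to upgrade this to a uniform $H^1$ (indeed $H^2$) bound: since $\Tmcn-mc_n^2$ grows only like $c_n|\xi|$ at high frequencies, the energy does not control $\|\nabla\Qcn\|_{L^2}$, so one must feed the equation back in. Rewriting it as $(\Tmcn-mc_n^2+\lambda_n)\Qcn=(|x|^{-1}\ast|\Qcn|^2)\Qcn$, whose left-hand symbol is $\ge\lambda_0>0$ everywhere, $\gtrsim|\xi|^2$ on $\{|\xi|\le mc_n\}$ (so that its reciprocal is a bounded multiplier near the origin and gains two derivatives there), and $\gtrsim c_n|\xi|$ on $\{|\xi|>mc_n\}$, and using the standard estimates $\|(|x|^{-1}\ast|\Qcn|^2)\Qcn\|_{L^2}\less\|\Qcn\|_{H^{1/2}}^3$ and $\|(|x|^{-1}\ast|\Qcn|^2)\Qcn\|_{\dot H^1}\less\|\Qcn\|_{H^1}^3$, a two-step bootstrap yields $\|\Qcn\|_{H^1}+\|\Qcn\|_{\dot H^2}\le C(m,N)$ and $\int_{|\xi|>mc_n}|\xi|^2|\widehat{\Qcn}|^2\to0$. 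A direct estimate of $|\xi|^2/(2m)-(\Tmcn-mc_n^2)$ then gives $0\le\tfrac1{2m}\|\nabla\Qcn\|_{L^2}^2-T_n\le Cc_n^{-2}\|\Qcn\|_{\dot H^2}^2\to0$.

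Equipped with these bounds, I would extract (along a subsequence) $\Qcn\weakto Q_*$ in $H^1(\RR^3)$; the compact radial embedding $\Honer\hookrightarrow L^p(\RR^3)$, $2<p<6$, upgrades this to strong convergence in $L^{12/5}$, so $D(\Qcn)\to D(Q_*)$. Since $(\Tmcn-mc_n^2)\varphi\to-\tfrac1{2m}\Delta\varphi$ in $L^2$ for $\varphi\in C_c^\infty$ and, on a further subsequence, $\lambda_n\to\lambda$, passing to the limit in the Euler--Lagrange equation shows that $Q_*\ge0$ is a radial, symmetric-decreasing (as an a.e.\ limit of such) distributional solution of $-\tfrac1{2m}\Delta Q_*+\lambda Q_*=(|x|^{-1}\ast|Q_*|^2)Q_*$ with $\lambda\ge\lambda_0>0$ and $Q_*\not\equiv0$ (as $D(Q_*)\ge-2E_{\mathrm{nr}}(N)>0$). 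Because the Fourier symbol of $\Tmcn-mc_n^2$ increases monotonically to $|\xi|^2/(2m)$, lower semicontinuity gives $\liminf_nT_n\ge\tfrac1{2m}\|\nabla Q_*\|_{L^2}^2$, so that $\Enr(Q_*)\le\liminf_n(\Encn(\Qcn)-mc_n^2N)\le E_{\mathrm{nr}}(N)$; since $\int|Q_*|^2\le N$ and $E_{\mathrm{nr}}$ is strictly decreasing (by scaling, $E_{\mathrm{nr}}(N)=-\kappa N^3$), this forces $\int|Q_*|^2=N$ and $Q_*$ to be a minimiser of $E_{\mathrm{nr}}(N)$, hence $Q_*=\Qinf$ by uniqueness of the symmetric-decreasing minimiser (Lieb's theorem, resp.\ Appendix A), with $\lambda$ the corresponding multiplier. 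All inequalities above being then equalities forces $\Encn(\Qcn)-mc_n^2N\to E_{\mathrm{nr}}(N)$ and $T_n\to\tfrac1{2m}\|\nabla\Qinf\|_{L^2}^2$; combined with $\tfrac1{2m}\|\nabla\Qcn\|_{L^2}^2-T_n\to0$ and $\|\Qcn\|_{L^2}^2=N=\|\Qinf\|_{L^2}^2$ this gives $\|\Qcn\|_{H^1}\to\|\Qinf\|_{H^1}$, which together with $\Qcn\weakto\Qinf$ yields strong convergence $\Qcn\to\Qinf$ in $H^1(\RR^3)$; and $\lambda_n=(D(\Qcn)-T_n)/N\to(D(\Qinf)-\tfrac1{2m}\|\nabla\Qinf\|_{L^2}^2)/N=\lambda$, i.e.\ $-\mu_{c_n}-mc_n^2\to-\lambda$ (using the virial identity for (\ref{eq:lieb})). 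Since the limit is independent of the subsequence, the full sequence converges.

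I expect the one genuinely hard step to be the uniform $H^1/H^2$ bound in the second paragraph. Because the relativistic kinetic symbol grows only linearly at high frequencies, the a priori energy bound confines $\Qcn$ merely to $\dot H^{1/2}$ there, and recovering full $H^1$-regularity (and hence the strong $H^1$-convergence) requires exploiting the Euler--Lagrange equation — most notably the \emph{uniform positivity} $\lambda_n\ge\lambda_0>0$ of the shifted multiplier, which itself rests on $E_{\mathrm{nr}}(N)<0$. Everything else reduces to weak-convergence bookkeeping and the known uniqueness and scaling properties of $\Qinf$.
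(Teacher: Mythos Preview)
Your argument is correct and follows the same overall architecture as the paper's (uniform bounds $\to$ weak limit $\to$ identification via the Euler--Lagrange equation $\to$ upgrade to strong $H^1$), but the technical implementation differs at every step.

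For the uniform $H^1$ bound the paper is much shorter: it squares the Euler--Lagrange equation to get $c_n^2\|\nabla\Qcn\|_{L^2}^2+m^2c_n^4N\le\mu_{c_n}^2N+2|\mu_{c_n}|D(\Qcn)+\|V_n\Qcn\|_{L^2}^2$, bounds $\|V_n\|_{L^\infty}\lesssim\|\Qcn\|_{L^2}\|\nabla\Qcn\|_{L^2}$ via Kato's inequality, and absorbs using the factor $c_n^2$ on the left; no frequency splitting or bootstrap is needed. Conversely, the paper pays for this economy elsewhere: to bound $-\mu_{c_n}-mc_n^2$ from below it invokes Newton's theorem and Herbst's explicit formula for $\inf\sigma(\sqrt{-c^2\Delta+m^2c^4}-N/|x|)$, whereas your route (via $T_n<\tfrac12D(\Qcn)$ and $\tfrac12D(\Qcn)\ge-E_{\mathrm{nr}}(N)$) is entirely elementary. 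For the mass identification $\int|Q_*|^2=N$ the paper uses the Euler--Lagrange identity to get $-\lambda N\ge-\lambda\int|Q_*|^2$ (hence $\int|Q_*|^2\ge N$), while you use $\Enr(Q_*)\le E_{\mathrm{nr}}(N)$ together with the scaling law $E_{\mathrm{nr}}(N)=-\kappa N^3$; both are equally short. Finally, for the strong $H^1$ convergence the paper shows $\{\Qcn\}$ is a minimising sequence for $\Enr$ and quotes concentration--compactness, whereas you exploit the extra (partial) $\dot H^2$ control from your bootstrap to prove $\tfrac1{2m}\|\nabla\Qcn\|^2-T_n\to0$ and hence norm convergence directly. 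Your approach is thus more self-contained (no Herbst, no concentration--compactness black box) at the cost of the somewhat longer frequency-localised bootstrap; the paper's is the reverse.
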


\begin{remarks*}
{\em 1) A similar result for the nonrelativistic limit of ground states (and excited states) solving the Dirac-Fock equations can be found in \cite{EstebanSere2001}. However, unlike the Dirac-Fock and Hartree-Fock energy functionals in atomic physics treated in \cite{EstebanSere2001}, the energy functional in (\ref{eq:Enc}) is not weakly lower semicontinuous due to its attractive potential term. Therefore, an a-priori bound on the sequence of Lagrange multipliers $\mu_{c_n}$ (away from the essential spectrum of the limiting equation) is not sufficient to conclude strong convergence. To deal with this, we also have to use the radial symmetry of the $\Qcn$ in order to prove strong convergence. 

2) The uniqueness of the symmetric-decreasing ground state for problem (\ref{eq:Enr}) was proven by Lieb in \cite{Lieb1977}. For the reader's convenience, we provide a (partly different) proof of this fact in Appendix A. }
\end{remarks*}

\subsection{Proof of Proposition \ref{prop:nrlimit}}

We begin with some auxiliary results.

\begin{lemma} \label{lemma:mubound}
Let $\{ \mu_{c_n} \}_{n=1}^\infty$ be as in Proposition \ref{prop:nrlimit}. Then there exist constants $\delta_1 > 0$ and $\delta_2 > 0$ such that
\[
m c_n^2 - \delta_1 \leq -\mu_{c_n} \leq m c_n^2 - \delta_2, \quad \mbox{for all $n \geq n_0$},
\]
where $n_0 \gg 1$ is some number.
\end{lemma}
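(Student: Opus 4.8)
\textbf{Proof proposal for Lemma \ref{lemma:mubound}.}

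The plan is to extract both bounds on $-\mu_{c_n}$ from the Euler--Lagrange equation satisfied by $\Qcn$, tested against $\Qcn$ itself, together with quantitative control on the energy $\Encn(\Qcn)$ coming from the existence theory (Theorem \ref{th:intro}, Lemma \ref{lemma:equivalence}) and the known behavior of the nonrelativistic problem (\ref{eq:Enr}). Multiplying equation (\ref{eq:bs}) (in its $c$-dependent form) by $\overline{\Qcn}$ and integrating gives
\begin{equation*}
-\mu_{c_n} N = \int_{\RR^3} \overline{\Qcn} \, \Tmcn \Qcn - \int_{\RR^3} \big( |x|^{-1} \ast |\Qcn|^2 \big) |\Qcn|^2 = \Encn(\Qcn) - \frac{1}{2} \int_{\RR^3} \big( |x|^{-1} \ast |\Qcn|^2 \big) |\Qcn|^2 ,
\end{equation*}
so that $-\mu_{c_n} = N^{-1}\big( 2\Encn(\Qcn) - T_{c_n}(\Qcn) \big)$ where $T_{c_n}(\Qcn) := \int \overline{\Qcn}\,\Tmcn \Qcn$ is the (positive) kinetic part. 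The point is that $T_{c_n}(\Qcn) \geq m c_n^2 \, N$, since $\sqrt{-c_n^2\Delta + m^2 c_n^4} \geq m c_n^2$ as a Fourier multiplier, with equality only in the limit of zero momentum; hence $-\mu_{c_n} \leq m c_n^2 + 2\Encn(\Qcn)/N$. Thus for the \emph{upper} bound $-\mu_{c_n} \leq m c_n^2 - \delta_2$ I need to show that $\Encn(\Qcn) \leq -\delta_2 N/2 < 0$ uniformly in $n$; this is exactly the statement that the minimal energy $E_{c_n}(N)$ is bounded above by a strictly negative constant independent of $n$. This follows by using the rescaling (\ref{eq:equiv})/Lemma \ref{lemma:equivalence} together with a fixed trial function: plugging a suitable fixed $\psi_0 \in C_c^\infty$ (e.g.\ a rescaled copy of $\Qinf$) into $\Encn$ and expanding $\sqrt{-c_n^2\Delta + m^2 c_n^4} - m c_n^2 \to -\frac{1}{2m}\Delta$ strongly on nice functions as $c_n \to \infty$, so that $\Encn(\psi_0) - mc_n^2 \int|\psi_0|^2 \to \Enr(\psi_0) < 0$ for an appropriately scaled $\psi_0$; by minimality $E_{c_n}(N) \leq \Encn(\psi_0)$, giving $\Encn(\Qcn) - mc_n^2 N \leq -\delta_2 N$ for $n$ large.

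For the \emph{lower} bound $-\mu_{c_n} \geq m c_n^2 - \delta_1$, equivalently $\Encn(\Qcn) - \tfrac12 \int (|x|^{-1}\ast|\Qcn|^2)|\Qcn|^2 \geq -\delta_1 N$, I rewrite it as $-\mu_{c_n} - mc_n^2 = N^{-1}\big( 2(\Encn(\Qcn) - mc_n^2 N) - (T_{c_n}(\Qcn) - mc_n^2 N) \big)$ and observe that $\Encn(\Qcn) - mc_n^2 N = E_{c_n}(N) - mc_n^2 N$ is bounded below: indeed, for $0 < N < N_*$ fixed and $c_n$ large we have $N < c_n N_*$ so the ground state exists, and the a priori estimates for $E_{\mathrm{nr}}(N)$ together with the operator inequality $\sqrt{-c^2\Delta+m^2c^4} - mc^2 \leq -\frac{1}{2m}\Delta$ (which is elementary in Fourier space, since $\sqrt{p^2 + m^2c^2} - mc \leq p^2/(2mc)$, hence $\sqrt{c^2p^2 + m^2c^4}-mc^2 \le p^2/(2m)$) give $\Encn(\psi) - mc_n^2\int|\psi|^2 \geq \Enr(\psi) \geq E_{\mathrm{nr}}(N)$ for all admissible $\psi$, so $E_{c_n}(N) - mc_n^2 N \geq E_{\mathrm{nr}}(N) > -\infty$; it remains to bound $T_{c_n}(\Qcn) - mc_n^2 N$ from above, which follows from $T_{c_n}(\Qcn) - m c_n^2 N = 2(\Encn(\Qcn) - mc_n^2 N) + \int(|x|^{-1}\ast|\Qcn|^2)|\Qcn|^2 - \ldots$; more directly, $0 \le T_{c_n}(\Qcn) - mc_n^2 N \le \int |\nabla \Qcn|^2/(2m)$ by the operator inequality just mentioned, and a uniform $H^1$-bound on $\Qcn$ — which one gets by combining the upper bound on the energy with the Hardy--Kato inequality $\int(|x|^{-1}\ast|\Qcn|^2)|\Qcn|^2 \le C\|\Qcn\|_{H^{1/2}}^3 \|\Qcn\|_{L^2}$ to absorb the potential term — closes the estimate.

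The main obstacle is the uniform $H^1$-bound on $\Qcn$ needed in the last step: unlike in the weakly lower semicontinuous setting, the attractive potential term can a priori soak up kinetic energy, so one must argue carefully that $E_{c_n}(N) - mc_n^2 N$ staying bounded (between $E_{\mathrm{nr}}(N)$ and $-\delta_2 N$) forces $\int|\nabla\Qcn|^2$ to stay bounded. This is done via the nonrelativistic energy identity $\Encn(\Qcn) - mc_n^2 N \geq \frac{1}{2m}\int|\nabla\Qcn|^2 - \frac12\int(|x|^{-1}\ast|\Qcn|^2)|\Qcn|^2$ and then estimating the potential term by the Hardy--Littlewood--Sobolev / Kato inequality, using that $\|\Qcn\|_{L^2}^2 = N$ is small (this is where $0 < N \ll 1$, translated from $c_n \to \infty$ by the scaling of Lemma \ref{lemma:equivalence}, enters to make the potential term a small fraction of the kinetic term, giving coercivity). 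Once the $H^1$-bound is in hand, all of the above estimates are uniform in $n$, and choosing $\delta_1, \delta_2 > 0$ appropriately and $n_0$ large completes the proof.
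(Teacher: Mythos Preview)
Your upper bound is essentially the paper's argument: the identity $-\mu_{c_n} N = \Encn(\Qcn) - \tfrac12\int(|x|^{-1}\ast|\Qcn|^2)|\Qcn|^2$, together with the operator inequality $\sqrt{-c^2\Delta+m^2c^4}\le -\tfrac{1}{2m}\Delta+mc^2$ and the minimizing property of $\Qcn$, gives $-\mu_{c_n} N \le \Encn(\Qcn)\le E_{\mathrm{nr}}(N)+mc_n^2 N$, hence $\delta_2=-E_{\mathrm{nr}}(N)/N>0$. (Your intermediate arithmetic ``$-\mu_{c_n}\le mc_n^2+2\Encn/N$'' has a sign slip, but the end estimate $\Encn(\Qcn)-mc_n^2 N\le -\delta N$ is the right one and the conclusion stands.)

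The lower bound, however, contains a genuine gap. You invoke the operator inequality $\sqrt{-c^2\Delta+m^2c^4}-mc^2\le -\tfrac{1}{2m}\Delta$ to conclude $\Encn(\psi)-mc_n^2\!\int|\psi|^2\ge \Enr(\psi)$, but this inequality goes the \emph{other} way: the relativistic kinetic energy sits \emph{below} the nonrelativistic one, so what you actually get is $\Encn(\psi)-mc_n^2\!\int|\psi|^2\le \Enr(\psi)$. This kills both your lower bound on $\Encn(\Qcn)-mc_n^2 N$ and your proposed route to the uniform $H^1$-bound (which, in any case, the paper proves as Lemma~\ref{lemma:K} \emph{after} and \emph{using} Lemma~\ref{lemma:mubound}, so invoking it here would be circular). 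Also note that in Proposition~\ref{prop:nrlimit} the parameter $N>0$ is fixed and arbitrary; your appeal to ``$N$ small'' to gain coercivity is a confusion with the rescaling of Lemma~\ref{lemma:equivalence}.

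The paper obtains the lower bound by a completely different and much cleaner route: since $\Qcn$ is radial with $\|\Qcn\|_{L^2}^2=N$, Newton's theorem gives the pointwise bound $(|x|^{-1}\ast|\Qcn|^2)(x)\le N/|x|$, so $-\mu_{c_n}=\inf\sigma(H_{c_n})\ge \inf\sigma\big(\sqrt{-c_n^2\Delta+m^2c_n^4}-N/|x|\big)$. Herbst's explicit formula for the relativistic Coulomb Hamiltonian then yields $-\mu_{c_n}\ge mc_n^2\sqrt{1-(\pi N/2c_n)^2}\ge mc_n^2-\tfrac14 m\pi^2 N^2$ for $n$ large, so one may take $\delta_1=\tfrac14 m\pi^2 N^2$. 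No $H^1$-bound or energy comparison is needed for this half.
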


\begin{proof}
The existence of $\delta_2  > 0$ can be deduced as follows. The Euler-Lagrange equation for $\Qcn$ reads 
\begin{equation} \label{eq:ELc}
\sqrt{-c_n^2 \Delta + m^2 c_n^4} \, \Qcn - \big (|x|^{-1} \ast |\Qcn|^2 \big ) \Qcn = -\mu_{c_n} \Qcn,
\end{equation}
which upon multiplication with $\Qcn$ and integration gives us
\begin{equation} \label{eq:muu1}
 \Encn (\Qcn) - \frac{1}{2} \int_{\RR^3} \big ( |x|^{-1} \ast | \Qcn |^2 \big ) |\Qcn|^2  = - \mu_{c_n} N. 
\end{equation}
Next, we recall the operator inequality 
\[
\sqrt{-c^2 \Delta + m^2 c^4} \leq -\frac{1}{2m} \Delta + m c^2,
\]
which directly follows in the Fourier domain and the fact that $\sqrt{1+t} \leq \frac{t}{2} + 1$ holds for all $t \geq 0$. Hence we have that $\Encn(\Qcn) \leq \Enr(\Qcn) + N m c_n^2$. Furthermore, since $\Qcn$ is a ground state for $\Encn(\psi)$, we deduce 
\[ \Encn(\Qcn) \leq E_{\mathrm{nr}}(N) + N m c_n^2 , \]
with $E_{\mathrm{nr}}(N)$ defined in (\ref{eq:Enr}), so that (\ref{eq:muu1}) gives us
\[
-\mu_{c_n} N \leq E_{\mathrm{nr}}(N) +  N m c_{n}^2 .
\]
From \cite{Lieb1977} we know that $E_{\mathrm{nr}}(N) < 0$ and thus $\delta_2 = -E_{\mathrm{nr}}(N)/N > 0$ is a legitimate choice.

To prove the existence of $\delta_1 > 0$, we observe that each $\Qcn \geq 0$ is the ground state of the ``relativistic'' Schr\"odinger operator
\[
H_{c_n} = \sqrt{ -c_n^2 \Delta + m^2 c_n^4} - \big ( |x|^{-1} \ast |\Qcn|^2 \big ) .
\]
Since all $\Qcn$ are radial functions with $\| \Qcn \|_{L^2}^2 = N$ for all $n \geq 1$, we can invoke Newton's theorem to find
\[
\int_{\RR^3} \frac{Ê|\Qcn(y)|^2}{|x-y|} \, dy \leq \frac{N}{|x|} .
\]
By the min-max principle, we infer the the lower bound
\[
-\mu_{c_n} \geq \inf \sigma ( \overline{H}_{c_n} )
\]
where 
\[
\overline{H}_{c_n} = \sqrt{-c_n^2 \Delta + m^2 c_n^4} - \frac{N}{|x|} .
\]
From \cite{Herbst1977} and reinstalling the speed of light $c > 0$ there, we recall that we have $\inf \sigma( \overline{H}_{c_n} ) > -\infty$ if and only if $N < \frac{2}{\pi} c_n$. Thus $\overline{H}_{c_n}$ is bounded below for $n \gg 1$ and, moreover, we have an explicit lower bound (see \cite{Herbst1977} again) given by
\[
\inf \sigma( \overline{H}_{c_n} ) \geq m c_n^2 \sqrt{1 - \left ( \frac{\pi N}{2 c_n} \right )^2 } .
\]
Since $\sqrt{1-x^2} \geq 1-x^2$ for $|x| \leq 1$, we conclude
\[
-\mu_{c_n} \geq m c_n^2 \Big ( 1 -  \big ( \frac{\pi N}{2 c_n} \big )^2 \Big ) = mc_n^2 - \frac{1}{4} m \pi^2 N^2, \quad \mbox{for all $n \geq n_0$},
\]
provided that $n_0 \gg 1$. By choosing $\delta_1 = \frac{1}{4} m  \pi^2 N^2 > 0$, we complete the proof of Lemma \ref{lemma:mubound}. \end{proof}

Next, we derive an a-priori bound on the sequence of ground states.

\begin{lemma} \label{lemma:K}
Let $\{ \Qcn \}_{n=1}^\infty$ be as in Proposition \ref{prop:nrlimit}. Then there exists a constant $M > 0$ such that
\[
\| \Qcn \|_{H^1} \leq M, \quad \mbox{for all $n \geq 1$}.
\]
\end{lemma}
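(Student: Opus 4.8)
The plan is to derive the $H^1$-bound from three pieces of information: the $L^2$-constraint (which is fixed at $N$ by assumption), the upper bound on the energy $\Encn(\Qcn)$ coming from the fact that each $\Qcn$ is a ground state, and the lower bound on the Lagrange multiplier from Lemma~\ref{lemma:mubound}. The key identity is the one already recorded in the proof of Lemma~\ref{lemma:mubound}: multiplying the Euler--Lagrange equation (\ref{eq:ELc}) by $\Qcn$ and integrating gives
\[
\langle \Qcn, \sqrt{-c_n^2\Delta + m^2 c_n^4}\, \Qcn \rangle - \int_{\RR^3} \big(|x|^{-1}\ast|\Qcn|^2\big)|\Qcn|^2 = -\mu_{c_n} N,
\]
while the energy itself reads $\Encn(\Qcn) = \langle \Qcn, \sqrt{-c_n^2\Delta+m^2c_n^4}\,\Qcn\rangle - \tfrac12\int(|x|^{-1}\ast|\Qcn|^2)|\Qcn|^2$. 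Subtracting, one obtains
\[
\langle \Qcn, \sqrt{-c_n^2\Delta + m^2 c_n^4}\, \Qcn \rangle = 2\Encn(\Qcn) + \mu_{c_n} N,
\]
or equivalently $\tfrac12\int(|x|^{-1}\ast|\Qcn|^2)|\Qcn|^2 = \Encn(\Qcn) + \mu_{c_n}N$.

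First I would extract the kinetic-energy bound. From Lemma~\ref{lemma:mubound} we have $-\mu_{c_n} \geq mc_n^2 - \delta_1$, hence $\mu_{c_n} \leq -mc_n^2 + \delta_1$, so
\[
\langle \Qcn, \sqrt{-c_n^2\Delta + m^2 c_n^4}\,\Qcn\rangle \leq 2\Encn(\Qcn) - mc_n^2 N + \delta_1 N.
\]
Using $\Encn(\Qcn) \leq E_{\mathrm{nr}}(N) + Nmc_n^2$ (established in the proof of Lemma~\ref{lemma:mubound} via the operator inequality $\sqrt{-c^2\Delta+m^2c^4}\leq -\tfrac{1}{2m}\Delta + mc^2$ together with $\Encn$ being a ground-state energy), the $mc_n^2 N$ terms combine favorably: the right-hand side is at most $2E_{\mathrm{nr}}(N) + mc_n^2 N + \delta_1 N$. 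Then I use the pointwise-in-Fourier lower bound $\sqrt{c_n^2\xi^2 + m^2c_n^4} - mc_n^2 \geq \tfrac{1}{2m}\xi^2 \cdot \tfrac{1}{1 + \xi^2/(2mc_n^2)}$, or more simply the elementary inequality $\sqrt{c^2\xi^2+m^2c^4} \geq mc^2 + \tfrac{\xi^2}{2m} - \tfrac{\xi^4}{8m^3c^2}$; for $c_n$ large this is awkward because of the quartic error term. A cleaner route is: for $c_n \geq 1$ one has the uniform bound $\sqrt{c_n^2\xi^2 + m^2c_n^4} - mc_n^2 \geq c_\star \min(\xi^2, |\xi|)$ for some $c_\star = c_\star(m) > 0$ independent of $n$, so that
\[
c_\star \int_{\RR^3} \min(\xi^2,|\xi|)\,|\widehat{\Qcn}(\xi)|^2\,d\xi \leq \langle \Qcn, \big(\sqrt{-c_n^2\Delta+m^2c_n^4} - mc_n^2\big)\Qcn\rangle \leq 2E_{\mathrm{nr}}(N) + \delta_1 N.
\]
Combined with $\|\Qcn\|_{L^2}^2 = N$ this controls $\int(1+\xi^2)|\widehat{\Qcn}|^2$, i.e. $\|\Qcn\|_{H^1}^2$, by a constant depending only on $m$ and $N$. (Note $2E_{\mathrm{nr}}(N) < 0$, so the right side is in fact bounded by $\delta_1 N$.)

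The main obstacle is purely that of book-keeping the large $mc_n^2$ terms so that they cancel exactly rather than blow up: the kinetic energy $\langle \Qcn, \sqrt{-c_n^2\Delta+m^2c_n^4}\,\Qcn\rangle$ is itself of order $mc_n^2 N \to \infty$, and it is only the \emph{subtracted} quantity $\langle \Qcn, (\sqrt{-c_n^2\Delta+m^2c_n^4} - mc_n^2)\Qcn\rangle$ that stays bounded, which is why combining the energy upper bound with the sharp lower bound $-\mu_{c_n} \geq mc_n^2 - \delta_1$ from Lemma~\ref{lemma:mubound} is essential --- a crude bound on $\mu_{c_n}$ would not suffice. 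One must also check the nonrelativistic symbol inequality $\sqrt{c^2\xi^2 + m^2c^4} - mc^2 \geq c_\star\min(\xi^2,|\xi|)$ uniformly for $c \geq 1$, which follows by splitting into $|\xi| \leq mc$ (where the left side is comparable to $\xi^2/m$) and $|\xi| \geq mc$ (where it is comparable to $c|\xi| \geq |\xi|$); this is the only genuinely new elementary estimate needed, and it is routine. The rest is assembling the chain of inequalities and reading off $M = M(m,N)$.
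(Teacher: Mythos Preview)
Your chain of identities is correct up through
\[
\big\langle \Qcn,\big(\sqrt{-c_n^2\Delta+m^2c_n^4}-mc_n^2\big)\Qcn\big\rangle \;\le\; 2E_{\mathrm{nr}}(N)+\delta_1 N,
\]
but the final step fails: a uniform bound on this quadratic form does \emph{not} give a uniform $H^1$ bound. Your own symbol inequality makes the issue explicit. You write $\sqrt{c^2\xi^2+m^2c^4}-mc^2\ge c_\star\min(\xi^2,|\xi|)$, and for $|\xi|\ge 1$ one has $\min(\xi^2,|\xi|)=|\xi|$, which is the $H^{1/2}$ weight, not the $H^1$ weight. There is no uniform (in $c$) lower bound of the form $\sqrt{c^2\xi^2+m^2c^4}-mc^2\ge A\,\xi^2$: for $|\xi|\gg mc$ the left side is $\sim c|\xi|$, which is $\ll \xi^2$. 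So your argument produces only $\sup_n\|\Qcn\|_{H^{1/2}}<\infty$, and the claimed control of $\int(1+\xi^2)|\widehat{\Qcn}|^2$ does not follow. (Concretely: a sequence concentrating Fourier mass on a thin shell at $|\xi|\sim R_n\to\infty$ can keep both $\|\cdot\|_{L^2}$ and $\int|\xi|\,|\widehat{\cdot}|^2$ bounded while $\int\xi^2|\widehat{\cdot}|^2\to\infty$.)

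The paper circumvents this by working one order higher: instead of pairing the Euler--Lagrange equation with $\Qcn$, it computes the $L^2$ norm of $\sqrt{-c_n^2\Delta+m^2c_n^4}\,\Qcn$ directly from the equation, which gives the \emph{exact} identity
\[
c_n^2\|\nabla\Qcn\|_{L^2}^2+m^2c_n^4 N=\big\|\sqrt{-c_n^2\Delta+m^2c_n^4}\,\Qcn\big\|_{L^2}^2=\big\|{-}\mu_{c_n}\Qcn+(|x|^{-1}\ast|\Qcn|^2)\Qcn\big\|_{L^2}^2.
\]
The right side is then estimated using Kato's inequality $\||x|^{-1}\ast|\Qcn|^2\|_{L^\infty}\lesssim\|\Qcn\|_{L^2}\|\nabla\Qcn\|_{L^2}$ and $|\mu_{c_n}|\le mc_n^2$; the $m^2c_n^4N$ terms cancel, and one obtains $c_n^2\|\nabla\Qcn\|_{L^2}^2\lesssim mc_n^2 N^{3/2}\|\nabla\Qcn\|_{L^2}+N^2\|\nabla\Qcn\|_{L^2}^2$, which for $c_n$ large yields $\|\nabla\Qcn\|_{L^2}\lesssim 1$. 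The point is that squaring the pseudodifferential operator turns it into the genuine second-order symbol $c_n^2\xi^2+m^2c_n^4$, so the $H^1$ norm appears exactly rather than through a lossy symbol comparison.
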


\begin{proof}
Since $\| \Qcn \|_{L^2}^2 = N$ for all $n \geq 1$, we only have to derive a uniform bound for $\| \nabla \Qcn \|_{L^2}$ which can be done as follows. From equation (\ref{eq:ELc}) we obtain
\begin{align*}
c_n^2 \| \nabla \Qcn \|_{L^2}^2 + m^2 c_n^4 \| \Qcn \|_{L^2}^2 & = \langle \Tmcn \Qcn, \Tmcn \Qcn \rangle \\ \nonumber
& \leq \mu_{c_n}^2  \langle \Qcn, \Qcn \rangle  + 2 | \mu_{c_n}| \langle \Qcn, ( |x|^{-1} \ast |\Qcn|^2 ) \Qcn \rangle \\ \nonumber
& \quad + \langle \Qcn (|x|^{-1} \ast |\Qcn|^2), (|x|^{-1} \ast |\Qcn|^2) \Qcn \rangle . \label{eq:QcnH1bound}
\end{align*}
To bound the terms on the right side, we notice that Kato's inequality $|x|^{-1} \lesssim |\nabla|$ implies
\[
\| |x|^{-1} \ast |\Qcn|^2 \|_{L^\infty} \lesssim \langle \Qcn, |\nabla| \Qcn \rangle \lesssim \| \Qcn \|_{L^2} \| \nabla \Qcn \|_{L^2} .
\]
Using this bound, H\"older's inequality, and the bound $|\mu_{c_n}| \leq m c_n^2$ for $n \gg 1$ from Lemma \ref{lemma:mubound}, we obtain
\[
 c_n^2 \| \nabla \Qcn \|_{L^2}^2 \lesssim m c_n^2 N^{3/2} \| \nabla \Qcn \|_{L^2} + N^2 \| \nabla \Qcn \|_{L^2}^2 ,
\]
for $n \gg 1$. Since $c_n \to \infty$ and $N$ is fixed, we conclude that there exists $M > 0$ such that 
\[
\| \nabla \Qcn \|_{L^2} \leq M 
\]
for $n \gg 1$. By choosing $M > 0$ possibly larger, we extend this bound to all $n \geq 1$. \end{proof}

We now come the proof of Proposition \ref{prop:nrlimit} itself. By the a-priori bound in Lemma \ref{lemma:K}, we have (after possibly passing to a subsequence) that
\[
\mbox{$\Qcn \weakto \Qinf$ in $H^1(\RR^3)$ and $\Qcn(x) \to \Qinf(x)$ for a.\,e.~$x \in \RR^3$ as $n \to \infty$},
\]
for some $\Qinf \in H^1(\RR^3)$. By radiality and strict positivity of all the $\Qcn$, it follows that $\Qinf(|x|) \geq 0$ is a radial and nonnegative function. Furthermore, since $\{ \Qcn \}_{n=1}^\infty$ forms a sequence of radial functions on $\RR^3$ with a uniform $H^1$-bound, a classical result (see  \cite{Strauss1977}) yields that
\begin{equation} \label{eq:strauss}
\mbox{$\Qcn \to \Qinf$ in $L^p(\RR^3)$ as $n \to \infty$ for any $2 < p < 6$.}
\end{equation}
By Lemma \ref{lemma:mubound}, we have that $\{ -\mu_{c_n}- m c_n^2 \}_{n=1}^\infty$ is a bounded sequence, which is also uniformly bounded away from 0. Hence extracting a suitable subsequence yields
\begin{equation} \label{eq:mulimit}
\lim_{n \to \infty} (-\mu_{c_n} - m c_n^2) = -\lambda < 0,
\end{equation}
for some $\lambda > 0$. 

Using that $\Qcn \weakto \Qinf$ in $H^1$ and the strong convergence (\ref{eq:strauss}), we can pass to the limit in equation (\ref{eq:ELc}) and find that the radial, nonnegative function $\Qinf \in H^1(\RR^3)$ satisfies
\begin{equation} \label{eq:bsc2}
-\frac{1}{2m} \Delta \Qinf - \big ( |x|^{-1} \ast |\Qinf|^2  \big ) \Qinf = - \lambda \Qinf \quad \mbox{in $H^{-1}(\RR^3)$} .
\end{equation}
When taking this limit, we use the fact that 
\[
\lim_{n \rightarrow \infty} \big \langle f, \big [ \sqrt{-c_n^2 \Delta + m^2 c_n^4} - m c_n^2+ \frac{1}{2m} \Delta \big ] \Qcn \big \rangle = 0  \quad \mbox{for all $f \in H^1(\RR^3)$},
\]
which is easy to verify for test functions $f \in C_0^\infty(\RR^3)$ by taking the Fourier transform and using that
\[
 \sqrt{c_n^2 \xi^2 + m^2 c_n^4} - m c_n^2 - \frac{\xi^2}{2m} \to 0 \quad  \mbox{for every $\xi \in \RR^3$ as $c_n \to \infty$}.
\]
The claim above extends to all $f \in H^1(\RR^3)$ by a simple density argument.

Next we prove that in fact $\int |\Qinf|^2 = N$ holds, which a-posteriori would show that $\Qcn \to \Qinf$ strongly in $L^2(\RR^3)$. To prove this claim, we note that equation (\ref{eq:ELc}) and its limit (\ref{eq:bsc2}) give us
\begin{equation} \label{eq:liminf1}
(-\mu_{c_n} - m c_n^2) N = \frac{1}{2m} \int_{\RR^3} |\nabla \Qcn|^2 - \int_{\RR^3} \big ( |x|^{-1} \ast |\Qcn|^2 \big ) |\Qcn|^2 + r_n,
\end{equation}
with $r_n \rightarrow 0$ as $n \to \infty$. Note that the right-hand side is not weakly lower semicontinuous (with respect to weak $H^1$-convergence) unlike the case of atomic Hartree and Hartree-Fock energy functionals. To deal with the non weakly lower semicontinuous part given by the potential energy term, we use (\ref{eq:strauss}) again and the Hardy-Littlewood-Sobolev inequality. Then, by the weak lower semicontinuity of the kinetic energy term in (\ref{eq:liminf1}), we deduce from (\ref{eq:liminf1}) and equation (\ref{eq:bsc2}) that
\[
-\lambda N \geq  \frac{1}{2m} \int_{\RR^3} |\nabla \Qinf|^2 - \int_{\RR^3} \big ( |x|^{-1} \ast |\Qinf|^2 \big ) |\Qinf|^2 = -\lambda \int_{\RR^3} |\Qinf|^2 .
\]
Because of $\lambda > 0$, we see that $\int |\Qinf|^2  \geq N$ must hold. On the other hand, we have $N \geq \int |\Qinf|^2$ by the weak $L^2$-convergence. Thus we have $\int |\Qinf|^2 = N$ and, consequently,
\begin{equation}
\mbox{$\Qcn \to \Qinf$ in $L^2(\RR^3)$ as $n \to \infty$.}
\end{equation}
By Lemma \ref{lemma:lieb} and a simple scaling argument, we see that $\Qinf$ is the unique radial, nonnegative solution to equation (\ref{eq:bsc2}) with $\int |\Qinf|^2 = N$. Here $\lambda > 0$ is determined through $\Qinf$, and $\Qinf$ is in fact strictly positive.

It remains to show that
\begin{equation} \label{eq:H1conv}
\mbox{$\Qcn \to \Qinf$ in $H^1(\RR^3)$ as $n \to \infty$} .
\end{equation}
To see this, we verify that $\{ \Qcn \}_{n=1}^\infty$ with $\int |\Qcn|^2 = N$ furnishes a minimizing sequence for the nonrelativistic Hartree energy $\Enr(\psi)$ subject to $\int |\psi|^2 = N$, \ie, for problem (\ref{eq:Enr}). Indeed, using (\ref{eq:liminf1}) and (\ref{eq:mulimit}) as well as the strong convergence (\ref{eq:strauss}) to pass to the limit in the potential energy, we deduce that
$$
\Enr(\Qcn) \to -\lambda N + \frac{1}{2} \int_{\RR^3} \big ( |x|^{-1} \ast |Q_\infty|^2 \big ) |\Qinf|^2 \quad \mbox{as} \quad n \to \infty.
$$
However, this limit for $\Enr(\Qcn)$ is equal to $\Enr(Q_\infty)$, as can be seen by multiplying equation (\ref{eq:bsc2}) with $\Qinf$ and integrating. Hence $\{ \Qcn \}_{n=1}^\infty$ is a minimizing sequence for problem (\ref{eq:Enr}). Next, we notice  that standard concentration-compactness methods yield relative compactness in $H^1(\RR^3)$ for any radial minimizing sequence for problem (\ref{eq:Enr}), which has a unique radial, nonnegative minimizer $\Qinf$. Therefore (after possibly passing to another subsequence) we deduce that (\ref{eq:H1conv}) holds.

To conclude the proof of Proposition \ref{prop:nrlimit}, we note that we have convergence along every subsequence because of the uniqueness of the limit point $\Qinf \in H^1(\RR^3)$. \hfill $\blacksquare$

\section{Radial Nondegeneracy of Nonrelativistic Ground States} \label{sec:nondegnr}

We consider the linear operator 
\begin{equation} \label{eq:Lnr}
L_{+} \xi = - \frac{1}{2m} \Delta \xi + \lambda \xi - \big ( |x|^{-1} \ast |\Qinf|^2 \big ) \xi - 2 \Qinf \big ( |x|^{-1} \ast (\Qinf \xi ) \big ) ,
\end{equation}
where $\Qinf \in H^1(\RR^3)$ is the radial, positive solution taken from Proposition \ref{prop:nrlimit}. By standard arguments, it follows that $L_+$ is a self-adjoint operator acting on $L^2(\RR^3)$ with domain $H^2(\RR^3)$. In this section, we study the restriction of $L_+$ acting on $L^2_{\mathrm{rad}}(\RR^3)$ (i.\,e., the radial $L^2$-functions on $\RR^3$). 

As a main result, we prove the so-called nondegeneracy of $L_+$ on $L^2_{\mathrm{rad}}(\RR^3)$; that is, the triviality of its kernel.

\begin{proposition} \label{prop:nondegnr}
For the linear operator $L_+$ be given by (\ref{eq:Lnr}), we have
\[
\mathrm{ker} \, L_+ = \{ 0 \} \quad \mbox{when $L_+$ is restricted to $L^2_{\mathrm{rad}}(\RR^3)$}.
\]
\end{proposition}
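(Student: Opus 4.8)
The plan is to supply, by hand, the oscillation‑type information that Sturm--Liouville theory would give for a local operator but that is unavailable here because of the nonlocal term $-2\Qinf\big(|x|^{-1}\ast(\Qinf\,\cdot)\big)$ in $L_+$: to a hypothetical radial element of $\ker L_+$ I associate a short chain of auxiliary radial functions and count their sign changes against the vanishing conditions they satisfy at $r=0$ and $r=\infty$.

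\emph{Step 1 (preliminaries and $\xi\perp\Qinf$).} Let $0\ne\xi\in\ker L_+\cap L^2_{\mathrm{rad}}(\RR^3)$. Since $|x|^{-1}\ast|\Qinf|^2$ is bounded and vanishes at infinity and $\xi\mapsto\Qinf\big(|x|^{-1}\ast(\Qinf\xi)\big)$ is $(-\Delta)$‑compact, $\sigma_{\mathrm{ess}}(L_+)=[\lambda,\infty)$ with $\lambda>0$; hence $\xi$ is a genuine $L^2$‑eigenfunction and, by elliptic regularity and Agmon‑type decay estimates, $\xi$ is smooth and exponentially decaying, with finitely many zeros in $(0,\infty)$. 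Let $k$ be the number of its sign changes. A direct computation using \eqref{eq:lieb} (which reflects the dilation covariance $\Qinf\mapsto a^{2}\Qinf(a\,\cdot)$ of the equation) gives
\[
L_+\big(2\Qinf+x\cdot\nabla\Qinf\big)=-2\lambda\,\Qinf ,
\]
and since $2\Qinf+x\cdot\nabla\Qinf$ lies in the domain of $L_+$, pairing with $\xi$ and using self‑adjointness together with $L_+\xi=0$ yields $\langle\xi,\Qinf\rangle=0$. Consequently $k\ge1$, for if $\xi$ had a fixed sign then $\langle\xi,\Qinf\rangle\ne0$ because $\Qinf>0$.

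\emph{Step 2 (the chain and the count).} Put $v:=\xi/\Qinf$, $\ \mu(r):=\int_0^r s^2(\Qinf\xi)(s)\,ds$, $\ \eta:=2\,|x|^{-1}\ast(\Qinf\xi)$, and $W(r):=r^2\Qinf(r)^2 v'(r)$. From $L_+\xi=0$ one gets $L_-\xi=\Qinf\eta$, where $L_-:=-\tfrac1{2m}\Delta+\lambda-|x|^{-1}\ast|\Qinf|^2$ satisfies $L_-\Qinf=0$ by \eqref{eq:lieb}; the ground‑state transform (write $\xi=\Qinf v$ and use $L_-\Qinf=0$) then gives $\nabla\cdot(\Qinf^2\nabla v)=-2m\,\Qinf^2\eta$, that is $W'(r)=-2m\,r^2\Qinf(r)^2\eta(r)$, while $-\Delta\eta=8\pi\,\Qinf\xi$ gives $r^2\eta'(r)=-8\pi\,\mu(r)$. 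Reading off pointwise signs on $(0,\infty)$: $v$ has the sign of $\xi$; $\mu'=r^2\Qinf\xi$ has the sign of $\xi$; $\eta'=-8\pi\mu/r^2$ has the sign of $-\mu$; $W'=-2m\,r^2\Qinf^2\eta$ has the sign of $-\eta$; $v'=W/(r^2\Qinf^2)$ has the sign of $W$. Hence, writing $\mathrm{sc}(\,\cdot\,)$ for the number of sign changes on $(0,\infty)$,
\[
\mathrm{sc}(v)=\mathrm{sc}(\mu')=k,\qquad \mathrm{sc}(\eta')=\mathrm{sc}(\mu),\qquad \mathrm{sc}(W')=\mathrm{sc}(\eta),\qquad \mathrm{sc}(v')=\mathrm{sc}(W),
\]
while $\mu(0)=\mu(\infty)=0$ (the second since $\mu(\infty)=\tfrac1{4\pi}\langle\Qinf,\xi\rangle=0$), $W(0)=W(\infty)=0$, and $\eta(\infty)=0$. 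Now between any two consecutive sign changes a function has an interior extremum and hence its derivative changes sign; counting also the endpoints at which $\mu,W$ vanish (both $0$ and $\infty$) and at which $\eta$ vanishes ($\infty$ only), one gets $\mathrm{sc}(\mu')\ge\mathrm{sc}(\mu)+1$, $\mathrm{sc}(W')\ge\mathrm{sc}(W)+1$, $\mathrm{sc}(\eta')\ge\mathrm{sc}(\eta)$, $\mathrm{sc}(v')\ge\mathrm{sc}(v)-1$. Combining with the equalities above,
\[
k-1\ \le\ \mathrm{sc}(v')=\mathrm{sc}(W)\ \le\ \mathrm{sc}(W')-1=\mathrm{sc}(\eta)-1\ \le\ \mathrm{sc}(\eta')-1=\mathrm{sc}(\mu)-1\ \le\ \mathrm{sc}(\mu')-2=k-2 ,
\]
which is absurd. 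Therefore $\ker L_+=\{0\}$ on $L^2_{\mathrm{rad}}(\RR^3)$.

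\emph{Main obstacle.} The crux is Step 2: since there is no off‑the‑shelf oscillation theorem for the nonlocal $L_+$, one has to discover exactly the right auxiliary chain --- here $v,\mu,\eta,W$ are built from $\xi$ and one returns to $v$ via the ground‑state transform and the Newton potential --- so that the Coulombic structure produces clean pointwise sign relations and the vanishing of $\mu,W$ at $0$ and $\infty$ and of $\eta$ at $\infty$ dovetail to close the count. The two facts that encode the special structure of the Hartree nonlinearity, namely $W'=-2m\,r^2\Qinf^2\eta$ and the dilation identity $L_+(2\Qinf+x\cdot\nabla\Qinf)=-2\lambda\Qinf$ (hence $\xi\perp\Qinf$), are what make this possible; the subsidiary points --- $(-\Delta)$‑compactness of the potential terms, smoothness and exponential decay of $\xi$, and finiteness of its zero set --- are routine.
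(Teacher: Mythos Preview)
Your argument is correct and takes a genuinely different route from the paper's. Both proofs exploit the dilation vector $R=2\Qinf+x\cdot\nabla\Qinf$ with $L_+R=-2\lambda\Qinf$, but for different purposes. The paper uses Newton's theorem to split the radial restriction of $L_+$ as a Volterra-type operator $\mathcal{L}_+$ plus a rank-one piece, shows via a Wronskian comparison with $\Qinf$ that any solution of $\mathcal{L}_+v=0$ with $v(0)\neq0$ grows exponentially, and then writes a hypothetical kernel element as ``homogeneous plus particular'' with the particular solution built from $R$; a short case analysis on $v(0)$ yields the contradiction. Your proof instead uses $R$ only to extract the orthogonality $\xi\perp\Qinf$, and then runs a clean nodal-counting loop through the chain $v=\xi/\Qinf\to\mu\to\eta\to W\to v'$ built from the ground-state transform $\nabla\cdot(\Qinf^2\nabla v)=-2m\,\Qinf^2\eta$ and the Coulomb identity $-\Delta\eta=8\pi\Qinf\xi$. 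The paper's approach is closer to ODE shooting and yields, as a by-product, explicit exponential growth for $\mathcal{L}_+$-solutions (which it does not otherwise need); your approach is more oscillation-theoretic and makes the role of the Newtonian kernel very transparent --- the two crucial endpoint conditions $\mu(\infty)=0$ and $W(\infty)=0$ come respectively from $\xi\perp\Qinf$ and from the exponential decay of $\xi,\Qinf$ and their derivatives. One small point worth making explicit in your write-up: the finiteness of the zero set of $\xi$ follows because for large $r$ the radial ODE reads $-\tfrac{1}{2m}\xi''-\tfrac{1}{mr}\xi'+(\lambda-V)\xi=\Qinf\eta$ with $\lambda-V\to\lambda>0$ and exponentially small right-hand side, hence is non-oscillatory near infinity; this justifies that all the sign-change counts are finite and the Rolle-type inequalities $\mathrm{sc}(\mu')\ge\mathrm{sc}(\mu)+1$, $\mathrm{sc}(W')\ge\mathrm{sc}(W)+1$, $\mathrm{sc}(\eta')\ge\mathrm{sc}(\eta)$ apply as stated.
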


\begin{remark*}
{\em 1) As shown in Section \ref{sec:proof:th:nondeg} below, we will see that the triviality of the kernel of $L_+$ on $L^2_{\mathrm{rad}}(\RR^3)$ implies 
\begin{equation}
\mathrm{ker} \, L_+ = \mathrm{span} \, \big \{ \partial_{x_1} \Qinf, \partial_{x_2} \Qinf, \partial_{x_3} \Qinf \big \} .
\end{equation}
For linearized operators arising from ground states for NLS with {\em local nonlinearities,} this fact is well-known; see \cite{CGNT2007,Weinstein1985}. However, the proof given there {\em cannot be adapted} to $L_+$ given by (\ref{eq:Lnr}) due to its nonlocal component. We refer to Section \ref{sec:proof:th:nondeg} for further details. 

2) Numerical evidence, which indicates that $0$ is not an eigenvalue of $L_+$ when restricted to radial functions, can be found in \cite{HMT2003}. }
\end{remark*}

\subsection{Proof of Proposition \ref{prop:nondegnr}}

Suppose that $\Qinf \in H^1(\RR^3)$ is the unique radial, positive solution to equation (\ref{eq:lieb}) with $\int |\Qinf|^2 = N$ for some $N >0$ given. In what follows, it will be convenient and without loss of generality to assume that $\Qinf$ satisfies
\begin{equation} \label{eq:Qinfnorm}
-\Delta \Qinf - \big ( |x|^{-1} \ast |\Qinf|^2 \big ) \Qinf = -\Qinf,
\end{equation}
which amounts to rescaling $\Qinf(x) \mapsto a \Qinf(bx)$ with suitable $a > 0$ and $b >0$. Likewise, the linear operator $L_+$ then reads
\begin{equation}
L_+ \xi = -\Delta \xi + \xi - \big (|x|^{-1} \ast |\Qinf|^2 \big ) \xi - 2 \Qinf \big ( |x|^{-1} \ast (\Qinf \xi) \big ) . 
\end{equation}

Recall that we restrict ourselves to radial $\xi \in L^2_{\mathrm{rad}}(\RR^3)$. Therefore, we can rewrite the nonlocal term in $L_+$ by invoking Newton's theorem in $\RR^3$ (see \cite[Theorem 9.7]{LiebLoss2001}): For any radial function $\rho=\rho(|x|)$ such that $ \rho \in L^1(\RR^3, (1+|x|)^{-1} dx)$, we have
\begin{equation}
-(|x|^{-1} \ast \rho)(r) = \int_0^r K(r,s) \rho(s) \, ds - \int_{\RR^3} \frac{\rho(|x|)}{|x|}  ,
\end{equation}
for $r= |x| \geq 0$, where $K(r,s)$ is given by
\begin{equation} \label{eq:K}
K(r,s) = 4 \pi s \big ( 1 - \frac{s}{r} \big ) \geq 0, \quad \mbox{for $r \geq s$} .
\end{equation}
Since the ground state $\Qinf$ is exponentially decaying, we can apply Newton's theorem to $\rho = \Qinf \xi$ for any $\xi \in L^2_{\mathrm{rad}}(\RR^3)$ and obtain the following result.

\begin{lemma} \label{lemma:LL}
For any $\xi \in L^2_{\mathrm{rad}}(\RR^3)$, we have
\begin{equation}
L_+ \xi = \mathcal{L}_+ \xi - 2 \Qinf \Big ( \int_{\RR^3} \frac{\Qinf \xi}{|x|}  \Big ) ,
\end{equation}
where $\mathcal{L}_+$ is given by
\begin{equation}
\mathcal{L}_+ \xi = -\Delta \xi + \xi - \big ( |x|^{-1} \ast |\Qinf|^2 \big ) \xi + W \xi  ,
\end{equation}
with
\begin{equation}
(W \xi)(r) = 2 \Qinf(r) \int_0^r K(r,s) \Qinf(s) \xi(s) \, ds .
\end{equation}
\end{lemma}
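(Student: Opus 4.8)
The goal is to establish Lemma~\ref{lemma:LL}, which rewrites the nonlocal operator $L_+$, restricted to radial functions, as a sum of a Schr\"odinger-type operator $\mathcal{L}_+$ with a \emph{positive} potential $W$ plus a rank-one correction term. The plan is to start from the formula for $L_+$ acting on radial $\xi$, isolate the genuinely nonlocal piece $-2\Qinf\big(|x|^{-1}\ast(\Qinf\xi)\big)$, and apply Newton's theorem (the decomposition stated just above the lemma) to the density $\rho = \Qinf\xi$.

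First I would check that Newton's theorem is applicable, i.e.\ that $\rho = \Qinf\xi \in L^1(\RR^3, (1+|x|)^{-1}\,dx)$ for every $\xi \in L^2_{\mathrm{rad}}(\RR^3)$. This follows from the exponential decay of $\Qinf$ (Theorem~\ref{th:intro}(ii), transferred to $\Qinf$ via Appendix~A / the remark after Proposition~\ref{prop:nrlimit}) together with Cauchy--Schwarz: $\int |\Qinf\xi|(1+|x|)^{-1}\,dx \le \|\Qinf\|_{L^2}\|\xi\|_{L^2} < \infty$, and near the origin the weight is bounded. Hence
\[
-\big(|x|^{-1}\ast(\Qinf\xi)\big)(r) = \int_0^r K(r,s)\,\Qinf(s)\xi(s)\,ds - \int_{\RR^3}\frac{\Qinf\xi}{|x|},
\]
with $K(r,s) = 4\pi s(1-s/r)$ as in (\ref{eq:K}).

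Next I would simply substitute this identity into $L_+\xi = -\Delta\xi + \xi - \big(|x|^{-1}\ast|\Qinf|^2\big)\xi - 2\Qinf\big(|x|^{-1}\ast(\Qinf\xi)\big)$. Multiplying the Newton decomposition by $2\Qinf(r)$ and grouping terms, the local part $-\Delta\xi + \xi - \big(|x|^{-1}\ast|\Qinf|^2\big)\xi$ together with $2\Qinf(r)\int_0^r K(r,s)\Qinf(s)\xi(s)\,ds =: (W\xi)(r)$ forms $\mathcal{L}_+\xi$, while the remaining term $-2\Qinf\big(\int_{\RR^3}\Qinf\xi/|x|\big)$ is the claimed rank-one correction. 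That $W\xi$ is well defined as an $L^2$ function, and that $\mathcal{L}_+$ is self-adjoint on $H^2(\RR^3)$ with the same domain as $L_+$, follows from $W$ being a bounded multiplication operator: indeed $0 \le (W\xi)(r) \le 2\Qinf(r)\big(\int_0^\infty 4\pi s\,\Qinf(s)|\xi(s)|\,ds\big)$ — actually one wants $W$ as an operator, so note $W$ acts as multiplication by the bounded radial function $2\Qinf(r)\cdot\big(\text{something}\big)$... here care is needed: $W$ is \emph{not} a multiplication operator since the integral depends on $\xi$. The cleaner statement is that $W$ is an integral operator with kernel $2\Qinf(r)K(r,s)\Qinf(s)\mathbf{1}_{s\le r}$, and since $K(r,s) \le 4\pi s$ and $\Qinf$ decays exponentially, this kernel is Hilbert--Schmidt, hence $W$ is bounded (in fact compact) on $L^2_{\mathrm{rad}}(\RR^3)$; thus $\mathcal{L}_+ = L_+ + (\text{bounded}) + (\text{rank one})$ is a bounded perturbation of $-\Delta + 1$ and is self-adjoint with domain $H^2$.

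The main obstacle here is not deep — it is essentially a bookkeeping identity — but the point that requires genuine care is the convergence/integrability needed to split the convolution into the two pieces of Newton's theorem (justifying the appearance of the constant $\int_{\RR^3}\Qinf\xi/|x|$, which requires $\Qinf\xi/|x| \in L^1$), and the claim that the kernel $2\Qinf(r)K(r,s)\Qinf(s)$ defines a bounded operator on $L^2_{\mathrm{rad}}$. Both are handled by the exponential decay of $\Qinf$ and elementary estimates; once these are in place the lemma follows by direct substitution and rearrangement. The real purpose of the lemma — exploited in the sequel — is that $W \ge 0$ pointwise, so that $\mathcal{L}_+$ has a favorable sign structure amenable to the comparison/oscillation arguments that replace the unavailable Sturm--Liouville theory; but that is used later, not proved here.
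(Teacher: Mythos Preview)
Your proposal is correct and follows exactly the route the paper takes: the paper's entire ``proof'' is the sentence immediately preceding the lemma, namely that exponential decay of $\Qinf$ allows one to apply Newton's theorem to $\rho=\Qinf\xi$, after which the identity is a direct substitution. Your additional remarks on the integrability of $\Qinf\xi/|x|$ and the Hilbert--Schmidt boundedness of $W$ go beyond what the paper records here (the boundedness of $W$ is only used later), but they are correct and do no harm.
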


The following auxiliary result shows exponential growth of solutions $v$ to the linear equation $\mathcal{L}_+ v = 0$.

\begin{lemma} \label{lemma:kerL}
Suppose the radial function $v=v(r)$ solves $\mathcal{L}_+ v = 0$ with $v(0) \neq 0$ and $v'(0) = 0$. Then the function $v(r)$ has no sign change and $v(r)$ grows exponentially as $r \to \infty$. More precisely, for any $0 < \delta < 1$, there exist constants $C > 0$ and $R > 0$ such that
\[
|v(r)| \geq C e^{+\delta r}, \quad \mbox{for all $r \geq R$}.
\]
In particular, we have that $v \not \in L^2_{\mathrm{rad}}(\RR^3)$.
\end{lemma}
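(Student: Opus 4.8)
The key structural observation is that, restricted to radial functions, $\mathcal{L}_+$ decomposes as $\mathcal{L}_+ = L_- + W$, where $L_-\xi = -\Delta\xi + \xi - \big(|x|^{-1}\ast|\Qinf|^2\big)\xi$ is the \emph{local} Schr\"odinger operator annihilating the positive radial function $\Qinf$ (by \eqref{eq:Qinfnorm}), and $W$ is the \emph{causal} nonlocal term from Lemma~\ref{lemma:LL}, which maps positive functions to positive functions because $K\geq 0$ (see \eqref{eq:K}) and $\Qinf>0$. The plan is to rewrite $\mathcal{L}_+v=0$ as $L_-v=-Wv$ and to compare $v$ with the reference solution $\Qinf$ of $L_-\,\cdot=0$ by a Wronskian computation. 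Multiplying $L_-v=-Wv$ by $r^2\Qinf$ and $L_-\Qinf=0$ by $r^2v$, subtracting, observing that the result is $\tfrac{d}{dr}\big[r^2(\Qinf v'-v\Qinf')\big]=r^2\Qinf\,(Wv)$, integrating from $0$ (the boundary term at $r=0$ vanishing since $v'(0)=\Qinf'(0)=0$), and using $\Qinf v'-v\Qinf'=\Qinf^2\,(v/\Qinf)'$ together with the formula for $W$ in Lemma~\ref{lemma:LL}, I obtain the identity
\[
\Big(\tfrac{v}{\Qinf}\Big)'(r)\;=\;\frac{2}{r^2\Qinf(r)^2}\int_0^r s^2\Qinf(s)^2\Big(\int_0^s K(s,t)\,\Qinf(t)\,v(t)\,dt\Big)\,ds,\qquad r>0 .
\]

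From this the absence of a sign change follows at once. After normalizing $v(0)>0$, put $r_0=\inf\{r>0:v(r)\leq 0\}$. On $(0,r_0)$ we have $v>0$, so the double integral above is strictly positive there and $v/\Qinf$ is strictly increasing on $(0,r_0)$; since $(v/\Qinf)(0^+)=v(0)/\Qinf(0)=:c_0>0$, this gives $v\geq c_0\Qinf>0$ on $(0,r_0)$. If $r_0<\infty$, letting $r\to r_0$ forces $v(r_0)\geq c_0\Qinf(r_0)>0$, contradicting the definition of $r_0$. Hence $r_0=\infty$: the solution never changes sign, $v>0$ throughout, $v/\Qinf$ is nondecreasing, and $v\geq c_0\Qinf$ on $[0,\infty)$.

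The exponential lower bound is obtained by feeding this information back into the identity. By Newton's theorem $\int_0^sK(s,t)\Qinf(t)^2\,dt=\beta-\big(|x|^{-1}\ast|\Qinf|^2\big)(s)$ with $\beta=\int|\Qinf|^2/|x|>0$, and this tends to $\beta$ as $s\to\infty$; combined with $v\geq c_0\Qinf$ it gives $\int_0^sK(s,t)\Qinf(t)v(t)\,dt\geq\tfrac12 c_0\beta$ for all large $s$, so that, restricting the outer integration to such $s$,
\[
\Big(\tfrac{v}{\Qinf}\Big)'(r)\;\geq\;\frac{c_0\beta\,A}{2\,r^2\Qinf(r)^2}\qquad(r\ \text{large}),\qquad A:=\int_{R_1}^{\infty}s^2\Qinf(s)^2\,ds\in(0,\infty).
\]
I then invoke the standard two-sided exponential control of the positive $H^1$-ground state: for every $\varepsilon>0$ there are constants with $c_\varepsilon e^{-(1+\varepsilon)r}\leq\Qinf(r)\leq C_\varepsilon e^{-(1-\varepsilon)r}$, which follows from \eqref{eq:Qinfnorm} by sub- and supersolution comparison with $e^{-ar}/r$, using $\big(|x|^{-1}\ast|\Qinf|^2\big)(r)\to 0$ (this is of the same nature as the decay estimate quoted in Theorem~\ref{th:intro}). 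The upper bound turns the last display into $(v/\Qinf)'(r)\gtrsim r^{-2}e^{2(1-\varepsilon)r}$; integrating, $(v/\Qinf)(r)\gtrsim r^{-2}e^{2(1-\varepsilon)r}$ for $r$ large; multiplying by the lower bound for $\Qinf$ gives $v(r)\gtrsim r^{-3}e^{(1-3\varepsilon)r}$. Given any $0<\delta<1$, choosing $\varepsilon<(1-\delta)/3$ yields $|v(r)|\geq Ce^{+\delta r}$ for $r\geq R$, whence $\int_{\RR^3}|v|^2=\infty$, i.e.\ $v\notin L^2_{\mathrm{rad}}(\RR^3)$.

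I expect the main obstacle to be precisely this last step: one needs sufficiently sharp \emph{two-sided} exponential bounds on $\Qinf$ and must track the exponents through the bootstrap so that the gain $2(1-\varepsilon)$ acquired by $v/\Qinf$ beats the loss $(1+\varepsilon)$ carried by $\Qinf$, leaving a rate arbitrarily close to $1$. If only the qualitative statement $v\notin L^2$ were required, a softer argument is available: assuming $v\in L^2$, the function $w:=v-c_0\Qinf\geq 0$ is again in $L^2$ and satisfies $L_-w=-Wv\leq 0$ (as $v>0$), so $\langle w,L_-w\rangle\leq 0$; since $L_-\geq 0$ this forces $w\in\ker L_-=\mathrm{span}\{\Qinf\}$, hence $v$ is a positive multiple of $\Qinf$ and then $\mathcal{L}_+\Qinf=0$, i.e.\ $W\Qinf\equiv 0$ — impossible, since $W\Qinf>0$ on $(0,\infty)$. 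This softer route, however, does not deliver the stated exponential rate.
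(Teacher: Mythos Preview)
Your proof is correct and follows essentially the same route as the paper: the Wronskian identity $r^2(v/\Qinf)'=\Qinf^{-2}\int_0^r s^2\Qinf\,(Wv)\,ds$, a first-intersection argument to rule out sign changes, and then the two-sided exponential bounds $B_\varepsilon e^{-(1+\varepsilon)r}\leq\Qinf(r)\leq A_\varepsilon e^{-(1-\varepsilon)r}$ fed back into the identity to extract the rate $e^{(1-3\varepsilon)r}$. The only cosmetic differences are that the paper normalizes to $v(0)>\Qinf(0)$ rather than tracking the ratio $c_0$, and it plugs the pointwise bounds directly into the double integral instead of first invoking Newton's theorem; your additional ``softer'' $L^2$-argument via $\ker L_-=\mathrm{span}\{\Qinf\}$ is a nice observation not present in the paper.
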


\begin{proof}
Since $\mathcal{L}_+ v = 0$ is a linear equation, we can assume without loss of generality that $v(0) > 0$; and moreover it is convenient to assume that $v(0) > \Qinf(0)$ holds. Next, we write $\mathcal{L}_+ v = 0$ as 
\begin{equation} \label{eq:vv}
v''(r) + \frac{2}{r} v'(r) = V(r) v(r)  + W(r),
\end{equation}
with
\begin{equation}
V(r) =  1 - (|x|^{-1} \ast |\Qinf|^2)(r),
\end{equation}
and
\begin{equation}
W(r) = 2 \Qinf(r) \int_0^r K(r,s) \Qinf(s) v(s) \, ds .
\end{equation}
Note that $\Qinf(r)$ satisfies equation (\ref{eq:vv}) with $W(r)$ being removed, i.\,e.,
\begin{equation} \label{eq:QQ}
\Qinf''(r) + \frac{2}{r} \Qinf'(r) = V(r) \Qinf(r) .
\end{equation} 

We now compare $v(r)$ and $\Qinf(r)$ as follows. An elementary calculation, using equations (\ref{eq:vv}) and (\ref{eq:QQ}), leads to the ``Wronskian-type'' identity
\begin{equation}
\big ( r^2 ( \Qinf v' - \Qinf' v) \big )' = r^2 \Qinf W,
\end{equation}
which, by integration, gives us
\begin{equation} \label{eq:precomp}
r^2 ( \Qinf v' - \Qinf' v)(r) = \int_0^r s^2 \Qinf(s) W(s) \, ds. 
\end{equation}
Hence, while keeping in mind that $\Qinf(r) > 0$, we find
\begin{equation} \label{eq:comp}
r^2 \Big ( \frac{v(r)}{\Qinf(r)} \Big )' = \frac{1}{\Qinf(r)^2} \int_0^r s^2 \Qinf(s) W(s) \, ds.
\end{equation}
From this identity we now claim that 
\begin{equation} \label{eq:comp2}
v(r) > \Qinf(r), \quad \mbox{for all $r \geq 0$}.
\end{equation}
To see this, recall that $v(0) > \Qinf(0)$ and, by continuity, we have that $v(r) > Q(r)$ for $r >0$ sufficiently small. Suppose now, on the contrary to (\ref{eq:comp2}), that there is a first intersection at some positive $r=r_*$, say, so that $v(r_*) = \Qinf(r_*)$. It is easy to see that the left-hand side of (\ref{eq:comp}) (or equivalently (\ref{eq:precomp})) has to be $\leq 0$ at $r=r_*$. On the other hand, since $v(r) > \Qinf(r) > 0$ on $[0,r_*)$, we conclude that the integral on right-hand side of (\ref{eq:comp}) at $r=r_*$ must be strictly positive. This contradiction shows that (\ref{eq:comp2}) must hold. In particular, the function $v(r)$ never changes its sign.

Next, we insert the estimate (\ref{eq:comp2}) back into (\ref{eq:comp}), which yields
\begin{equation} \label{eq:comp3}
r^2 \Big ( \frac{v(r)}{Q(r)} \Big )'(r) \geq \frac{2}{\Qinf(r)^2} \int_0^r s^2 \Qinf(s)^2 \int_0^s K(s,t) \Qinf(t)^2 \, dt \, ds .
\end{equation}
We notice that $\Qinf(r) > 0$ is the unique ground state for the Schr\"odinger operator
\begin{equation}
H = -\Delta + \widetilde{V}, \quad \mbox{with $\widetilde{V} =- |x|^{-1} \ast |\Qinf|^2.$}
\end{equation}
Since $H \Qinf = -\Qinf$ and $\widetilde{V}$ is a continuous function with $\widetilde{V} \to 0$ as $|x| \to \infty$, standard arguments show that, for any $\varepsilon > 0$, there exists a constant $A_\varepsilon > 0$ such that
\begin{equation} \label{ineq:Qupper}
\Qinf(r) \leq A_\varepsilon e^{-(1-\varepsilon) r}, \quad \mbox{for all $r \geq 0$}.
\end{equation}
Furthermore, since $\Qinf(r) > 0$ is the ground state of $H$, we can obtain the following lower bound: For any $\varepsilon > 0$, there exists a constant $B_\varepsilon > 0$ such that
\begin{equation} \label{ineq:Qlower}
\Qinf(r) \geq B_{\varepsilon} e^{-(1+\varepsilon) r}, \quad \mbox{for all $r \geq 0$}.
\end{equation}
For this classical result on ground states for Schr\"odinger operators. See, e.\,g., \cite[Theorem 3.2]{CarmonaSimon1981} where a probabilistic proof is given.

Now let $0 < \varepsilon < 1$ be given. Inserting the bounds (\ref{ineq:Qupper}) and (\ref{ineq:Qlower}) into equation (\ref{eq:comp3}), we obtain
\begin{equation}
r^2 \Big ( \frac{v(r)}{Q(r)} \Big )'(r) \geq C e^{(2-2 \varepsilon) r} \int_0^r  s^2 e^{-(2+2\varepsilon) s} \int_0^s K(s,t) e^{-(2+2\varepsilon) t} \, dt \, ds ,
\end{equation}
with some constant $C = C_\varepsilon > 0$ (we drop its dependence on $\varepsilon$ henceforth). Since the double integral on the right-hand side converges as $r \to \infty$ to some finite positve value, there exists some $a > 0$ such that 
\begin{equation}
r^2 \Big ( \frac{v(r)}{Q(r)} \Big )'(r) \geq C e^{(2- 2 \varepsilon) r} , \quad \mbox{for all $r \geq a$},
\end{equation}
with some constant $C> 0$. Integrating this lower bound and using (\ref{ineq:Qlower}) again, we find that
\begin{equation} 
v(r) \geq C \frac{e^{(1 - 3 \varepsilon) r}}{r^2},  \quad \mbox{for all $r \geq R$},
\end{equation}
with some constants $C> 0$ and $R \gg 1$. Thus, for any $0 < \delta  < 1$, we arrive at the claim of Lemma \ref{lemma:kerL} by taking $0 < \varepsilon < \frac{1}{3}(1-\delta)$ and choosing $C > 0$ appropriately. \end{proof}

With the help of Lemma \ref{lemma:kerL} we are now able to prove the triviality of the kernel of $L_+$ in the radial sector.

\begin{lemma} \label{lemma:Ltrivial}
For $L_+$ be given by (\ref{eq:Lnr}), we have that $L_+ \xi =0$ with $\xi \in L^2_{\mathrm{rad}}(\RR^3)$ implies that $\xi \equiv 0$.
\end{lemma}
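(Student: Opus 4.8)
The plan is to reduce the statement to the exponential–growth dichotomy of Lemma~\ref{lemma:kerL}. Assume $\xi\in L^2_{\mathrm{rad}}(\RR^3)$ solves $L_+\xi=0$. By elliptic regularity $\xi$ is smooth and radial, and since $|x|^{-1}\Qinf\in L^2(\RR^3)$ the decomposition of Lemma~\ref{lemma:LL} applies and gives
\[
\mathcal{L}_+\xi = 2c\,\Qinf, \qquad c := \int_{\RR^3}\frac{\Qinf\,\xi}{|x|}\,dx .
\]
The idea is to cancel the inhomogeneity $2c\,\Qinf$ on the right-hand side by subtracting an explicit radial $L^2$-function, thereby reducing to the homogeneous equation $\mathcal{L}_+w=0$, to which Lemma~\ref{lemma:kerL} (plus ODE uniqueness) will apply.

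The function that produces $\Qinf$ in the range of $L_+$ comes from scaling. Because of the normalization (\ref{eq:Qinfnorm}), the family $Q_b(x)=b^2\,\Qinf(bx)$ solves $-\Delta Q_b-(|x|^{-1}\ast|Q_b|^2)Q_b=-b^2Q_b$; differentiating at $b=1$ gives $L_+\psi=-2\,\Qinf$, where
\[
\psi := 2\,\Qinf + r\,\partial_r\Qinf .
\]
Since $\Qinf$ (hence also $\partial_r\Qinf$) decays exponentially, $\psi\in L^2_{\mathrm{rad}}(\RR^3)$, and $\psi(0)=2\,\Qinf(0)>0$. Rewriting $L_+\psi$ via Lemma~\ref{lemma:LL} yields $\mathcal{L}_+\psi=2(P-1)\,\Qinf$ with $P:=\int_{\RR^3}|x|^{-1}|\Qinf|^2\,dx$. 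One must rule out the degenerate case $P=1$: if $P=1$, then $\psi$ would be a nonzero element of $L^2_{\mathrm{rad}}(\RR^3)$ solving $\mathcal{L}_+\psi=0$ with $\psi(0)\neq0$, which is impossible by Lemma~\ref{lemma:kerL}.

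With $P\neq1$ in hand, set $w:=\xi-\tfrac{c}{P-1}\,\psi\in L^2_{\mathrm{rad}}(\RR^3)$; then $\mathcal{L}_+w=2c\,\Qinf-\tfrac{c}{P-1}\cdot2(P-1)\,\Qinf=0$. If $w(0)\neq0$, Lemma~\ref{lemma:kerL} forces $|w(r)|\geq Ce^{\delta r}$ for $r$ large, contradicting $w\in L^2$. Hence $w(0)=0$, and since $w$ is smooth and radial also $w'(0)=0$; uniqueness of solutions to $\mathcal{L}_+w=0$ with vanishing Cauchy data at the origin then gives $w\equiv0$. Therefore $\xi=\tfrac{c}{P-1}\,\psi$, and feeding this back into $0=L_+\xi=\tfrac{c}{P-1}L_+\psi=-\tfrac{2c}{P-1}\,\Qinf$ forces $c=0$, whence $\xi\equiv0$.

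The one substantive ingredient is the scaling identity $L_+\psi=-2\,\Qinf$ — the Choquard analogue of the classical relation $L_+\!\left(\tfrac{n}{2}Q+x\cdot\nabla Q\right)=-2Q$ for NLS, here built on the scaling $Q_b=b^2\,\Qinf(b\cdot)$ — which is what makes the inhomogeneity disappear, together with the observation that Lemma~\ref{lemma:kerL} applied to $\psi$ itself already excludes $P=1$. The remaining points (smoothness of $\xi$, $\psi\in L^2$, and uniqueness for $\mathcal{L}_+w=0$ with trivial Cauchy data) are routine; the only place that needs slight care is that, because of the nonlocal term $W$, the usual ODE uniqueness theorem must be replaced by a Gronwall argument applied to the Volterra (integral) formulation of $\mathcal{L}_+w=0$.
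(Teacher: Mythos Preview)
Your proof is correct and follows essentially the same route as the paper: both use Lemma~\ref{lemma:LL} to recast $L_+\xi=0$ as $\mathcal{L}_+\xi=2c\,\Qinf$, invoke the scaling element $R=\psi=2\Qinf+r\partial_r\Qinf$ with $L_+R=-2\Qinf$ to produce a particular solution, rule out the degenerate constant via Lemma~\ref{lemma:kerL}, and then finish with the dichotomy of Lemma~\ref{lemma:kerL} together with ODE/Volterra uniqueness for $\mathcal{L}_+w=0$. The only cosmetic difference is that your $P=\int|x|^{-1}|\Qinf|^2$ is the paper's $\tau=\int|x|^{-1}\Qinf\psi$ after a one-line integration by parts; otherwise the arguments coincide.
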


\begin{proof}
Suppose there exists $\xi \in L^2_{\mathrm{rad}}(\RR^3)$ with $\xi \not \equiv 0$ such that $L_+ \xi = 0$. Then, by Lemma \ref{lemma:LL}, the function $\xi$ solves the inhomogeneous problem
\begin{equation} \label{eq:Linhom}
\mathcal{L}_+ \xi = 2 \sigma \Qinf, \quad \mbox{with $\displaystyle \sigma =  \int_{\RR^3} \frac{\Qinf \xi}{|x|}$.}
\end{equation}
Therefore,
\begin{equation} \label{eq:super}
\xi = v + w,
\end{equation}
where $w$ is any particular solution to (\ref{eq:Linhom}) and $v$ is some function such $ \mathcal{L}_+ v= 0$. As shown below, it suffices to restrict ourselves to smooth $v$ and $w$.

We shall now construct a smooth $w \in L^2_{\mathrm{rad}}(\RR^3)$ as follows. We define the smooth radial function
\begin{equation}
R = 2 \Qinf + r \partial_r \Qinf \in L^2_{\mathrm{rad}}(\RR^3),
\end{equation}
where a calculation shows that
\begin{equation}
L_+ R = -2 \Qinf .
\end{equation}
Furthermore, by applying Lemma \ref{lemma:LL} to $R$, we find
\begin{equation}
\mathcal{L}_+ R = 2(\tau-1) \Qinf, \quad \mbox{with $\displaystyle \tau = \int_{\RR^3} \frac{\Qinf R}{|x|}$.}
\end{equation}
Note that $\tau \neq 1$ must hold, for otherwise Lemma \ref{lemma:kerL} with $v = R$ (and $v(0) = R(0) =Q(0) > 0$ and $v'(0) = R'(0) = 0$) would yield that $R \not \in L^2_{\mathrm{rad}}(\RR^3)$, which is a contradiction. Thus we have found a smooth particular solution to (\ref{eq:Linhom}) given by
\begin{equation}
w = \frac{\sigma}{\tau-1} R \in L^2_{\mathrm{rad}}(\RR^3).
\end{equation}
Further, we notice that $\xi \in L^2_{\mathrm{rad}}(\RR^3)$ with $L_+ \xi = 0$ is smooth by bootstrapping this equation. Therefore, by equation (\ref{eq:super}), we conclude that $v$ has to be smooth as well. Suppose that $v \equiv 0$. Then we have $\xi = w$ and $\sigma \neq 0$
(since otherwise $w=0\neq \xi$). This, however, contradicts that $L_+ \xi = 0$ and $L_+ w = -2 \frac{\sigma}{\tau -1 } \Qinf \neq 0$. 

Thus we see that $v \not \equiv 0$ in (\ref{eq:super}), where $v'(0) = 0$ by smoothness of $v$. Suppose now that $v(0) \neq 0$. Then Lemma \ref{lemma:kerL} yields that $v \not \in L^2_{\mathrm{rad}}(\RR^3)$, which contradicts (\ref{eq:super}) together with the fact that $\xi$ and $w$ both belong to $L^2_{\mathrm{rad}}(\RR^3)$. Finally, suppose that $v(0) = 0$ holds. Then $v$ solves the equation $\mathcal{L}_+ v = 0$ with initial data $v(0) = 0$ and $v'(0)=0$. However, by a standard fixed point argument, we see that the linear integro-differential equation $\mathcal{L}_+ v = 0$ with given initial data $v(0) \in \RR$ and $v'(0)=0$ has a unique solution. Therefore $v(0)=0$ and $v'(0)=0$ implies that $v \equiv 0$. Again, we arrive at a contradiction as above. \end{proof}

Clearly, Lemma \ref{lemma:Ltrivial} completes the proof of Proposition \ref{prop:nondegnr}. \hfill $\blacksquare$

\section{Local Uniqueness around $\Qinf$} \label{sec:localunique}

Recall that $\Honer$ denotes space of radial and real-valued functions that belong to $H^1(\RR^3)$. By using Proposition \ref{prop:nondegnr}, we can now prove the following local uniqueness result for a small neighborhood around $\Qinf$ in $\Honer$.

\begin{proposition} \label{prop:localunique}
Let $m > 0$ and $N > 0$ be given. Furthermore, suppose that $\Qinf \in \Honer$ is the unique radial, positive solution to 
\begin{equation} \label{eq:lieb2}
-\frac{1}{2m} \Delta \Qinf - \big (|x|^{-1} \ast |\Qinf|^2 \big ) \Qinf = - \lambda \Qinf,
\end{equation}
with $\int |\Qinf|^2 = N$, where $\lambda > 0$ is determined through $\Qinf$. Then there exist constants $c_0 \gg 1$, $\varepsilon > 0$, and $\delta > 0$ such that the following holds. For any $(c,\mu)$ with
\[
c \geq c_0, \quad  - \lambda -\varepsilon \leq -\mu - m c^2 \leq - \lambda + \varepsilon,
\]
the equation
\begin{equation} \label{eq:lieb3}
\sqrt{-c^2 \Delta + m^2 c^4} \, Q - \big (|x|^{-1} \ast |Q|^2 \big ) Q = -\mu Q
\end{equation}
has a unique solution $Q \in \Honer$, provided that $\| Q - \Qinf \|_{H^1} \leq \delta$.
\end{proposition}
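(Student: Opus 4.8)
The plan is to recast equation (\ref{eq:lieb3}) as a fixed-point equation in $\Honer$ and to solve it by a quantitative implicit-function argument, treating the pair $(c,\mu)$ as a parameter whose limiting value corresponds to the nonrelativistic profile $\Qinf$. \emph{Reformulation.} Set $\nu := -\mu - mc^2$, so that the admissible range reads $|\nu + \lambda| \le \varepsilon$, and introduce the shifted kinetic operator $T_c := \Tmc - mc^2$, which is self-adjoint on $L^2(\RR^3)$ with operator domain $H^1(\RR^3)$ and spectrum $[0,\infty)$. Choosing $\varepsilon < \lambda$ guarantees $-\nu \ge \lambda - \varepsilon > 0$, so that $T_c - \nu$ is invertible with $(T_c - \nu)^{-1} : L^2(\RR^3) \to H^1(\RR^3)$ bounded uniformly in $c \ge 1$ and in the admissible $\nu$; the same holds for $(-\tfrac{1}{2m}\Delta - \nu)^{-1} : L^2(\RR^3) \to H^2(\RR^3)$. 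Writing $\mathcal{N}(Q) := (|x|^{-1} \ast |Q|^2) Q$, which by the Hardy--Littlewood--Sobolev and Sobolev inequalities is a smooth cubic map $H^1(\RR^3) \to L^2(\RR^3)$ preserving radial symmetry, equation (\ref{eq:lieb3}) for $Q \in \Honer$ is equivalent (apply $T_c - \nu$) to
\[
\mathcal{G}_{c,\nu}(Q) := Q - (T_c - \nu)^{-1} \mathcal{N}(Q) = 0 \quad \text{in } \Honer ,
\]
and we set $\mathcal{G}_{\infty,\nu}(Q) := Q - (-\tfrac{1}{2m}\Delta - \nu)^{-1} \mathcal{N}(Q)$, so that $\mathcal{G}_{\infty,-\lambda}(\Qinf) = 0$ by (\ref{eq:lieb2}).

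There are two analytic inputs. First, a direct estimate of the Fourier multipliers yields
\[
\big \| (T_c - \nu)^{-1} - (-\tfrac{1}{2m}\Delta - \nu)^{-1} \big \|_{L^2(\RR^3) \to H^1(\RR^3)} \less \frac{1}{c} ,
\]
uniformly for $-\nu$ bounded away from $0$ --- one uses $0 \le \tfrac{\xi^2}{2m} - (\sqrt{c^2 \xi^2 + m^2 c^4} - m c^2) \less \min\{\xi^2, \xi^4/c^2\}$ together with the lower bounds on the two symbols, after splitting into the regimes $|\xi| \less 1$, $1 \less |\xi| \less mc$ and $|\xi| \gtrsim mc$. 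Combined with $\mathcal{N} \in C^1(H^1(\RR^3), L^2(\RR^3))$, this gives joint continuity of $(c, \nu, Q) \mapsto \big(\mathcal{G}_{c,\nu}(Q),\, D_Q \mathcal{G}_{c,\nu}(Q)\big) \in \Honer \times \mathcal{B}(\Honer)$, where $D_Q \mathcal{G}_{c,\nu}(Q) = I - (T_c - \nu)^{-1} \mathcal{N}'(Q)$ and continuity at $c=\infty$ is meant locally uniformly in $(\nu,Q)$ along $c \to \infty$. Second, at the limiting point one computes $(-\tfrac{1}{2m}\Delta + \lambda) - \mathcal{N}'(\Qinf) = L_+$, the operator of Theorem \ref{th:nondegnr}, so that $D_Q \mathcal{G}_{\infty,-\lambda}(\Qinf) = (-\tfrac{1}{2m}\Delta + \lambda)^{-1} L_+$. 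Here $L_+ = (-\tfrac{1}{2m}\Delta + \lambda) - \mathcal{N}'(\Qinf)$ is a bounded self-adjoint perturbation of $-\tfrac{1}{2m}\Delta + \lambda$ that is, moreover, compact from $H^1(\RR^3)$ to $L^2(\RR^3)$ (its coefficients decay by the exponential decay of $\Qinf$); hence, restricted to $L^2_{\mathrm{rad}}(\RR^3)$, it has essential spectrum $[\lambda, \infty) \subset (0,\infty)$, and $\mathrm{ker}\, L_+ = \{0\}$ there by Proposition \ref{prop:nondegnr}. Since $0$ lies below the essential spectrum and is not an eigenvalue, $0 \notin \sigma(L_+|_{L^2_{\mathrm{rad}}})$, so $L_+$ is a Banach-space isomorphism $\Honer \to H^{-1}_{\mathrm{rad}}(\RR^3)$; as $(-\tfrac{1}{2m}\Delta + \lambda)^{-1}$ is an isomorphism $H^{-1}_{\mathrm{rad}}(\RR^3) \to \Honer$, the operator $D_Q \mathcal{G}_{\infty,-\lambda}(\Qinf)$ is boundedly invertible on $\Honer$.

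The two inputs combine in a standard quantitative implicit-function (Newton--Kantorovich) argument. By continuity of $D_Q\mathcal{G}_{\infty,-\lambda}(\cdot)$ in $Q$ one fixes $\delta_0 > 0$ so that $D_Q \mathcal{G}_{\infty,-\lambda}(Q)$ is invertible with uniformly bounded inverse for $Q \in \overline{B}_{\delta_0}(\Qinf) \subset \Honer$; by the joint continuity, there are $c_0 \gg 1$ and $\varepsilon_0 > 0$ such that the same holds for $D_Q \mathcal{G}_{c,\nu}(Q)$ whenever $c \ge c_0$ and $|\nu + \lambda| \le \varepsilon_0$. Moreover
\[
\| \mathcal{G}_{c,\nu}(\Qinf) \|_{H^1} \le \big \| \big[(T_c - \nu)^{-1} - (-\tfrac{1}{2m}\Delta + \lambda)^{-1}\big] \mathcal{N}(\Qinf) \big \|_{H^1} + C\,|\nu + \lambda| \longrightarrow 0
\]
as $c \to \infty$ and $\nu \to -\lambda$. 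Hence, after shrinking $\delta \le \delta_0$ and $\varepsilon \le \varepsilon_0$ and enlarging $c_0$, the map $Q \mapsto Q - [D_Q \mathcal{G}_{c,\nu}(\Qinf)]^{-1} \mathcal{G}_{c,\nu}(Q)$ is a contraction of $\overline{B}_\delta(\Qinf)$ into itself; its unique fixed point is the unique zero of $\mathcal{G}_{c,\nu}$ in this ball, that is, the unique solution $Q \in \Honer$ of (\ref{eq:lieb3}) with $\| Q - \Qinf \|_{H^1} \le \delta$. Undoing the substitution $\nu = -\mu - mc^2$ gives Proposition \ref{prop:localunique}.

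The main obstacle is the joint continuity of the nonlinear map and its derivative \emph{up to the degenerate endpoint $c = \infty$}, where the order of the kinetic operator drops from one to two: quantitatively this is the Fourier-multiplier bound above, i.e.\ operator-norm convergence $(T_c - \nu)^{-1} \to (-\tfrac{1}{2m}\Delta - \nu)^{-1}$ in $\mathcal{B}(L^2(\RR^3), H^1(\RR^3))$ --- mere strong resolvent convergence would not control $D_Q\mathcal{G}_{c,\nu}(Q) - D_Q\mathcal{G}_{\infty,-\lambda}(\Qinf)$ uniformly. The other essential point, which is where Proposition \ref{prop:nondegnr} enters, is the invertibility of the linearization at $\Qinf$ on $\Honer$: triviality of $\mathrm{ker}\, L_+$ in the radial sector together with $\lambda > 0$ (keeping $0$ below the essential spectrum of $L_+$) is exactly what makes $D_Q\mathcal{G}_{\infty,-\lambda}(\Qinf)$ an isomorphism.
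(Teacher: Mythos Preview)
Your proof is correct and follows essentially the same route as the paper: reformulate (\ref{eq:lieb3}) as $G(Q,\beta,z)=0$ with $G(u,\beta,z)=u+\Res(\beta,z)g(u)$ (the paper uses $\beta=c^{-1}$ to include the endpoint $\beta=0$), establish operator-norm continuity of the resolvent family $\Res(\beta,z)$ and hence of $\partial_u G$, invoke Proposition~\ref{prop:nondegnr} to get invertibility of $\partial_u G(\Qinf,0,\lambda)$ on $\Honer$, and conclude by an implicit-function argument. Your presentation is slightly more quantitative (the Newton--Kantorovich contraction and the explicit $L^2\to H^1$ rate $\lesssim 1/c$ for the resolvent difference), but the structure and key inputs are the same.
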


\subsection{Proof of Proposition \ref{prop:localunique}}

For $\beta \geq 0$ and $z > 0$, we define the map
\begin{equation}
G(u, \beta, z ) = u + \Res(\beta, z) g(u) , 
\end{equation}
where we set
\begin{equation}
g(u) = - \big (|x|^{-1} \ast |u|^2 ) u,
\end{equation}
and, for $\beta \geq 0$ and $z > 0$, we define the family of resolvents
\begin{equation}
\Res(\beta, z) = \left \{ \begin{array}{ll} \big (-\frac{1}{2m} \Delta + z \big )^{-1} & \mbox{if $\beta = 0,$} \\ \big ( \sqrt{-\beta^{-2} \Delta + m^2 \beta^{-4}} - m \beta^{-2} + z \big )^{-1} & \mbox{if $\beta > 0.$ }
\end{array} \right .
\end{equation}
By an elementary calculation, we verify the following equivalences:
\begin{equation}
\mbox{$Q \in \Honer$ solves (\ref{eq:lieb2}) if and only if $G(Q, 0, \lambda) = 0$},
\end{equation}
and
\begin{equation} \label{eq:equiv}
\mbox{$Q \in \Honer$ solves (\ref{eq:lieb3}) if and only if $G(Q, c^{-1}, \mu + mc^2) = 0$} .
\end{equation}

To prove Proposition \ref{prop:localunique}, we now construct an implicit function-type argument for the map
\begin{equation} \label{eq:GMap}
G : \Honer \times [0, \beta_0] \times [\lambda - \varepsilon, \lambda + \varepsilon] \to \Honer,
\end{equation}
where $\beta_0 > 0$ and $\varepsilon > 0$ are small constants. To see that indeed $G(u, \beta, z) \in \Honer$  for $u \in \Honer$, we notice that $\Res(\beta,z) : \Honer \to \Honer$, as can be seen by using the Fourier transform. That $g(u)$ maps $\Honer$ into itself follows readily from the Hardy-Littlewood-Sobolev inequality and Sobolev embeddings.  Hence (\ref{eq:GMap}) is indeed well-defined.

Next, we show that the derivative
\begin{equation}
\partial_u G(u,\beta,z) = 1 + \Res(\beta,z) \partial_u g(u) : \Honer \to \Honer
\end{equation}
depends continuously on $(u,\beta,z)$. Here $\partial_u g(u)$ acting on $\xi \in \Honer$ is found to be 
\begin{equation} \label{eq:dgu}
\partial_u g(u) \xi = - \big ( |x|^{-1} \ast |u|^2 \big ) \xi - 2 u \big ( |x|^{-1} \ast (u \xi) \big ) .
\end{equation}
By using the Hardy-Littlewood-Sobolev inequality and Sobolev embeddings, we obtain that
\begin{equation}
\| ( \partial_u g(u_1) - \partial_u g(u_2)) \xi \|_{H^1} \lesssim ( \| u_1 \|_{H^1} + \| u_2 \|_{H^1} ) \| u_1 - u_2 \|_{H^1}  \| \xi \|_{H^1} ;
\end{equation}
see, e.\,g., \cite{LenzmannWP2007} for similar estimates proving Lipschitz continuity of $g(u)$. Using this estimate, we find for $u_1, u_2, \xi \in \Honer$, $\beta_1, \beta_2 \in [0,\beta_0]$, and $z_1, z_2 > 0$, 
\begin{align}
& \| ( \partial_u G(u_1, \beta_1, z_1) -\partial_u G(u_2,\beta_2, z_2) ) \xi \|_{H^1} \label{ineq:conv} \\ & \leq  \| ( \Res(\beta_1, z_1) - \Res(\beta_2, z_2)) \partial_u g(u_1) \xi \|_{H^1}  + \| \Res(\beta_2,z_2) ( \partial_u g(u_1) - \partial_u g(u_2)) \xi \|_{H^1} \nonumber \\
& \lesssim \| \Res(\beta_1, z_1) - \Res(\beta_2, z_2) \|_{L^2 \to L^2} \| u_1 \|_{H^1}^2 \| \xi \|_{H^1} \nonumber \\ & \quad + \| \Res(\beta_2, z_2) \|_{L^2 \to L^2} ( \| u_1 \|_{H^1} + \| u_2 \|_{H^1} ) \| u_1 - u_2 \|_{H^1} \| \xi \|_{H^1} ,  \nonumber
\end{align}
where we also use the fact that $\| \Res(\beta,z) \|_{H^s \to H^s} = \| \Res(\beta,z) \|_{L^2 \to L^2}$ for any $s \in \RR$, since $\Res(\beta,z)$ commutes with $\langle \nabla \rangle$. Moreover, by using the Fourier transform, one verifies 
\begin{equation} \label{eq:Rescon}
\| \Res(\beta_1, z_1) - \Res(\beta_2,z_2) \|_{L^2 \to L^2} \to 0 \quad \mbox{as} \quad (\beta_1, z_1) \to (\beta_2,z_2) , 
\end{equation}
for any $\beta_1, \beta_2 \geq 0$ and $z_1, z_2 > 0$. (For later use, we record that (\ref{eq:Rescon}) also holds for complex $z_1, z_2 \in \CC \setminus [0,\infty)$.) Going back to (\ref{ineq:conv}), we thus find
\begin{equation*}
\| \partial_u G(u_1,\beta_1, z_1) - \partial_u G(u_2, \beta_2, z_2) \|_{H^1 \to H^1} \to 0 
\end{equation*}
as $\| u_1 - u_2 \|_{H^1} \to 0$ and $(\beta_1, z_1) \to (\beta_2, z_2)$. Hence $\partial_u G(u,\beta, z)$ depends continuously on $(u,\beta,z)$.

By Proposition \ref{prop:nondegnr} and its following remark, we have that the radial restriction of the linearized operator $L_+$ around $\Qinf$ has trivial kernel. This implies that the compact operator $(-\frac{1}{2m} \Delta + \lambda)^{-1} \partial_u g(\Qinf)$ does not have $-1$ in its spectrum. Hence the inverse operator
\begin{equation}
\big ( \partial_u G(\Qinf,0,\lambda) \big )^{-1} : \Honer \to \Honer
\end{equation}
exists. By the continuity of $\partial_u G(u,\beta,z)$ shown above, an appropriate version of an implicit function theorem (see, e.\,g., \cite{Chang2005}) implies that, for $\beta_0 > 0$ and $\varepsilon > 0$ sufficiently small, there exists a unique solution $Q=Q(\beta,z) \in \Honer$ such that
\begin{equation}
 G(Q(\beta,z), \beta, z) = 0 \quad \mbox{for $\beta \in [0,\beta_0]$ and $z \in [\lambda -\varepsilon, \lambda + \varepsilon ]$} 
\end{equation}
with
\begin{equation}
 \| Q(\beta,z) - \Qinf \|_{H^1} \leq \delta \quad \mbox{for some $\delta > 0$.}
\end{equation}
Moreover, the map $(\beta,z) \mapsto Q(\beta, z) \in \Honer$ is continuous. 

By setting $c_0 = \beta_0^{-1}$ and recalling the equivalence (\ref{eq:equiv}), we complete the proof of Proposition \ref{prop:localunique}. \hfill $\blacksquare$

\section{Proof of Theorem \ref{th:unique}} \label{sec:proof:th:unique}

First, we notice that it is sufficient to prove uniqueness of symmetric-decreasing ground states for the variational problem (\ref{eq:var}), thanks to Theorem \ref{th:intro} Part iii). Next, we make use of the rescaling correspondence formulated in Lemma \ref{lemma:equivalence}, which relates ground states for the dimensionalized and de-dimensionalized Hartree energy functionals $\Enc(\psi)$ and $\En(\psi)$ defined in (\ref{eq:Enc}) and (\ref{eq:En}), respectively. 

In what follows, we fix $\int |\Qc|^2 = 1$ and we suppose that $\Qc = \Qc^* \in \Hhalf$ is a symmetric-decreasing ground state for $\Enc(\psi)$ subject to $\int |\psi|^2 = 1$. Recall from Lemma \ref{lemma:equivalence} that $\Qc$ indeed exists for $c \geq c_0$ with $c_0$ being a sufficiently large constant. Let $\mu(Q_c)$ denote the Lagrange multiplier associated to $\Qc$ for $c \geq c_0$. We now claim that $\mu$ only depends on $c$ except for some countable set, i.\,e., we have
\begin{equation} \label{eq:mu_c}
\mu(Q_c) = \mu(c), \quad \mbox{for $c \in (c_0, \infty) \setminus \Xi$,} 
\end{equation} 
where $\Xi$ is some countable set. To prove (\ref{eq:mu_c}), we argue as follows. By Lemma \ref{lemma:equivalence}, we see that $Q = c^{-2} \Qc(c^{-1} \cdot)$ is a symmetric-decreasing ground state for $\En(\psi)$ subject to $\int |\psi|^2 = N = c^{-1}$; and moreover the Lagrange multiplier ${\mu}(Q)$ for $Q$ is found to be 
\begin{equation} \label{eq:mucorr}
{\mu}(Q) = c^{-2} \mu(Q_c). 
\end{equation}
Next, we consider the ground state energy $E(N)$ given by (\ref{eq:var}) for $0 < N < c_0^{-1}$. From \cite{LiebYau1987, FJLNonlinearity2007} we know that $E(N)$ is strictly concave. Hence $E'(N)$ exists for all $N \in (0,c_0^{-1}) \setminus \Sigma$, where is $\Sigma$ is some countable set, and we readily find that
\begin{equation} \label{eq:Enmu}
E'(N) = -\mu(Q) , \quad \mbox{for $N \in (0, c_0^{-1}) \setminus \Sigma$}. 
\end{equation}
Therefore the left-hand side of (\ref{eq:mucorr}) only depends on $N = c^{-1}$ except when $N \in \Sigma$, which proves (\ref{eq:mu_c}) with the countable set $\Xi = \{ c : \mbox{$c > c_0$ and $c^{-1} \in \Sigma$}\}$.

Suppose $\{ c_n \}_{n=1}^\infty$ is a sequence with such that $c_n \to \infty$ and values in $c_n \in (c_0, \infty) \setminus \Xi$.  Correspondingly, let $\{ \Qcn \}_{n=1}^\infty$ be a sequence of symmetric-decreasing ground states for $\Enc(\psi)$ with $\int |\Qcn|^2 = 1$ for all $n \geq 1$. By Proposition \ref{prop:nrlimit}, for any such sequence $\{ \Qcn \}$, we have that $\Qcn$ and its corresponding Lagrange multipliers $\mu_{c_n}$ satisfy the assumption of Proposition \ref{prop:localunique}, provided that $n \gg 1$. By the local uniqueness result stated in Proposition \ref{prop:localunique} and the fact $\mu_{c_n}$ only depends on $c_n$, we conclude that the symmetric-decreasing ground state $\Qc$ for $\Enc(\psi)$ subject to $\int |\psi|^2 = 1$ is unique, provided that $c \in (c_0, \infty) \setminus \Xi$ holds, where $c_0 \gg 1$ is sufficiently large and $\Xi$ is some countable set. 

Finally, by Lemma \ref{lemma:equivalence}, we deduce uniqueness of symmetric-decreasing ground states $Q$ for $\En(\psi)$ subject to $\int |\psi|^2 = N$, provided that $N \in (0,N_0) \setminus \Sigma$ holds, where $N_0 =c_0^{-1} \ll 1$ is sufficiently small and $\Sigma$ denotes some countable set. \hfill $\blacksquare$

\section{Proof of Theorems \ref{th:nondeg} and \ref{th:nondegnr}} \label{sec:proof:th:nondeg} 

We first prove Theorem \ref{th:nondegnr}. By rescaling $\Qinf(r) \mapsto a \Qinf(br)$ with suitable $a > 0$ and $b >0$, we can assume without loss of generality that $\Qinf \in \Honer$ satisfies the normalized equation
\begin{equation} \label{eq:liebnorm}
- \Delta \Qinf - \big ( |x|^{-1} \ast |\Qinf|^2 \big ) \Qinf = -\Qinf.
\end{equation} 
To complete the proof of Theorem \ref{th:nondegnr}, it suffices to prove the following result.

\begin{proposition} \label{prop:Qinfnondeg}
Let $\Qinf \in \Honer$  be the unique radial and positive solution to equation (\ref{eq:liebnorm}). Then the linearized operator $L_+$ given by
\[
L_+\xi  =  -\Delta \xi +  \xi - \big ( |x|^{-1} \ast |\Qinf|^2 \big ) \xi - 2 \Qinf \big ( |x|^{-1} \ast (\Qinf \xi) \big ),
\]
acting on $L^2(\RR^3)$ with domain $H^1(\RR^3)$, has the kernel
\[
\mathrm{ker} \, L_+ = \mathrm{span} \, \big \{ \partial_{x_1} \Qinf, \partial_{x_2} \Qinf, \partial_{x_3} \Qinf \big \} .
\]
\end{proposition}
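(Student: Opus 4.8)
The plan is to deduce this full-space statement from the radial result of Proposition~\ref{prop:nondegnr} by decomposing $L^2(\RR^3)$ into spherical-harmonic sectors. Since $\Qinf$ is radial, multiplication by the radial functions $\Qinf$ and $|x|^{-1}\ast|\Qinf|^2$ commutes with rotations, and convolution with $|x|^{-1}$ is rotation invariant; hence $L_+$ commutes with the natural action of $SO(3)$. Writing $L^2(\RR^3)=\bigoplus_{\ell\geq 0}\mathcal{H}_{(\ell)}$ with $\mathcal{H}_{(\ell)}\cong L^2(\RR_+,r^2\,dr)\otimes\mathcal{Y}_\ell$, where $\mathcal{Y}_\ell$ is the space of spherical harmonics of degree $\ell$, the operator $L_+$ preserves each $\mathcal{H}_{(\ell)}$ and acts there as $L_+^{(\ell)}\otimes\mathrm{Id}$ for a radial operator $L_+^{(\ell)}$ on $L^2(\RR_+,r^2\,dr)$. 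Consequently $\mathrm{ker}\,L_+=\bigoplus_\ell(\mathrm{ker}\,L_+^{(\ell)})\otimes\mathcal{Y}_\ell$, so it suffices to prove: (i) $\mathrm{ker}\,L_+^{(0)}=\{0\}$; (ii) $\mathrm{ker}\,L_+^{(1)}=\mathrm{span}\{\varphi_1\}$, where $\varphi_1(r)=\Qinf'(r)$ is the common radial profile of the functions $\partial_{x_i}\Qinf=\Qinf'(r)\,x_i/|x|$; and (iii) $\mathrm{ker}\,L_+^{(\ell)}=\{0\}$ for all $\ell\geq 2$. Item (i) is precisely Proposition~\ref{prop:nondegnr}, and since differentiating equation~(\ref{eq:liebnorm}) gives $L_+(\partial_{x_i}\Qinf)=0$ with $\partial_{x_i}\Qinf\in\mathcal{H}_{(1)}$, assembling (i)--(iii) yields $\mathrm{ker}\,L_+=\mathrm{span}\{\partial_{x_1}\Qinf,\partial_{x_2}\Qinf,\partial_{x_3}\Qinf\}$.

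For the angular channels I would record that $L_+^{(\ell)}=-\Delta_\ell+1-V-2\Qinf\,N^{(\ell)}$, where $-\Delta_\ell=-\partial_r^2-\tfrac{2}{r}\partial_r+\tfrac{\ell(\ell+1)}{r^2}$ is the radial Laplacian on $L^2(\RR_+,r^2\,dr)$, the function $V=|x|^{-1}\ast|\Qinf|^2$ is a fixed bounded, radial, decaying potential, and $N^{(\ell)}f=4\pi(-\Delta_\ell)^{-1}(\Qinf f)$ is the $\ell$-th multipole of the Newtonian potential of $\Qinf f$ (using $-\Delta(|x|^{-1}\ast g)=4\pi g$). Since the perturbations $-V$ and $-2\Qinf N^{(\ell)}$ are relatively compact (the second being compact because $\Qinf$ decays exponentially), Weyl's theorem gives $\sigma_{\mathrm{ess}}(L_+^{(\ell)})=[1,\infty)$ for every $\ell$. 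For (ii): because $\Qinf=\Qinf^*$ is symmetric-decreasing we have $\varphi_1\leq 0$ and $\varphi_1\not\equiv 0$, and $L_+^{(1)}\varphi_1=0$. The Green's function of $-\Delta_1$ is strictly positive and $\Qinf>0$, so $e^{-tL_+^{(1)}}$ is positivity improving and Perron--Frobenius applies: if $\inf\sigma(L_+^{(1)})<0$ it is attained by an eigenfunction $g_0>0$, and then orthogonality $\langle g_0,\varphi_1\rangle=0$ contradicts $g_0>0\geq\varphi_1\not\equiv 0$. Hence $\inf\sigma(L_+^{(1)})=0$, so $\varphi_1$ spans the simple ground-state eigenspace, i.e.\ $\mathrm{ker}\,L_+^{(1)}=\mathrm{span}\{\varphi_1\}$; in particular $L_+^{(1)}\geq 0$.

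For (iii), fix $\ell\geq 2$ and $f\in\mathrm{ker}\,L_+^{(\ell)}$; since $0<\inf\sigma_{\mathrm{ess}}(L_+^{(\ell)})$, $f$ is a genuine $L^2$-eigenfunction and $\langle f,L_+^{(\ell)}f\rangle=0$. Writing the nonlocal term via $N^{(\ell)}f=4\pi(-\Delta_\ell)^{-1}(\Qinf f)$, its contribution to the quadratic form equals $-8\pi\|(-\Delta_\ell)^{-1/2}(\Qinf f)\|^2$. The crucial observation is monotonicity in $\ell$: since $-\Delta_\ell=-\Delta_1+\tfrac{\ell(\ell+1)-2}{r^2}\geq-\Delta_1$ as positive operators, operator monotonicity of the inverse gives $(-\Delta_\ell)^{-1}\leq(-\Delta_1)^{-1}$, whence $\|(-\Delta_\ell)^{-1/2}(\Qinf f)\|\leq\|(-\Delta_1)^{-1/2}(\Qinf f)\|$. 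Therefore
\[
0=\langle f,L_+^{(\ell)}f\rangle\geq\langle f,L_+^{(1)}f\rangle+\int_0^\infty\frac{\ell(\ell+1)-2}{r^2}\,|f(r)|^2\,r^2\,dr\geq\int_0^\infty\frac{\ell(\ell+1)-2}{r^2}\,|f(r)|^2\,r^2\,dr\geq 0,
\]
using $L_+^{(1)}\geq 0$ from (ii). As $\ell(\ell+1)-2>0$ this forces $f\equiv 0$, proving (iii). Undoing the rescaling to~(\ref{eq:liebnorm}) then gives Theorem~\ref{th:nondegnr}.

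The main obstacle is step (iii): the key is recognizing that the nonlocal part of $L_+^{(\ell)}$ can be written through $(-\Delta_\ell)^{-1}$, so that its (negative) contribution to the quadratic form decreases in modulus with $\ell$ and is outweighed by the strictly positive centrifugal surplus $\tfrac{\ell(\ell+1)-2}{r^2}$. Making this precise takes some care with the form domains of the inverse radial Laplacians $(-\Delta_\ell)^{-1}$ — notably that $\Qinf f$ lies in the domain of $(-\Delta_\ell)^{-1/2}$, which uses the exponential decay of $\Qinf$. A secondary technical point is justifying the positivity-improving property of $e^{-tL_+^{(1)}}$ despite the nonlocal term, needed for the Perron--Frobenius argument in step (ii); this follows from the strict positivity of the Green's function of $-\Delta_1$ and of $\Qinf$, combined with a Trotter/Duhamel expansion.
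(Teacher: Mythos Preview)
Your proof is correct and follows the same overall architecture as the paper: decompose into spherical harmonics, use Proposition~\ref{prop:nondegnr} for $\ell=0$, use a Perron--Frobenius argument to identify $\Qinf'$ as the (simple) ground state of $L_+^{(1)}$ with eigenvalue $0$, and then compare $L_+^{(\ell)}$ with $L_+^{(1)}$ for $\ell\geq 2$.

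There is one genuine streamlining in your step (iii) worth highlighting. The paper first proves the Perron--Frobenius property for \emph{every} $L_{+,(\ell)}$ with $\ell\geq 1$ (its Lemma~\ref{lemma:perron}), then takes the strictly positive ground state $\phi_{0,(\ell)}$ and writes
\[
e_{0,(\ell)}=\langle \phi_{0,(\ell)},L_{+,(1)}\phi_{0,(\ell)}\rangle + K_{(\ell)},
\]
showing $K_{(\ell)}>0$ via the \emph{pointwise} inequality $\tfrac{1}{3}\tfrac{r_<}{r_>^2}\geq\tfrac{1}{2\ell+1}\tfrac{r_<^\ell}{r_>^{\ell+1}}$ together with the positivity of $\Qinf$ and $\phi_{0,(\ell)}$. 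You instead observe that this kernel inequality is precisely the statement $(-\Delta_\ell)^{-1}\leq(-\Delta_1)^{-1}$ as quadratic forms, which yields $\langle f,L_+^{(\ell)}f\rangle\geq\langle f,L_+^{(1)}f\rangle+(\ell(\ell+1)-2)\int_0^\infty|f|^2\,dr$ for \emph{any} $f$ in the form domain, not just positive ones. This removes the need for Perron--Frobenius when $\ell\geq 2$ and is a clean simplification; the paper's route, on the other hand, gives the slightly stronger conclusion $L_{+,(\ell)}>0$ (strict positivity of the bottom eigenvalue). Both arguments rest on the same monotonicity of the multipole Green's kernels, and your identification $N^{(\ell)}=4\pi(-\Delta_\ell)^{-1}(\Qinf\,\cdot)$ matches the paper's explicit $W_{(\ell)}$. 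For the positivity-improving step at $\ell=1$, the paper uses a Neumann expansion of the resolvent (Lemma~\ref{lemma:posimprov}) rather than your Trotter/Duhamel sketch for the semigroup; either route works here since the nonlocal perturbation has a strictly positive integral kernel.
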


\begin{remark*}
{\em For linearized operators $L_+$ arising from ground states $Q$ for NLS with local nonlinearities, it is a well-known fact that $\mathrm{ker} \, L_+ = \{ 0 \}$ when $L_+$ is restricted to radial functions implies that $\mathrm{ker} \, L_+$ is spanned by $\{ \partial_{x_i} Q \}_{i=1}^3$. 

The proof, however, involves some Sturm-Liouville theory which is not applicable to $L_+$ given above, due to the presence of the nonlocal term. (Also, recall that Newton's theorem is not at our disposal, since we do not restrict ourselves to radial functions anymore.) To overcome this difficulty, we have to develop Perron-Frobenius-type arguments for the action of $L_+$ with respect to decomposition into spherical harmonics. }
\end{remark*}

\subsection{Proof of Proposition \ref{prop:Qinfnondeg}}

Since $\Qinf(r)$ and $|x|^{-1}$ are radial functions, the operator $L_+$ commutes with rotations in $\RR^3$; i.\,e., we have that $(L_+ \xi (R \cdot))(x) = (L_+ \xi)(Rx)$ for all $R \in O(3)$. Therefore, we decompose any $\xi \in L^2(\RR^3)$ using spherical harmonics according to
\begin{equation} \label{eq:sph}
\xi(x) = \sum_{\ell =0 }^\infty \sum_{m=-\ell}^\ell  f_{\ell m}(r) Y_{ \ell m}(\Omega), 
\end{equation}
where $x= r \Omega$ with $r=|x|$ and $\Omega \in \mathbb{S}^2$. This gives us the direct decomposition
\begin{equation}
L^2(\RR^3) = \bigoplus \limits_{\ell = 0}^\infty \Hell \, , 
\end{equation}
so that $L_+$ acts invariantly on each 
\begin{equation}
\Hell = L^2 (\RR_+, r^2 dr  ) \otimes \mathcal{Y}_{(\ell)} .
\end{equation} 
Here $\mathcal{Y}_{(\ell)} = \mathrm{span} \, \{ Y_{\ell m} \}_{m=-\ell}^{+\ell}$ denotes the $(2 \ell + 1)$-dimensional eigenspace corresponding to the eigenvalue $\kappa_\ell = -\ell( \ell + 1)$ of the spherical Laplacian $\Delta_{\mathbb{S}^2}$ acting on $L^2(\mathbb{S}^2)$.

Let us now find an explicit formula for the action of $L_+$ on each $\Hell$. To this end, we recall the well-known the fact that
\begin{equation}
-\Delta = -\partial_{r}^2 - \frac{2}{r} \partial_r + \frac{\ell ( \ell + 1) }{r^2} \quad \mbox{on $\Hell$},
\end{equation}
as well as the multipole expansion
\begin{equation}
\frac{1}{|x - x'|} = 4 \pi \sum_{\ell = 0}^\infty \sum_{m = -\ell}^{+\ell} \frac{1}{2 \ell + 1} \frac{ r_<^\ell}{r_>^{\ell +1}} Y_{\ell m}(\Omega) Y_{\ell m}^*(\Omega') ,
\end{equation}
where $r_< = \min (|x|, |x'|)$ and $r_> = \max (|x|, |x'|)$. An elementary calculation leads to the following equivalence: We have that $L_+ \xi = 0$ if and only if
\begin{equation} \label{eq:Lequiv}
L_{+,(\ell)} f_{\ell m} = 0,\quad \mbox{for $\ell = 0,1,2, \ldots$ and $m = -\ell, \ldots, +\ell$} ,
\end{equation}
with $\xi$ given by (\ref{eq:sph}). Here the operator $L_{+,(\ell)}$ acting on $L^2(\RR_+, r^2 dr)$ is (formally) given by
\begin{equation}
(L_{+,(\ell)} f)(r) = -f''(r) - \frac{2}{r} f'(r) + \frac{\ell (\ell + 1)}{r^2} f(r) + V(r) f(r) + (W_{(\ell)} f)(r) ,
\end{equation}
with the local potential 
\begin{equation} \label{eq:Vell}
V(r) = - \big( |x|^{-1} \ast |\Qinf|^2)(r) ,
\end{equation}
and the nonlocal linear operator 
\begin{equation} \label{eq:Well}
(W_{(\ell)} f)(r) = -Ê\frac{8 \pi}{2 \ell + 1} \Qinf(r) \int_0^\infty \frac{r_<^\ell}{r_>^{\ell + 1}} \Qinf(s) f(s) \, s^2 \, ds ,
\end{equation}
where $r_< = \min(r,s)$ and $r_> = \max (r,s)$.

To prove Proposition \ref{prop:Qinfnondeg}, it suffices to assume henceforth that $\ell \geq 1$ holds, since $L_{+,(0)} f = 0$ implies that $f \equiv 0$ holds, by Proposition \ref{prop:nondegnr} above. Hence any nontrivial elements in the kernel of $L_+$ can only belong to $\Hell$ with $\ell \geq 1$. Before we proceed, we show that each $L_{+,(\ell)}$ enjoys a Perron-Frobenius property as follows.

\begin{lemma} \label{lemma:perron}
For each $\ell \geq 1$, the operator $L_{+,(\ell)}$ is essentially self-adjoint on $C^\infty_0(\RR_+) \subset L^2(\RR_+, r^2 dr)$ and bounded below. Moreover, each $L_{+,(\ell)}$ has the Perron-Frobenius property. That is, if $e_{0,(\ell)}$ denotes the lowest eigenvalue of $L_{+,(\ell)}$, then $e_{0,(\ell)}$ is simple and the corresponding eigenfunction $\phi_{0,(\ell)}(r) > 0$ is strictly positive.
\end{lemma}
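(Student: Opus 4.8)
\emph{Plan.} The plan is to write $L_{+,(\ell)} = T_{\ell} + V + W_{(\ell)}$, where $T_{\ell} = -\partial_r^2 - \tfrac{2}{r}\partial_r + \tfrac{\ell(\ell+1)}{r^2}$ is the radial part of $-\Delta$ in the $\ell$-th spherical sector, $V$ is the multiplication operator $V(r)=-(|x|^{-1}\ast|\Qinf|^2)(r)$ from (\ref{eq:Vell}), and $W_{(\ell)}$ is the nonlocal operator from (\ref{eq:Well}). The argument will then split into three steps: (i) essential self-adjointness on $C^\infty_0(\RR_+)$ and semiboundedness, via Kato--Rellich; (ii) a proof that the resolvent $(L_{+,(\ell)}+E)^{-1}$ is positivity improving on $L^2(\RR_+, r^2\,dr)$ for $E\gg 1$; (iii) an appeal to the abstract Perron--Frobenius theorem for positivity improving operators. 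The one genuinely delicate point is step (ii), where everything rests on the \emph{sign} of the nonlocal term: since $\Qinf>0$ everywhere and the prefactor $-\tfrac{8\pi}{2\ell+1}$ in (\ref{eq:Well}) is negative, the integral kernel of $W_{(\ell)}$ is strictly negative, so $-W_{(\ell)}$ is positivity improving and enters the resolvent expansion with exactly the right sign. If this sign were reversed, the method would fail.

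\emph{Step (i).} I would first note that the unitary map $f\mapsto r f$ from $L^2(\RR_+, r^2\,dr)$ onto $L^2(\RR_+, dr)$ carries $C^\infty_0(\RR_+)$ into itself and intertwines $T_{\ell}$ with the half-line Schr\"odinger operator $-\partial_r^2 + \ell(\ell+1)r^{-2}$. For $\ell\geq 1$ we have $\ell(\ell+1)\geq 2 > \tfrac34$, so the classical Weyl limit-point criterion puts this operator in the limit-point case at $r=0$ (and trivially at $r=\infty$), hence it is essentially self-adjoint on $C^\infty_0(\RR_+)$, and $T_{\ell}\geq 0$. Newton's theorem gives $0\leq -V(r)\leq N/r$, and a direct splitting of the convolution yields $\|V\|_{L^\infty}\leq 2\pi\|\Qinf\|_{L^\infty}^2 + N<\infty$. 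Using $\tfrac{r_<^\ell}{r_>^{\ell+1}}\leq (rs)^{-1/2}$ for $\ell\geq 1$ together with the exponential decay of $\Qinf$, one checks that $W_{(\ell)}$ has a symmetric kernel with finite Hilbert--Schmidt norm on $L^2(\RR_+, r^2\,dr)$, so it is bounded, self-adjoint and compact. By Kato--Rellich, $L_{+,(\ell)}=T_{\ell}+(V+W_{(\ell)})$ is then essentially self-adjoint on $C^\infty_0(\RR_+)$ with $L_{+,(\ell)}\geq -\|V+W_{(\ell)}\|$ (as $T_{\ell}\geq 0$); and $\sigma_{\mathrm{ess}}(L_{+,(\ell)})=[0,\infty)$ since $V$ and $W_{(\ell)}$ are relatively compact perturbations of $T_{\ell}$.

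\emph{Step (ii).} Next I would set $A:=T_{\ell}+E+\|V\|_{L^\infty}$ and $B:=(\|V\|_{L^\infty}-V)+(-W_{(\ell)})$, so that $L_{+,(\ell)}+E=A-B$. Since $T_{\ell}\geq 0$ we have $\|A^{-1}\|\leq (E+\|V\|_{L^\infty})^{-1}$, while $\|B\|\leq 2\|V\|_{L^\infty}+\|W_{(\ell)}\|$ is independent of $E$, so $\|A^{-1}B\|<1$ for $E$ large. The resolvent $A^{-1}=\int_0^\infty e^{-t(E+\|V\|_{L^\infty})}e^{-tT_{\ell}}\,dt$ is positivity improving, because $e^{-tT_{\ell}}$ is the $\ell$-th angular component of the heat semigroup on $\RR^3$ and has a strictly positive integral kernel (classically given through the positive modified Bessel function $I_{\ell+1/2}$). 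The operator $B$ is positivity preserving, being the sum of the nonnegative multiplication operator $\|V\|_{L^\infty}-V$ and the positivity-improving operator $-W_{(\ell)}$. Hence the norm-convergent Neumann series
\begin{equation*}
(L_{+,(\ell)}+E)^{-1} \;=\; \sum_{k\geq 0}(A^{-1}B)^{k}A^{-1}
\end{equation*}
exhibits $(L_{+,(\ell)}+E)^{-1}$ as a sum of operators each sandwiched between positivity-improving factors $A^{-1}$, hence positivity improving itself.

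\emph{Step (iii) and the obstacle.} Finally, by the abstract Perron--Frobenius theorem for positivity improving self-adjoint operators (as applied to Schr\"odinger operators, e.g.\ in Reed and Simon, \emph{Methods of Modern Mathematical Physics} IV, Theorems XIII.43--XIII.44), if the top of the spectrum of $(L_{+,(\ell)}+E)^{-1}$ is an eigenvalue, then that eigenvalue is simple and its eigenfunction is strictly positive after a choice of phase. Whenever $L_{+,(\ell)}$ has a lowest eigenvalue $e_{0,(\ell)}$, it corresponds to the largest eigenvalue $(e_{0,(\ell)}+E)^{-1}$ of the resolvent, so $e_{0,(\ell)}$ is simple with strictly positive eigenfunction $\phi_{0,(\ell)}(r)>0$, which is the claim. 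The main obstacle is, as flagged, the positivity analysis of step (ii): the crucial input is the negativity of the kernel of $W_{(\ell)}$ (which forces $-W_{(\ell)}$ to be positivity preserving), together with the classical positivity of the heat kernel of $T_{\ell}$, which should be cited with care; the remaining ingredients (the limit-point analysis at the origin, the Hilbert--Schmidt bound, and Kato--Rellich) are routine.
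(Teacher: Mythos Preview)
Your proof is correct and follows essentially the same route as the paper: essential self-adjointness of $T_\ell$ for $\ell\geq 1$ via the limit-point criterion plus Kato--Rellich for the bounded perturbation $V+W_{(\ell)}$, then positivity improvement of the resolvent via a Neumann expansion using the strictly positive Bessel-function heat kernel of $T_\ell$ and the strict negativity of the kernel of $W_{(\ell)}$, and finally Reed--Simon XIII.43. The only cosmetic difference is that your shift by $\|V\|_{L^\infty}$ is unnecessary, since $V=-(|x|^{-1}\ast|\Qinf|^2)<0$ already makes $-V$ a strictly positive multiplication operator; the paper simply takes $A_{(\ell)}=V+W_{(\ell)}$ and observes directly that $-A_{(\ell)}$ is positivity improving.
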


\begin{remarks*} {\em 1) We have indeed the lower bound $L_{+,(\ell)} \geq 0$ for all $\ell \geq 1$. This follows from $\Hell \perp \Qinf$ for $\ell \geq 1$  and the fact that $L_+ \mid_{\Qinf^{\perp}} \geq 0$, which can be proven in the same way as for ground states for local NLS; see, e.\,g., \cite{CGNT2007, Weinstein1985}.

2) It is easy to see that $L_{+,(\ell)}$ has in fact infinitely many eigenvalues between 0 and 1. Indeed, the lower bound $\Qinf(r) \geq B_\varepsilon e^{-(1+\varepsilon)r }$ (cf.~the proof of Lemma \ref{lemma:kerL}) leads, by using Newton's theorem, to  the upper bound $V(r) \leq - \alpha r^{-1}$ with some $\alpha > 0$. Furthermore, one finds that $\langle f, W^{(\ell)} f \rangle < 0$ for $f \not \equiv 0$. Hence, we conclude
\[
L_{+,(\ell)} \leq -\partial_r^2 - \frac{2}{r} \partial_r + 1 + \frac{\ell( \ell + 1)}{r^2} - \frac{\alpha}{r} 
\]
on $L^2(\RR_+, r^2 dr)$. From the well-known spectral properties of the hydrogen atom Hamiltonian, we infer that the operator on the right side has infinitely many eigenvalues below 1, and so does $L_{+,(\ell)}$ by the min-max principle.

}
\end{remarks*}

\begin{proof}
Let us now come to the proof of Lemma \ref{lemma:perron}. Since $\Qinf(r)$ is exponentially decaying, it is straightforward to verify that $W_{(\ell)}$ is a bounded operator. Also, we have that $V \in L^\infty$ holds. Thus $L_{+,(\ell)}$ is bounded below (see also the remark following Lemma \ref{lemma:perron}). Furthermore, it is well-known that 
\begin{equation}
- \Delta_{(\ell)}= -\partial_r^2 - \frac{2}{r} \partial_r + \frac{\ell (\ell+1)}{r^2}
\end{equation}
is essentially self-adjoint on $C^\infty_0(\RR_+)$ provided that $\ell \geq 1$. In fact, this follows from \cite[Theorem X.10 and Example 4] {ReedSimon1975} which shows that $-\partial_r^2 - \frac{2}{r} \partial_r + \frac{\ell (\ell+1)}{r^2}$ is essentially self-adjoint on $C^\infty_0(\RR_+)$ if $\ell (\ell+1)/r^2 \geq 3/4r^2$. Furthermore, by the Kato-Rellich theorem and the fact that $V$ and $W_{(\ell)}$ are bounded and self-adjoint, we deduce that $L_{+,(\ell)}= -\Delta_{(\ell)} + V + W_{(\ell)}$ is essentially self-adjoint on $C^\infty_0(\RR_+)$ as well.

The Perron-Frobenius property of $L_{+,(\ell)}$ can be shown as follows. First, we consider the kinetic energy part in $L_{+,(\ell)}$, where we find that 
\begin{equation} \label{eq:DeltaPos}
\mbox{$e^{t \Delta_{(\ell)}}$ is positivity improving on $L^2(\RR_+, r^2 dr)$ for all $t > 0$.}
\end{equation}
(Recall that, by definition, this means that $e^{t \Delta_{(\ell)}} f > 0$ when $f \geq 0$ with $f \not \equiv 0$.) Indeed, an argument given in Appendix \ref{app:B} shows that the integral kernel of $e^{t \Delta_{(\ell)}}$ is strictly positive:
\begin{equation}
e^{t \Delta_{(\ell)}}(r,s) = \frac{1}{2t} \sqrt{\frac{1}{rs}} e^{- \frac{r^2 + s^2}{4t}} I_{\ell + 1/2}\Big ( \frac{rs}{2t} \Big ) > 0, \quad \mbox{for $r,s > 0$} .
\end{equation}
Here $I_k(z)$ denotes the modified Bessel function of the first kind of order $k$. For later use, we record that (\ref{eq:DeltaPos}) and the formula (by functional calculus)
\begin{equation}
(-\Delta_{(\ell)} + \mu )^{-1} = \int_0^\infty e^{-t \mu } e^{t \Delta_{(\ell)}} \, dt , \quad \mbox{for $\mu > 0$},
\end{equation}
immediately show that
\begin{equation} \label{eq:DeltaRes}
\mbox{$(-\Delta_{(\ell)} + \mu)^{-1}$ is positivity improving on $L^2(\RR_+, r^2 dr)$ for all $\mu > 0$.}
\end{equation}

Next, let $A_{(\ell)}$ denote the bounded self-adjoint operator
\begin{equation}
A_{(\ell)} = V + W_{(\ell)},
\end{equation}
where $V$ and $W_{(\ell)}$ are defined in (\ref{eq:Vell}) and (\ref{eq:Well}), respectively. Note that $A_{(\ell)}$ is nonlocal. Using that $\Qinf(r)$ is strictly positive, we readily find that
\begin{equation} \label{eq:Aell}
\mbox{$-A_{(\ell)}$ is positivity improving on $L^2(\RR_+, r^2 dr)$.}
\end{equation}
This leads to the following auxiliary result.

\begin{lemma} \label{lemma:posimprov}
For $\mu \gg 1$, the resolvent
\[
\big ( L_{+,(\ell)} + \mu \big )^{-1} = \big ( -\Delta_{(\ell)} + A_{(\ell)} + \mu \big )^{-1}
\]
is positivity improving on $L^2(\RR_+, r^2 dr)$.
\end{lemma}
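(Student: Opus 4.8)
The plan is to obtain the resolvent $\big(L_{+,(\ell)}+\mu\big)^{-1}$ as a norm-convergent Neumann series around the free resolvent $\big(-\Delta_{(\ell)}+\mu\big)^{-1}$, and to check that, once the series is grouped in the right way, \emph{every} term is positivity improving. This is possible because the perturbation $A_{(\ell)}=V+W_{(\ell)}$ is \emph{bounded}, while both building blocks $\big(-\Delta_{(\ell)}+\mu\big)^{-1}$ (by (\ref{eq:DeltaRes})) and $-A_{(\ell)}$ (by (\ref{eq:Aell})) are positivity improving on $L^2(\RR_+, r^2\, dr)$.

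In detail, I would argue as follows. Since $A_{(\ell)}$ is bounded and self-adjoint and $-\Delta_{(\ell)}\geq 0$, there is a $\mu_0>0$ so that for every $\mu\geq\mu_0$ the operator $L_{+,(\ell)}+\mu=-\Delta_{(\ell)}+A_{(\ell)}+\mu$ is boundedly invertible (it is bounded below by Lemma \ref{lemma:perron}) and, moreover, $\|A_{(\ell)}\big(-\Delta_{(\ell)}+\mu\big)^{-1}\|_{L^2\to L^2}\leq\|A_{(\ell)}\|_{L^2\to L^2}/\mu<1$. For such $\mu$ the standard resolvent identity yields the operator-norm convergent expansion
\[
\big(L_{+,(\ell)}+\mu\big)^{-1}=\sum_{k=0}^{\infty}\big(-\Delta_{(\ell)}+\mu\big)^{-1}\Big[\big(-A_{(\ell)}\big)\big(-\Delta_{(\ell)}+\mu\big)^{-1}\Big]^{k}.
\]
Crucially, this grouping distributes all the minus signs onto the factors $-A_{(\ell)}$, so each summand is a finite composition of the operators $\big(-\Delta_{(\ell)}+\mu\big)^{-1}$ and $-A_{(\ell)}$ only. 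Since a composition of positivity improving operators is again positivity improving, every summand — and in particular the term $k=0$, which is just $\big(-\Delta_{(\ell)}+\mu\big)^{-1}$ — is positivity improving.

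To conclude, fix $f\in L^2(\RR_+, r^2\, dr)$ with $f\geq 0$ and $f\not\equiv 0$. Every partial sum of the series applied to $f$ is bounded below, pointwise a.e., by the $k=0$ term $\big(-\Delta_{(\ell)}+\mu\big)^{-1}f$, because all terms with $k\geq 1$ are nonnegative; and $\big(-\Delta_{(\ell)}+\mu\big)^{-1}f>0$ a.e.\ by (\ref{eq:DeltaRes}). Operator-norm convergence gives $L^2$-convergence of the partial sums applied to $f$, hence a.e.\ convergence along a subsequence, so this pointwise lower bound passes to the limit and $\big(L_{+,(\ell)}+\mu\big)^{-1}f>0$ a.e., which is exactly the asserted positivity improvement.

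I do not expect a serious obstacle here; the one point that genuinely matters is the sign bookkeeping. The naive Neumann series is alternating, and it is only the regrouping displayed above — which is precisely the statement that $-A_{(\ell)}$ (and not $+A_{(\ell)}$) is the positivity improving object, coming from the strict positivity of $\Qinf$ entering the nonlocal term $W_{(\ell)}$ — that makes every term manifestly nonnegative. The two auxiliary facts used (a composition of positivity improving operators is positivity improving; a norm-limit of operators each dominating a fixed positivity improving operator stays positivity improving) are elementary. A heavier alternative would run a Trotter product formula to show that $e^{-tL_{+,(\ell)}}$ is positivity preserving and then invoke the second resolvent identity, but the Neumann series is the cleanest route. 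Note that the method delivers the statement only for $\mu\gg1$, which is all that is needed.
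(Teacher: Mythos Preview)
Your proof is correct and follows essentially the same approach as the paper: both expand $(L_{+,(\ell)}+\mu)^{-1}$ as the Neumann series $(-\Delta_{(\ell)}+\mu)^{-1}\sum_{k\ge 0}\big[(-A_{(\ell)})(-\Delta_{(\ell)}+\mu)^{-1}\big]^k$ for $\mu\gg 1$, and conclude positivity improvement from the fact that each factor $(-\Delta_{(\ell)}+\mu)^{-1}$ and $-A_{(\ell)}$ is positivity improving. Your write-up is in fact slightly more explicit than the paper's in justifying the passage to the limit via the pointwise lower bound by the $k=0$ term.
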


\begin{proof}
For $\mu \gg 1$, we have
\begin{align*}
\frac{1}{ L_{+, (\ell)} + \mu  } =  \frac{1}{-\Delta_{(\ell)} + \mu} \frac{1}{1 + A_{(\ell)} (-\Delta_{(\ell)} + \mu)^{-1} } .
\end{align*}
Since $A_{(\ell)}$ is bounded, we conclude that $\| A_{(\ell)} (-\Delta_{(\ell)} + \mu )^{-1} \|_{L^2 \to L^2} < 1$ for $\mu \gg 1$. Thus a Neumann expansion yields
\begin{equation} \label{eq:Lres}
\frac{1}{ L_{+, (\ell)} + \mu } = \frac{1}{-\Delta_{(\ell)} + \mu} \sum_{\nu=0}^\infty \big ( - \! A_{(\ell)} (-\Delta_{(\ell)} + \mu )^{-1} \big )^\nu,
\end{equation}
provided that $\mu \gg 1$. Next, we recall from (\ref{eq:DeltaRes}) that $(-\Delta_{(\ell)} + \mu)^{-1}$ is positivity improving. By this fact and $(\ref{eq:Aell})$, we deduce from (\ref{eq:Lres}) that $(L_{+,(\ell)} + \mu)^{-1}$ must be positivity improving for  $\mu \gg 1$. This completes the proof of Lemma \ref{lemma:posimprov}. \end{proof}

We now return to the proof of Lemma \ref{lemma:perron}, which we complete as follows. Let $\ell \geq 1$ be fixed and suppose $e_{0,(\ell)} = \inf \sigma (L_{+,(\ell)})$ is the lowest eigenvalue. Furthermore, we choose $\mu \gg 1$ such that, by Lemma \ref{lemma:posimprov},
\begin{equation}
B = \big ( L_{+,(\ell)} + \mu \big )^{-1}
\end{equation}
is positivity improving on $L^2(\RR_+, r^2 dr)$. Clearly, the operator $B$ is bounded and self-adjoint, and its largest eigenvalue $\lambda_0 = \sup \sigma(B)$ is given by $\lambda_0 = ( e_{(\ell),0} + \mu)^{-1}$. Also, the corresponding eigenspaces of $L_{+,(\ell)}$ and $B$ coincide. Since $B$ is positivity improving (and hence ergodic), we can invoke \cite[Theorem XIII.43]{ReedSimon1978} to conclude that $\lambda_0$ is simple and that the corresponding eigenfunction $\phi_{(\ell),0}(r)$ is strictly positive on $\RR_+$. This proof of Lemma \ref{lemma:perron} is therefore complete.
\end{proof}

Let us now come back to the proof of Proposition \ref{prop:Qinfnondeg}, stating that $\mathrm{ker} \, L_+$ is spanned by $\{ \partial_{x_i} \Qinf \}_{i=1}^3$. By differentiating the nonlinear equation satisfied by $\Qinf$, we readily obtain that $L_+ \partial_{x_i} \Qinf = 0$ for $i=1,2,3$. Since $\partial_{x_i} \Qinf(r) = \Qinf'(r) \frac{x_i}{r} \in \mathcal{H}_{(1)}$, this show that 
\begin{equation}
L_{+,(1)} \Qinf' = 0. 
\end{equation}
Furthermore, by monotonicity of $\Qinf(r)$, we have that $\Qinf'(r) \leq 0$. Since $L_{+, (1)}$ is self-adjoint and $\Qinf'$ is an eigenfunction that does not change its sign, Lemma \ref{lemma:perron} shows that in fact $\Qinf'(r) = -\phi_{0,(1)}(r)$ holds, where $\phi_{0,(1)} > 0$ is the strictly positive ground state of $L_{+,(1)}$, with $e_{0,(1)} = 0$ being its corresponding eigenvalue. Therefore any $\xi \in \mathcal{H}_{(1)}$ such that $L_+ \xi = 0$ must be some linear combination of $\{ \partial_{x_i} \Qinf \}_{i=1}^3$.

To complete the proof of Proposition \ref{prop:Qinfnondeg}, we now claim that
\begin{equation} \label{eq:Lplus}
L_{+,(\ell)} > 0, \quad \mbox{for $\ell \geq 2$} ,
\end{equation}
which in particular shows that $L_+ \xi = 0$ with $\xi \in \Hell$ for some $\ell \geq 2$ implies that $\xi \equiv 0$. To prove (\ref{eq:Lplus}), let $\ell \geq 2$ be fixed and set
\begin{equation}
e_{0,(\ell)} = \inf \sigma(L_{+,(\ell)}). 
\end{equation}
Indeed, by the remark following Lemma \ref{lemma:perron}, we know that $e_{0,(\ell)} < 1$ is attained. (If $e_{0,(\ell)}$ was not attained, then $e_{0,(\ell)} = \inf \sigma_{\mathrm{ess}}(L_{+, (\ell)}) = 1$ and (\ref{eq:Lplus}) follows immediately.) By Lemma \ref{lemma:perron}, the eigenvalue $e_{0,(\ell)}$ is simple and its corresponding eigenfunction $\phi_{0,(\ell)}(r) > 0$ is strictly positive. Next, we notice that
\begin{equation} \label{eq:simple}
e_0 = \langle \phi_{0,(\ell)}, L_{+,(\ell)} \phi_{0,(\ell)} \rangle = \langle \phi_{0,(\ell)}, L_{+,(1)} \phi_{0,(\ell)} \rangle + K_{(\ell)},
\end{equation}
where
\begin{align*}
K_{(\ell)} & = \int_0^\infty \frac{(\ell (\ell + 1) -2)}{r^2} \phi_{0,(\ell)}(r)^2 \, r^2 \, dr \\
& \quad + 8 \pi \int_0^\infty \! \int_0^\infty \Qinf(r) \phi_{0,(\ell)}(r) \Big (\frac{1}{3} \frac{r_<}{r_>^2} - \frac{1}{2 \ell + 1} \frac{r_<^\ell}{r_>^{\ell +1}} \Big ) \Qinf(s) \phi_{0,(\ell)}(s) \, r^2 s^2 \, dr \, ds,
\end{align*}
with $r_< = \min(r,s)$ and $r_> = \max (r,s)$. Using the strict positivity of $\Qinf(r)$ and $\phi_{0,(\ell)}(r)$, we see that $K_{(\ell)} >0$ holds because of $\ell \geq 2$ and $(r_</r_>) \leq 1$. Moreover, we recall from the preceding discussion that $L_{+,(1)} \geq e_{0,(1)} = 0$. Therefore, by (\ref{eq:simple}), 
\begin{equation}
e_{0,(\ell)} \geq K_{(\ell)} > 0 , \quad \mbox{for all $\ell \geq 2$},
\end{equation}
which proves (\ref{eq:Lplus}) and completes the proof of Proposition \ref{prop:Qinfnondeg}, whence the proof of Theorem 4 follows. \hfill $\blacksquare$

\subsection{Proof of Theorem \ref{th:nondeg}}

As in the proof of Theorem \ref{th:unique} above, it is convenient to fix $N >0$ and to consider symmetric-decreasing ground state $\Qc \in \Honer$ minimizing $\Enc(\psi)$ with $\int |\Qc|^2 = N$, where we take $c >0$ sufficiently large.  In what follows, let $\mu_c$ denote the Lagrange multiplier associated to $\Qc$. (Note that it is possible that $\mu_c$ depends on $Q_c$ and not just on $c$.)

Recall from Proposition \ref{prop:nrlimit} that
\begin{equation} \label{eq:convergence}
\| \Qc - \Qinf \|_{H^1} \leq \delta_1 \quad \mbox{and} \quad  |-\mu_c - m c^2 + \lambda| \leq \delta_2 ,
\end{equation} 
where $\delta_1 \to 0$ and $\delta_2 \to 0$ as $c \to \infty$. Here $\Qinf \in \Honer$ is the unique radial positive solution to equation (\ref{eq:lieb}) with $\int |\Qinf|^2 = N$, where $\lambda >0$ is determined through $\Qinf$. By Theorem \ref{th:nondegnr}, the linear operator $L_+$ given by
\begin{equation}
L_+  \xi = - \frac{1}{2m} \Delta + \lambda - \big ( |x|^{-1} \ast |\Qinf|^2 \big ) \xi - 2 \Qinf \big (Ê|x|^{-1} \ast (\Qinf \xi) \big) 
\end{equation}
has the kernel
\begin{equation}
\mathrm{ker} \, L_+ = \mathrm{span} \, \{ \partial_{x_1} \Qinf, \partial_{x_2} \Qinf, \partial_{x_3} \Qinf \}. 
\end{equation}
Next, let $L_{+,c}$ denote the linear operators defined as
\begin{equation}
L_{+,c} \xi =  \sqrt{-c^2 \Delta + m^2 c^4} \, \xi + \mu_c \xi - \big ( |x|^{-1} \ast |\Qc|^2 \big ) \xi - 2 \Qc \big ( |x|^{-1} \ast (\Qc \xi ) \big ) .
\end{equation}
Again, upon differentiating the Euler-Lagrange equation satisfied by $\Qc$, we see that $L_{+,c} \partial_{x_i} \Qc = 0$ for $i=1,2,3$. Hence
\begin{equation} \label{eq:kerinc}
\mathrm{span} \, \{ \partial_{x_1} \Qc, \partial_{x_2} \Qc, \partial_{x_3} \Qc \big \} \subseteq \mathrm{ker} \, L_{+,c} .
\end{equation} 
By the following perturbation argument, we show that in fact equality holds for $c \gg 1$. By standard arguments, we see that $0 \in \sigma(L_+)$ is an isolated eigenvalue. Thus we can construct the Riesz projection $P_0$ onto $\mathrm{ker} \, L_+$ by
\begin{equation}
P_0 = \frac{1}{2 \pi i} \oint_{\Gamma_r} (L_+ - z)^{-1} \, dz,
\end{equation}
where the curve $\Gamma_r$ parametrizes the circle $\{ z \in \mathbb{C} : |z| = r \}$. Here $r > 0$ is chosen sufficiently small such that $0$ is the only eigenvalue of $L_+$ inside $|z| \leq r$. Next, we claim that the projection
\begin{equation} \label{eq:riesz}
P_{0,c} = \frac{1}{2 \pi i} \oint_{\Gamma_r} (L_{+,c} - z)^{-1} \, dz
\end{equation}
 exists for $c \gg 1$ and satisfies
\begin{equation} \label{eq:Pconv}
\| P_{0,c} - P_{0} \|_{L^2 \to L^2} \to 0 \quad \mbox{as} \quad c \to \infty.
\end{equation}
Indeed, by using (\ref{eq:convergence}) and similar arguments as in the proof of Proposition \ref{prop:localunique} (see, e.\,g., the resolvent estimate (\ref{eq:Rescon})), we conclude that
\begin{equation}
\| (L_{+,c} - z)^{-1} \|_{L^2 \to L^2} \leq C \| (L_+ - z)^{-1} \|_{L^2 \to L^2} ,
\end{equation}
for all $c \gg 1$ and $z \in \Gamma_r$, where $C > 0$ is some constant. Furthermore, we have
\begin{equation}
\| (L_{+,c} - z)^{-1} - (L_+ - z)^{-1} \|_{L^2 \to L^2} \to 0 \quad \mbox{as} \quad c \to \infty,
\end{equation} 
for all $z \in \Gamma_r$. This shows that $P_{0,c}$ exists for $c \gg 1$ and that (\ref{eq:Pconv}) holds. Since $\mathrm{rank} \, P_0 = 3$ and the rank of $P_{0,c}$ remains constant for $c \gg 1$, by (\ref{eq:Pconv}), we infer that $P_{0,c}$ has at most 3 eigenvalues (counted with their multiplicity) inside $|z| \leq r$, provided that $c \gg 1$. In particular, we conclude that $\dim \mathrm{ker} \, L_{+,c} \leq 3$ for $c \gg 1$. Therefore equality must hold in (\ref{eq:kerinc}) whenever $c \gg 1$. 

Thus we have found that $L_{+,c}$ has the desired kernel property if $c \gg 1$. By a rescaling argument formulated in Lemma \ref{lemma:equivalence}, we conclude the analogous statement for the linear operator $L_+$ arising from the unique symmetric-decreasing ground state $Q$ minimizing $\En(\psi)$ subject to $\int |\psi|^2 = N$ with $N \ll 1$. The proof of Theorem \ref{th:nondeg} is now complete. \hfill $\blacksquare$

\begin{appendix}

\section{Uniqueness of $\Qinf$}

Suppose that $\Qinf \in H^1(\RR^3)$ solves 
\begin{equation}
-\frac{1}{2m} \Delta \Qinf - \big ( |x|^{-1} \ast |\Qinf |^2 \big ) \Qinf = -\lambda \Qinf,
\end{equation}
with $m > 0$ and $\lambda > 0$ given. By rescaling $\Qinf(r) \mapsto a \Qinf(br)$ with suitable $a > 0$ and $b > 0$, we can consider without loss of generality solutions $\Qinf \in H^1(\RR^3)$ to the ``normalized'' equation
\begin{equation} \label{eq:liebapp}
-\Delta \Qinf - \big (|x|^{-1} \ast |\Qinf|^2 \big ) \Qinf = -\Qinf .
\end{equation}
The following result is due to \cite{Lieb1977}; see also \cite{MorozTod1999}. Here we provide a partly different proof, which is directly based on a comparison argument.

\begin{lemma} \label{lemma:lieb}
The equation (\ref{eq:liebapp}) has a unique radial, nonnegative solution $Q \in \Honer$ with $Q \not \equiv 0$. Moreover, we have that $Q(r)$ is in fact strictly positive.
\end{lemma}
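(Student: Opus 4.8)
The plan is to reduce to the normalized equation~(\ref{eq:liebapp}), collect the qualitative properties of any radial nonnegative nontrivial solution, reinterpret the problem as a Schr\"odinger eigenvalue problem, and then close the argument by an ODE comparison in the spirit of the proof of Lemma~\ref{lemma:kerL}.

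First I would record that any radial nonnegative solution $Q \not\equiv 0$ of (\ref{eq:liebapp}) is smooth (elliptic bootstrap via the Hardy--Littlewood--Sobolev and Sobolev inequalities), strictly positive, and exponentially decaying. The last two properties follow from the observation that $Q$ is the ground state of the Schr\"odinger operator $h_Q := -\Delta - \big(|x|^{-1}\ast |Q|^2\big)$ with eigenvalue $-1$: strict positivity is then the Harnack/strong maximum principle, and exponential decay follows from $-1 < 0 = \inf\sigma_{\mathrm{ess}}(h_Q)$, exactly as in the estimates (\ref{ineq:Qupper})--(\ref{ineq:Qlower}). Next I would show that $Q$ is radially strictly decreasing. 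Writing $W := |x|^{-1}\ast|Q|^2$, Newton's theorem gives $W(r) = \int_r^\infty s^{-2}F(s)\,ds$ with $F(r) := \int_{|y|<r}|Q|^2$, so $W$ is strictly decreasing with $W(\infty)=0$; from the radial form $(r^2 Q')' = r^2(1-W)Q$ one first gets $W(0)>1$ (otherwise $r^2Q'$ would remain nonnegative, forcing $Q$ nondecreasing, contradicting $Q>0$ and $Q\to0$), so $1-W$ has a unique zero $r_*$, and a sign analysis of $r^2Q'$ on $(0,r_*)$ and on $(r_*,\infty)$ gives $Q'(r)<0$ for all $r>0$.

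For uniqueness, let $Q_1,Q_2$ be two radial positive solutions and put $W_i := |x|^{-1}\ast|Q_i|^2$. The first, soft, step is a rigidity statement: if $W_1 \ge W_2$ pointwise, then $Q_1 \equiv Q_2$. Indeed, each $Q_i>0$ is the simple ground state of $-\Delta - W_i$ with bottom of spectrum $-1$; since $-\Delta - W_1 \le -\Delta - W_2$ in the form sense while both operators have the same bottom $-1$, testing the variational characterization of $\inf\sigma(-\Delta-W_1)$ with $Q_2$ forces $\int (W_1-W_2)|Q_2|^2 = 0$, hence $W_1 \equiv W_2$ (as $Q_2>0$), hence $|Q_1|^2 = -\tfrac1{4\pi}\Delta W_1 = -\tfrac1{4\pi}\Delta W_2 = |Q_2|^2$, so $Q_1=Q_2$. (Equivalently, $1$ is the principal eigenvalue of the compact, positivity-improving operator $(-\Delta+1)^{-1}(W_i\,\cdot\,)$ with Perron eigenfunction $Q_i$, and monotonicity of spectral radii together with a test against the adjoint Perron eigenfunction yields the same conclusion.) By symmetry, it therefore suffices to show that $W_1$ and $W_2$ are always pointwise comparable, i.\,e.\ that $Q_1$ and $Q_2$ do not cross.

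To do this I would use the Wronskian-type identity from the proof of Lemma~\ref{lemma:kerL}: from $(r^2Q_i')' = r^2(1-W_i)Q_i$ one obtains $\big(r^2(Q_1Q_2'-Q_1'Q_2)\big)' = r^2 Q_1 Q_2 (W_1-W_2)$, equivalently $r^2 Q_2^2 \,(Q_1/Q_2)'(r) = -\int_0^r s^2 Q_1 Q_2 (W_1-W_2)\,ds$, coupled with Newton's formula $W_i(r) = \int_r^\infty s^{-2}F_i(s)\,ds$, $F_i(r)=\int_{|y|<r}|Q_i|^2$. Assuming without loss of generality $Q_1(0)\ge Q_2(0)$ and supposing there were a first crossing radius, one analyzes the sign of $F_1-F_2$ and of the resulting Wronskian to contradict the relative slopes of $Q_1,Q_2$ imposed at the crossing point; combined with the rigidity step this yields $Q_1\equiv Q_2$. (Alternatively one may compare an arbitrary radial positive solution directly with the known minimizing solution.) Existence of at least one radial positive solution is classical: it is the symmetric-decreasing minimizer of (\ref{eq:Enr}) (rescaled to satisfy (\ref{eq:liebapp})), which is strictly positive by Proposition~\ref{prop:nrlimit}. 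The step I expect to be the main obstacle is precisely the no-crossing claim: the value of $W_i$ at radius $r$ is an integral of $|Q_i|^2$ over all of $\RR^3$, so an ordering of $Q_1,Q_2$ on an interval $[0,\rho)$ does not by itself produce an ordering of $W_1,W_2$ there (the tails $r\ge\rho$ feed back through the convolution), and the usual local ODE comparison/shooting machinery is not available; one must propagate the ordering through the self-consistent coupling of $Q_i$ with $W_i$ with some care, which is where the real work lies.
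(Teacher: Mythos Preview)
Your overall architecture --- a Wronskian comparison to rule out crossings, followed by a Schr\"odinger-operator min-max argument to derive a contradiction from a strict pointwise ordering --- is exactly the paper's strategy, and your rigidity step (``$W_1 \ge W_2$ pointwise forces $Q_1 \equiv Q_2$'') is essentially the paper's final contradiction. What is missing is precisely the step you flag as ``where the real work lies,'' and the paper resolves it by a device you have not found: a \emph{solution-dependent rescaling} that renders the problem local.

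Concretely, Newton's theorem gives $(|x|^{-1}\ast|Q|^2)(r) = C_Q - \int_0^r K(r,s)\,Q(s)^2\,ds$ with the global constant $C_Q = \int_{\RR^3}|Q|^2/|x|$. The rescaling $Q(r)\mapsto a^2 Q(ar)$, with $a>0$ chosen (depending on $Q$) so as to absorb $C_Q$, transforms (\ref{eq:liebapp}) into the Volterra-type problem
\[
-v'' - \tfrac{2}{r}v' - v + \Big(\int_0^r K(r,s)\,v(s)^2\,ds\Big)v = 0, \qquad v(0)=v_0,\quad v'(0)=0,
\]
whose ``potential'' at radius $r$ depends only on $v|_{[0,r]}$. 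Two consequences follow at once: (i) a standard fixed-point argument gives uniqueness for this IVP, so $v_0=\widetilde v_0$ already forces $v\equiv\widetilde v$ (note that in your nonlocal formulation, $Q_1(0)=Q_2(0)$ does \emph{not} obviously imply $Q_1\equiv Q_2$, so even the reduction to $Q_1(0)>Q_2(0)$ is unjustified there); and (ii) if $\widetilde v>v$ on $[0,r_*)$, then the \emph{local} potentials satisfy $\widetilde V(r)>V(r)$ on $(0,r_*)$, and the Wronskian identity yields a clean sign contradiction at the first crossing. By contrast, your formulation keeps the full convolution $W_i$, for which an ordering of $Q_1,Q_2$ on $[0,r_*)$ gives no control on $W_1-W_2$ there (the tails feed back through the convolution, as you correctly note), and your sketch ``one analyzes the sign of $F_1-F_2$ and of the resulting Wronskian'' does not close this gap. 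The missing idea is the rescaling to a local integro-differential equation; once you have it, the no-crossing step is a two-line computation and the remainder of your outline goes through.
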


\begin{proof}
Existence of a nonnegative, nontrivial solution $\Qinf \in \Honer$ of (\ref{eq:liebapp}) follows from variational arguments; see \cite{Lieb1977}.

To prove that that any nonnegative $Q \in H^1(\RR^3)$, with $Q \not \equiv 0$, solving (\ref{eq:liebapp}) is strictly positive, we can simply argue as follows. We rewrite (\ref{eq:liebapp}) as
\begin{equation} \label{eq:Qinv}
Q(x) = ((-\Delta + 1)^{-1} (V Q))(x)= \frac{1}{4 \pi} \int_{\RR^3} \frac{e^{-|x-y|}}{|x-y|} V(y) Q(y) \, dy
\end{equation}
with $V = |x|^{-1} \ast |Q|^2$. Since $V \geq 0$ and $Q \geq 0$ (with $V \not \equiv 0$ and $Q \not \equiv 0$), equation (\ref{eq:Qinv}) shows that $Q$ is strictly positive.

Let us now prove the claimed uniqueness. Suppose $Q \in \Honer$, with $Q \not \equiv 0$, is a solution to (\ref{eq:liebapp}). Using Newton's theorem, we find that $Q(r)$ solves (after a suitable rescaling $Q(r) \mapsto a^2 Q(ar)$ for some $a > 0$; see \cite{Lieb1977}) the following initial-value problem
\begin{equation} \label{eq:ivp}
\left \{ \begin{array}{ll} \displaystyle -v''(r) - \frac{2}{r} v'(r) - v(r) + \big( \int_0^r K(r,s) v(s)^2 \, ds \big ) v(r) = 0, \\[1ex]
v(0) = v_0, \quad v'(0) = 0 ,
\end{array} \right .
\end{equation}
with $v_0 = Q(0) \in \RR$. (Recall that $K(r,s) \geq 0$ is given by (\ref{eq:K}) above.) By standard fixed point arguments, we deduce that (\ref{eq:ivp}) has a unique local $C^2$-solution for given initial data $v(0) \in \RR$ and $v'(0)=0$, and $v(r)$ exists up to some maximal radius $R \in (0,\infty]$.

Suppose now that $Q \in \Honer$ and $\widetilde{Q} \in \Honer$ are two radial, nonnegative (and nontrivial) solutions to (\ref{eq:liebapp}) with $Q \not \equiv \widetilde{Q}$. From the preceding discussion we know that $Q$ and $\widetilde{Q}$ are in fact strictly positive, and (after appropriate rescaling) both satisfy (\ref{eq:ivp}) with $v_0 = Q(0) > 0$ and $v_0 = \widetilde{Q}(0) > 0$, respectively. By  uniqueness for (\ref{eq:ivp}), we conclude that $Q(0) \neq \widetilde{Q}(0)$ holds, since otherwise $Q \equiv \widetilde{Q}$. Therefore, we can henceforth assume that 
\begin{equation} \label{eq:Qinit}
\widetilde{Q}(0) > Q(0) .
\end{equation}

Next, we notice that a calculation (similar to the one in the proof of Lemma \ref{lemma:kerL}) yields the integrated ``Wronskian-type'' identity
\begin{equation} \label{eq:wronsk}
r^2 ( Q(r) \widetilde{Q}'(r) - Q'(r) \widetilde{Q}(r) )  = \int_0^r s^2 Q(s) \widetilde{Q}(s) ( \widetilde{V}(s) - V(s) ) \, ds .
\end{equation}
Here,
\begin{equation}
V(r) = \int_0^r K(r,s) Q(s)^2 \,ds \quad \mbox{and} \quad \widetilde{V}(r) = \int_0^r K(r,s) \widetilde{Q}(s)^2 \, ds.
\end{equation}
By continuity and (\ref{eq:Qinit}), we have $\widetilde{Q}(r) > Q(r)$ at least initially for $r \geq 0$. Next, we conclude, by (\ref{eq:wronsk}), that in fact
\begin{equation} \label{eq:comparison}
\widetilde{Q}(r) > Q(r),  \quad \mbox{for all $r \geq 0$}.
\end{equation}
To see this, suppose on the contrary that $\widetilde{Q}(r) > 0$ intersects $Q(r) > 0$ for the first time at $r=r_* > 0$, say. Then the left-hand side of (\ref{eq:wronsk}) is found to be nonnegative at $r=r_*$, whereas the right-hand side must be strictly positive at $r=r_*$ since $\widetilde{V}(r) > V(r)$ on $(0,r_*)$. This contradiction shows that (\ref{eq:comparison}) holds.

Finally, we show that (\ref{eq:comparison}) leads to a contradiction (along the lines of \cite{Lieb1977}) as follows. To this end, we consider the Schr\"odinger operators
\begin{equation}
H = -\Delta + V \quad \mbox{and} \quad \widetilde{H} = -\Delta + \widetilde{V} ,
\end{equation}
so that $H Q = Q$ and $\widetilde{H} \widetilde{Q} = \widetilde{Q}$. By standard theory of Schr\"odinger operators, we conclude that $Q$ and $\widetilde{Q}$ are (up to  a normalization factor) the unique positive ground states (with eigenvalue $e=1$) for $H$ and $\widetilde{H}$, respectively. Therefore,
\begin{equation} \label{eq:HR}
\langle \phi, H \phi \rangle \geq \| \phi \|_{L^2}^2 \quad \mbox{and} \quad \langle \phi, \widetilde{H} \phi \rangle \geq \| \phi \|_{L^2}^2 , \quad \mbox{for $\phi \in H^1(\RR^3)$,}
\end{equation}
where equality holds if and only if $\phi = \lambda Q$ or $\phi = \lambda \widetilde{Q}$ for some $\lambda \in \CC$, respectively. 

Going back to (\ref{eq:comparison}), we find that $\widetilde{V}(r) > V(r)$ for all $r > 0$, which leads to
\begin{align*}
\| \widetilde{Q} \|_{L^2}^2 & \leq \langle \widetilde{Q}, H \widetilde{Q} \rangle = \langle \widetilde{Q}, \widetilde{H} \widetilde{Q} \rangle - \langle \widetilde{Q}, (\widetilde{V}-V) \widetilde{Q} \rangle = \| \widetilde{Q} \|_{L^2}^2 - \delta,
\end{align*}
for some $\delta > 0$, which is a contradiction. 

Hence equation (\ref{eq:liebapp}) does not admit two different radial and nonnegative (and nontrivial) solutions $Q \in \Honer$ and $\widetilde{Q} \in \Honer$. \end{proof}

\section{Decomposition of $e^{t \Delta}$ using Spherical Harmonics} \label{app:B}

Recall the explicit formula for the heat kernel of the Laplacian $\Delta$ on $\RR^3$:
\begin{equation} \label{eq:heat}
e^{t \Delta}(x,y) = \frac{1}{(4 \pi t)^{3/2}} e^{- \frac{|x-y|^2}{4t}} = \frac{1}{(4 \pi t)^{3/2}} e^{-\frac{x^2 + y^2}{4t}} e^{\frac{ x\cdot y}{2t}} .
\end{equation}
Moreover, we have the well-known identity
\begin{equation} \label{eq:bessel}
e^{a x \cdot y} = 4 \pi \sum_{\ell=0}^\infty \sum_{m=-\ell}^{+\ell}  i_\ell(a |x| |y|) Y_{\ell m}(\Omega) Y_{\ell m}^*(\Omega')
\end{equation}
for $a > 0$, $x = |x| \Omega$ and $y = |y| \Omega'$ where $\Omega, \Omega' \in \mathbb{S}^2$. Here 
\begin{equation}
i_\ell(z) = \sqrt{ \frac{\pi}{2 z}} I_{\ell+1/2}(z)
\end{equation}
is the modified spherical Bessel function of the first kind of order $\ell$; whereas $I_k(z)$ denotes the modified Bessel function of the first kind of order $k$. 

Let $\Delta_{(\ell)}$ denote the restriction of $\Delta$ acting on $\Hell$ (i.\,e., the space of $L^2(\RR^3)$ functions whose ``angular momentum'' is $\ell \geq 0$). From (\ref{eq:heat}) and (\ref{eq:bessel}) we deduce that the integral kernel of $e^{t \Delta_{(\ell)}}$ acting on $L^2(\RR_+, r^2 dr)$ is given by
\begin{equation} \label{eq:eheatl}
e^{t \Delta_{(\ell)}}(r,s) =  \frac{1}{2t} \sqrt{\frac{1}{rs}} e^{- \frac{r^2 + s^2}{4t}} I_{\ell + 1/2}\Big ( \frac{rs}{2t} \Big ) .
\end{equation}
An explicit integral representation for $I_{k}(z)$ shows that $I_{\ell + 1/2}(z) > 0$ for all $z > 0$ and $\ell \geq 0$. 

\end{appendix}

\bibliography{Unique}
\bibliographystyle{amsalpha}

\end{document}